\newtheorem{theorem}{Theorem}[section]
\numberwithin{equation}{section}
\newtheorem{definition}[theorem]{Definition}
\newtheorem{remark}[theorem]{Remark}
\newtheorem{lemma}[theorem]{Lemma}
\titleformat{\section}{\normalfont\scshape\centering}{\thesection.}{0.5em}{}
\titleformat*{\subsection}{\itshape}
\titleformat*{\subsubsection}{\itshape}
\providecommand{\keywords}[1]
{
	{\small\emph{Keywords:} #1}
}
\providecommand{\MSC}[1]
{
	{\small\emph{AMS MSC (2020):~~} #1}
}
\definecolor{denim}{rgb}{0.08, 0.38, 0.74}
\definecolor{byzantium}{rgb}{0.44, 0.16, 0.39} 
\definecolor{shamrockgreen}{rgb}{0.0, 0.62, 0.38} 
\begin{document}
	\setlength{\abovedisplayskip}{5.5pt}
	\setlength{\belowdisplayskip}{5.5pt}
	\setlength{\abovedisplayshortskip}{5.5pt}
	\setlength{\belowdisplayshortskip}{5.5pt}

	\title{\vspace{-15mm}Modeling and Analysis of an Optimal Insulation Problem on Non-Smooth Domains\thanks{This work is partially supported by the Office of Naval Research (ONR) under Award NO: N00014-24-1-2147, NSF grant DMS-2408877, the Air Force Office of Scientific Research (AFOSR) under Award NO: FA9550-22-1-0248.}\vspace{-1.5mm}}
	\author[1]{Harbir Antil\thanks{Email: \url{hantil@gmu.edu}}}
	\author[2]{Alex Kaltenbach\thanks{Email: \url{kaltenbach@math.tu-berlin.de}}}
	\author[3]{Keegan L.\ A.\ Kirk\thanks{Email: \url{kkirk6@gmu.edu}}\vspace{-1.5mm}}
	\date{March 18, 2025\vspace{-2.5mm}} 
	\affil[1,3]{\small{Department of Mathematical Sciences and the Center for Mathematics and Artificial Intelligence (CMAI), George Mason University, Fairfax, VA 22030, USA.}}
	\affil[2]{\small{Institute of Mathematics, Technical University of Berlin, Stra\ss e des 17.\ Juni 135, 10623 Berlin}\vspace{-2.5mm}}
	\maketitle

	\pagestyle{fancy}
	\fancyhf{}
	\fancyheadoffset{0cm}
	\addtolength{\headheight}{-0.25cm}
	\renewcommand{\headrulewidth}{0pt} 
	\renewcommand{\footrulewidth}{0pt}
	\fancyhead[CO]{{Modelling of an optimal insulation problem}}
	\fancyhead[CE]{{H. Antil, A. Kaltenbach, and K. Kirk}}
	\fancyhead[R]{\thepage}
	\fancyfoot[R]{}
	
	\begin{abstract}
        In this paper, we study an insulation problem that seeks the optimal distribution of a fixed amount $m>0$ of insulating material coating~an~\mbox{insulated}~boundary  $\Gamma_I\subseteq \partial\Omega$ of a thermally conducting body $\Omega\subseteq \mathbb{R}^d$, $d\in \mathbb{N}$. 
        The thickness of the thin insulating layer ${\Sigma_{I}^{\varepsilon}\subseteq \mathbb{R}^d}$ is given locally~via~$\varepsilon \mathtt{d}$, where $\mathtt{d}\colon \Gamma_{I}\to [0,+\infty)$  specifies~the~(to~be~\mbox{determined}) distribution of the  insulating material. We establish $\Gamma(L^2(\mathbb{R}^d))$-convergence of the problem (as $\varepsilon\to 0^+$). Different from the existing literature, which predominantly assumes~that~the thermally conducting body $\Omega$ has a $C^{1,1}$-boundary, we merely assume that $\Gamma_I$ is piece-wise~flat.
To over-come this lack of boundary regularity, we define the thin insulating~layer~$\Sigma_{I}^{\varepsilon}$ using a Lipschitz continuous {(globally)} transversal vector field rather than~the~outward~unit~normal~vector~field. The piece-wise flatness condition on $\Gamma_I$ is only needed to prove the~\mbox{$\liminf$-estimate}.~In~fact, for the $\limsup$-estimate is enough that the  thermally conducting body $\Omega$ has a $C^{0,1}$-boundary.
	\end{abstract}
	
	\keywords{\hspace{-0.1mm}optimal \hspace{-0.1mm}insulation; \hspace{-0.1mm}Lipschitz \hspace{-0.1mm}domain; \hspace{-0.1mm}transversal \hspace{-0.1mm}vector \hspace{-0.1mm}field; \hspace{-0.1mm}boundary~\hspace{-0.1mm}layer;~\hspace{-0.1mm}\mbox{$\Gamma$-convergence}\vspace{-4.5mm}}
	
	\MSC{35B40; 35J25; 35Q93; 49J45; 80A19}
	
	\section{Introduction}\thispagestyle{empty}\vspace{-1mm}\enlargethispage{17.5mm}

    \hspace{5mm}In the present paper, we consider the problem of determining the \emph{`best'} distribution of a given amount~of~an insulating material attached to parts of a thermally conducting body~${\Omega\hspace{-0.1em}\subseteq\hspace{-0.1em} \mathbb{R}^d}$,~${d\hspace{-0.1em}\in\hspace{-0.1em} \mathbb{N}}$, following \hspace{-0.1mm}a \hspace{-0.1mm}physical \hspace{-0.1mm}setting \hspace{-0.1mm}inspired \hspace{-0.1mm}by \hspace{-0.1mm}the \hspace{-0.1mm}work \hspace{-0.1mm}on \hspace{-0.1mm}the \hspace{-0.1mm}\emph{`thin \hspace{-0.1mm}insulation'} \hspace{-0.1mm}case~\hspace{-0.1mm}by~\hspace{-0.1mm}\mbox{{Buttazzo}}~\hspace{-0.1mm}(\textit{cf}.~\hspace{-0.1mm}\cite{Buttazzo1988}):
    given a bounded domain $\Omega\subseteq \mathbb{R}^d$, $d\in \mathbb{N}$, representing the \textit{thermally conducting body},~a~\textit{heat~source density} $f\in L^2(\Omega)$, a \textit{thin {insulating} layer} $\Sigma_\varepsilon\subseteq \mathbb{R}^d$ (\textit{i.e.}, 
 $\partial\Omega\subseteq \partial \Sigma_\varepsilon$)~of~\mbox{thickness}~$\varepsilon \mathtt{d}$, where $\varepsilon>0$ \hspace{-0.1mm}is \hspace{-0.1mm}small \hspace{-0.1mm}and \hspace{-0.1mm}$\mathtt{d}\colon \hspace{-0.175em}\partial\Omega\hspace{-0.175em}\to \hspace{-0.175em}[0,+\infty)$ \hspace{-0.1mm}a \hspace{-0.1mm}(to \hspace{-0.1mm}be \hspace{-0.1mm}determined)  \hspace{-0.1mm}\textit{distribution \hspace{-0.1mm}function}. \hspace{-0.1mm}For~\hspace{-0.1mm}${\Omega_\varepsilon\hspace{-0.175em}\coloneqq \hspace{-0.175em}\Omega\hspace{-0.15em}\cup \hspace{-0.175em}\Sigma_\varepsilon}$, we are interested in the 
    {\textit{heat loss}} functional $\smash{E}_\varepsilon^\mathtt{d}\colon \hspace{-0.05em}H^1_0(\Omega_\varepsilon)\hspace{-0.05em}\to\hspace{-0.05em} \mathbb{R}$, for~every~$v_\varepsilon\hspace{-0.05em}\in \hspace{-0.05em}H^1_0(\Omega_\varepsilon)$ defined by
    \begin{align}\label{intro:E_vareps_h}
        \smash{E}_\varepsilon^{\mathtt{d}}(v_\varepsilon)\coloneqq \tfrac{1}{2}\|\nabla v_\varepsilon\|_{\Omega}^2+\tfrac{\varepsilon}{2}\|\nabla v_\varepsilon\|_{\Sigma_\varepsilon}^2-(f,v_\varepsilon)_{\Omega}\,.
    \end{align} 
    Since the functional \eqref{intro:E_vareps_h}  is proper, strictly convex, weakly coercive, and lower semi-continuous,~the direct method in the calculus of variations yields the existence of a unique~minimizer~${u_\varepsilon^{\mathtt{d}}\in H^1_0(\Omega_\varepsilon)}$, which formally satisfies the following Euler--Lagrange equations
    \begin{align}\label{intro:ELE_Eepsh}
        \begin{aligned}
            -\Delta u_\varepsilon^{\mathtt{d}}&=f&&\quad \text{ a.e.\ in }\Omega\,,\\[-0.5mm]
            -\varepsilon\Delta u_\varepsilon^{\mathtt{d}}&=0&&\quad \text{ a.e.\ in }\Sigma_\varepsilon\,,\\[-0.5mm]
            u_\varepsilon^{\mathtt{d}}&=0&&\quad \text{ a.e.\ on }\Gamma_\varepsilon\,,\\[-0.5mm]
            \nabla (u_\varepsilon^{\mathtt{d}})^+\cdot n&=\varepsilon\nabla (u_\varepsilon^{\mathtt{d}})^-\cdot n&&\quad \text{ a.e.\ on } \partial\Omega\,, 
        \end{aligned}
    \end{align}
    where $n\colon \partial\Omega\to \mathbb{S}^{d-1}$ denotes the outward unit normal vector field to $\partial\Omega$. Moreover,  the boundary condition \eqref{intro:ELE_Eepsh}$_4$  represents a transmission condition across the boundary $\partial\Omega$,~where~$(u_\varepsilon^{\mathtt{d}})^-$~and~$(u_\varepsilon^{\mathtt{d}})^+$ denote the
    traces of $u_\varepsilon^{\mathtt{d}}$ with respect to $\Omega$ and $\Sigma_\varepsilon$, respectively.
   Physically, \eqref{intro:ELE_Eepsh}$_4$ enforces continuity of the heat flux across the interface between conducting body and insulating material, which stems from the conservation of energy, \textit{i.e.}, energy does not accumulate at the interface.\pagebreak

    In the case $\partial\Omega\in C^{1,1}$, which is equivalent to that $n\in \smash{(C^{0,1}(\partial\Omega))^d}$ (\textit{cf}.\ Remark \ref{rem:transversality}(ii)), and $\mathtt{d}\in  C^{0,1}(\partial\Omega)$ with $\mathtt{d}\ge \smash{\mathtt{d}_{\textup{min}}}$ a.e.\ on $\partial\Omega$, where $\smash{\mathtt{d}_{\textup{min}}}>0$, defining the thin {insulating}~layer~via
    \begin{align}\label{intro:sigma_eps}
        \Sigma_\varepsilon\coloneqq \smash{\big\{s+ tn(s)\mid s\in \partial\Omega\,,\; t\in [0,\varepsilon \mathtt{d}(s))\big\}}\,,
    \end{align}
    {Acerbi} and {Buttazzo} (\textit{cf}.\ \cite[Thm. II.2]{AcerbiButtazzo1986}) proved that the limit functional (as $\varepsilon\to 0^{+}$)~of~\eqref{intro:E_vareps_h} (in the sense of $\Gamma(L^2(\mathbb{R}^d))$-convergence) is given via 
    $\smash{E}^\mathtt{d}\colon H^1(\Omega)\to \mathbb{R}$, for every $v \in H^1(\Omega)$~defined~by
    \begin{align}\label{intro:E_h}
        \smash{E}^{\mathtt{d}}(v)\coloneqq \tfrac{1}{2}\|\nabla v\|_{\Omega}^2+\tfrac{1}{2}\|\smash{\mathtt{d}^{-\smash{\frac{1}{2}}}}v\|_{\partial\Omega}^2-(f,v)_{\Omega}\,.
    \end{align}
    In the functional \eqref{intro:E_h}, the first term is the internal energy of the thermally conducting~body~$\Omega$~and the \hspace{-0.1mm}third \hspace{-0.1mm}the \hspace{-0.1mm}contribution \hspace{-0.1mm}by \hspace{-0.1mm}the \hspace{-0.1mm}heat~\hspace{-0.1mm}source~\hspace{-0.1mm}density~\hspace{-0.1mm}$f$.
    \hspace{-0.25mm}The \hspace{-0.1mm}second~\hspace{-0.1mm}is~\hspace{-0.1mm}the~\hspace{-0.1mm}\mbox{\textit{`interface'}}~\hspace{-0.1mm}energy,~\hspace{-0.1mm}accoun\-ting \hspace{-0.15mm}for \hspace{-0.15mm}the \hspace{-0.15mm}interaction \hspace{-0.15mm}of \hspace{-0.15mm}the \hspace{-0.15mm}system \hspace{-0.15mm}at \hspace{-0.15mm}$\partial\Omega$ \hspace{-0.15mm}with \hspace{-0.15mm}the \hspace{-0.15mm}exterior, \hspace{-0.15mm}mediated \hspace{-0.15mm}by~\hspace{-0.15mm}the~\hspace{-0.15mm}\mbox{distribution}~\hspace{-0.15mm}\mbox{function}~\hspace{-0.15mm}$\mathtt{d}$.
    We \hspace{-0.1mm}refer \hspace{-0.1mm}to \cite{BrezisCaffarelliFriedman1980,CaffarelliFriedman1980,AcerbiButtazzo1986b,ButtazzoKohn1987,ButtazzoDalMasoMosco1989,BoutkridaMossinoMoussa1999,BoutkridaGrenonMossinoMoussa2002,MossinoVanninathan2002,PietraNitschScalaTrombetti2021,AcamporaCristoforoniNitschTrombetti2024} \hspace{-0.1mm}for \hspace{-0.1mm}related \hspace{-0.1mm}asymptotic \hspace{-0.1mm}studies \hspace{-0.1mm}and  \cite{Buttazzo1988c,Buttazzo1988,ButtazzoZeine1997,EspositoRiey2003,Buttazzo2017,HuangLiLi2022} \hspace{-0.1mm}for \hspace{-0.1mm}related \hspace{-0.1mm}analytical \hspace{-0.1mm}studies.
    \hspace{-0.1mm}Since \hspace{-0.1mm}the \hspace{-0.1mm}functional \hspace{-0.1mm}\eqref{intro:E_h} \hspace{-0.1mm}is \hspace{-0.1mm}proper, \hspace{-0.1mm}strictly \hspace{-0.1mm}convex, \hspace{-0.1mm}weakly~\hspace{-0.1mm}\mbox{coercive}, and lower semi-continuous, the direct method in the calculus of variations yields the existence~of~a unique minimizer $ u^{\mathtt{d}}\in H^1(\Omega)$,~which~formally~satisfies~the~\mbox{Euler--Lagrange}~\mbox{equations}\vspace{-0.5mm}
\begin{align}\label{intro:ELE_Eh}
        \begin{aligned}
            -\Delta u^{\mathtt{d}}&=f&&\quad \text{ a.e.\ in }\Omega\,,\\[-0.5mm]
            \mathtt{d}\nabla u^{\mathtt{d}}\cdot n+u^{\mathtt{d}}&=0&&\quad \text{ a.e.\ on }\partial\Omega\,.
        \end{aligned}
    \end{align}
    In \cite{AcerbiButtazzo1986}, the assumption $n\in \smash{(C^{0,1}(\partial\Omega))^d}$ guaranteed the existence of some $\varepsilon_0>0$~such that for every $\varepsilon\in (0,\varepsilon_0)$, the mapping ${\Phi_\varepsilon \colon D_\varepsilon\coloneqq\bigcup_{s\in \partial\Omega}{\{s\}\times [0,\varepsilon \mathtt{d}(s))}\to \Sigma_\varepsilon}$, for every $(s,t)^\top\in D_\varepsilon$~defined~by 
    \begin{align}\label{intro:phi_eps}
        \Phi_\varepsilon(s,t)\coloneqq s+ tn(s)\,,
    \end{align}
    is \hspace{-0.15mm}bi-Lipschitz \hspace{-0.15mm}continuous \hspace{-0.15mm}(\textit{i.e.}, \hspace{-0.15mm}Lipschitz \hspace{-0.15mm}continuous \hspace{-0.15mm}and \hspace{-0.15mm}bijective \hspace{-0.15mm}with \hspace{-0.15mm}Lipschitz~\hspace{-0.15mm}\mbox{continuous}~\hspace{-0.15mm}\mbox{inverse}). The latter avoids gaps (\textit{i.e.}, no insulating material is attached) or self-intersections (\textit{i.e.}, insulating material is attached twice) in the insulating boundary layer $\Sigma_\varepsilon$ (\textit{cf}.\ Figure \ref{fig:transversality}).
In applications, however,  the regularity assumption  $\partial\Omega\in C^{1,1}$ (or $n\in (C^{0,1}(\partial\Omega))^d$, respectively) is certainly too restrictive as many thermally conducting bodies in real-world applications exhibit kinks~and~edges.\linebreak For \hspace{-0.1mm}this \hspace{-0.1mm}reason,  \hspace{-0.1mm}we \hspace{-0.1mm}propose \hspace{-0.1mm}a \hspace{-0.1mm}generalization~\hspace{-0.1mm}of~\hspace{-0.1mm}the  \hspace{-0.1mm}procedure \hspace{-0.1mm}above \hspace{-0.1mm}to 
\hspace{-0.1mm}bounded~\hspace{-0.1mm}Lipschitz~\hspace{-0.1mm}\mbox{domains}.  This allows to determine the optimal distribution of the insulating material also at kinks~and~edges (\textit{cf}.\ Figure \ref{fig:transversality} or Figure \ref{fig:thickness}). More precisely,
    our central objective is to generalize the $\Gamma(L^2(\mathbb{R}^d))$-convergence \hspace{-0.1mm}result \hspace{-0.1mm}of \hspace{-0.1mm}{Acerbi} \hspace{-0.1mm}and \hspace{-0.1mm}{Buttazzo} \hspace{-0.1mm}(\textit{cf}.\ \hspace{-0.1mm}\cite{AcerbiButtazzo1986}) \hspace{-0.1mm}to \hspace{-0.1mm}bounded \hspace{-0.1mm}Lipschitz \hspace{-0.1mm}domains~\hspace{-0.1mm}with~\hspace{-0.1mm}\mbox{piece-wise} flat boundary, which is of interest in many real-world applications~and~in~numerical~simulations. If $\partial\Omega\in C^{0,1}$, we only have that $n\hspace{-0.1em}\in\hspace{-0.1em} (L^\infty(\partial\Omega))^d$ and, thus,~the~mapping~\eqref{intro:phi_eps}~is~no~longer~\mbox{bijective}. {As a consequence}, gaps or self-intersections in the thin {insulating}  layer \eqref{intro:sigma_eps}~are~not~\mbox{excluded}.\linebreak In order to avoid the latter, in the thin {insulating} layer \eqref{intro:sigma_eps}, we replace the outward~unit~\mbox{normal} vector field $n\in (L^\infty(\partial\Omega))^d$ with a Lipschitz continuous (globally) transversal~vector~field $k\in (C^{0,1}(\partial\Omega))^d$  of unit-length, \textit{i.e.}, for some $\kappa\in (0,1]$ (the \textit{transversality constant}),~we~have~that
    \begin{align*} 
            k \cdot n \ge \kappa \quad \text{ a.e.\ on }\partial\Omega\,.
    \end{align*} 
    In this case, we define the thin {insulating} layer via\vspace{-0.5mm}\enlargethispage{11mm}
    \begin{align} \label{intro:sigma_eps_new}
        \smash{\Sigma_\varepsilon\coloneqq \big\{s+ tk(s)\mid s\in \partial\Omega\,,\; t\in [0,\varepsilon \mathtt{d}(s))\big\}\,,}
    \end{align}
    which, then, enables to generalize the $\smash{\Gamma(L^2(\mathbb{R}^d))}$-convergence result of {Acerbi} and {Buttazzo} (\textit{cf}.\ \cite{AcerbiButtazzo1986}) to bounded Lipschitz domains with piece-wise flat boundary. We emphasize that the piece-wise flatness condition on the boundary is only needed to prove the $\liminf$-estimate.~In~addition, we are interested in the case, where only a part of the boundary is insulated, \textit{i.e.}, we replace $\partial\Omega$ by $\Gamma_I \subset \partial\Omega$. On the other parts, we specify either Dirichlet or Neumann~boundary~conditions.

    \textit{This paper is organized as follows:} In Sec.\ \ref{sec:preliminaries}, we introduce the relevant notation. In addition, we recall the most important definitions and results about transversal vector field needed for the forthcoming analysis. In Sec.\ \ref{sec:modelling}, resorting to the $\Gamma$-convergence result established~in~Sec.~\ref{sec:gamma_convergence}, we perform a model reduction leading to a non-local and non-smooth~convex~minimization~problem, whose minimization enables to compute (via an explicit formula) the optimal distribution~of~the insulating material. In Sec.\ \ref{sec:tools}, we prove several auxiliary technical tools needed to establish the main result of the paper, \textit{i.e.}, the $\Gamma$-convergence result, in Sec.\ \ref{sec:gamma_convergence}.    
    \pagebreak
	\section{Preliminaries}\label{sec:preliminaries}\vspace{-1mm}\enlargethispage{15.5mm}

    \subsection{Assumptions on the thermally conducting body and {insulated} boundary}\vspace{-1mm}

    \hspace{5mm}In what follows, if not otherwise specified, we assume that the thermally conducting body $\Omega\subseteq \mathbb{R}^d$, $d\in \mathbb{N}$, is a bounded Lipschitz domain with a (topological) boundary $\partial\Omega$ that~is~split~into three (relatively) open boundary parts: insulated boundary $\Gamma_I\subseteq \partial\Omega$, Dirichlet boundary~${\Gamma_D\subseteq \partial\Omega}$, and Neumann boundary $\Gamma_N\subseteq \partial\Omega$. More precisely,~we~have~that $\partial\Omega=\overline{\Gamma}_I\cup \overline{\Gamma}_D\cup \overline{\Gamma}_N$ (\textit{cf}.\ Figure~\ref{fig:domain}). In this connection, we always assume that $\Gamma_I\neq \emptyset$.\vspace{-1mm}

     \begin{figure}[H]
         \centering  
         
\tikzset{every picture/.style={line width=0.75pt}} 


         \caption{\hspace{-0.15mm}A \hspace{-0.15mm}nonsmooth \hspace{-0.15mm}thermally \hspace{-0.15mm}conducting \hspace{-0.15mm}body \hspace{-0.15mm}$\Omega$ \hspace{-0.15mm}with \hspace{-0.15mm}attached \hspace{-0.15mm}insulating \hspace{-0.15mm}layer \hspace{-0.15mm}$\Sigma_{I}^{\varepsilon}$~\hspace{-0.15mm}is~\hspace{-0.15mm}\mbox{depicted}: \textit{left:} $\Gamma_I=\partial\Omega$, \textit{i.e.}, the insulating material is attached to the whole (topological) boundary $\partial\Omega$; \textit{right:} $\Gamma_I\neq \partial\Omega$, \textit{i.e.}, the insulating material is only  attached to the insulated boundary $\Gamma_I$.}\vspace{-3.5mm}
         \label{fig:domain}
     \end{figure}
    
    \subsection{Notation}\vspace{-1mm}
    
    \hspace{5mm}Let $\omega\hspace{-0.1em}\subseteq \hspace{-0.1em}\mathbb{R}^n$, $n\hspace{-0.1em}\in \hspace{-0.1em}\mathbb{N}$, be a (Lebesgue) measurable set. Then, for (Lebesgue)~measurable~functions or vector fields $v,w\colon \omega\to \mathbb{R}^{\ell}$, {$\ell\in\{1,d\}$}, respectively, we employ the inner product\vspace{-0.5mm}
    \begin{align*}
        (v,w)_{\omega}\coloneqq \int_{\omega}{v\odot w\,\mathrm{d}x}\,,
    \end{align*} 
	whenever the right-hand side is well-defined, where $\smash{\odot\colon \mathbb{R}^{\ell}\times \mathbb{R}^{\ell}\to \mathbb{R}}$ either denotes~scalar~multiplication or the Euclidean inner product. If $\vert \omega\vert\in (0,+\infty)$\footnote{For a (Lebesgue) measurable set $\omega\subseteq \mathbb{R}^d$, $d\in \mathbb{N}$, we denote by $\vert \omega\vert $ its $d$-dimensional Lebesgue measure. For a $(d-1)$-dimensional submanifold $\omega\subseteq \mathbb{R}^d$, $d\in \mathbb{N}$, we denote by $\vert \omega\vert $ its $(d-1)$-dimensional~Hausdorff~measure.}, the integral mean 
    of an integrable function or vector field $v\colon \omega\to \mathbb{R}^{\ell}$, {$\ell\in\{1,d\}$}, respectively, is defined by\vspace{-0.5mm} 
    \begin{align*}
        \langle v\rangle_\omega\coloneqq  \fint_{\omega}{v\,\mathrm{d}x}\coloneqq \frac{1}{\vert \omega\vert}\int_{\omega}{v\,\mathrm{d}x}\,.
    \end{align*}
    For $p\in [1,+\infty]$, we employ standard notation for Lebesgue $\smash{L^p(\omega)}$ and Sobolev $\smash{H^{1,p}(\omega)}$ spaces, where $\omega$ should be open for  Sobolev spaces. The closure of $C_c^{\infty}(\omega)$ in $\mathtt{d}^{1,p}(\omega)$~is~\mbox{denoted}~by~$\mathtt{d}^{1,p}_0(\omega)$. The $L^p(\omega)$-norm is defined by\vspace{-1mm}
    \begin{align*}
        \|\cdot\|_{p,\omega}\coloneqq\begin{cases}
             (\int_\omega{\vert \cdot\vert^p\,\mathrm{d}x})^{\smash{\frac{1}{p}}}&\text{ if }p\in [1,+\infty)\,,\\
    \textup{ess\,sup}_{x\in \omega}{\vert (\cdot)(x)\vert}&\text{ if }p=+\infty\,.
        \end{cases} 
    \end{align*}
    If $p=2$, we employ the abbreviations $\smash{H^1(\omega)}\coloneqq \smash{H^{1,2}(\omega)}$, $\smash{H^1_0(\omega)}\coloneqq \smash{H^{1,2}_0(\omega)}$, and $\|\cdot\|_{\omega}\coloneqq \|\cdot\|_{2,\omega}$.\linebreak
    Moreover, we employ the same notation in the case that $\omega$ is replaced by a (relatively) open boundary part $\gamma\subseteq \partial\Omega$, in which case the Lebesgue measure $\mathrm{d}x$ is replaced by the~surface~\mbox{measure}~$\mathrm{d}s$. 

    The assumption $\Gamma_I\neq\emptyset$ ensures the validity of a
    Friedrich inequality (\textit{cf}.\ \cite[Ex.\ II.5.13]{Galdi}), which states that there exists a constant $c_{\mathrm{F}}>0$ such that for every ${v\in H^1(\Omega)}$,~it~holds~that
        \begin{align}\label{lem:poin_cont}
            \|v\|_{\Omega}\leq \smash{c_{\mathrm{F}}\,\big\{\|\nabla v\|_{\Omega}+\vert \langle v\rangle_{\Gamma_I}\vert\big\}}\,.\\[-6mm]\notag
        \end{align}
    \newpage

    \subsection{Transversal vector fields}\vspace{-0.5mm}

   \hspace{5mm}In this paper, we use the following definition of a transversal vector field of~a~domain~(\textit{cf}.~\cite{HMT07}):\enlargethispage{12.5mm}

	\begin{definition}\label{def:transversal}
		Let $\Omega\hspace{-0.15em}\subseteq\hspace{-0.15em} \mathbb{R}^d$, $d\hspace{-0.15em}\in\hspace{-0.15em} \mathbb{N}$, be a open set of locally finite perimeter with~outward~unit~normal vector field $  n\colon \hspace{-0.15em}\partial \Omega\hspace{-0.15em}\to \hspace{-0.15em}\mathbb{S}^{d-1}$. Then, $\Omega$ has a \emph{continuous (globally) transversal~vector~fields} if there exists a  vector field $k \hspace{-0.15em}\in\hspace{-0.15em} (C^0(\partial \Omega))^d$ and a constant $\kappa\hspace{-0.15em}>\hspace{-0.15em}0$~(the~\emph{transversality constant}~of~$k $)~such~that\vspace{-0.5mm}
			\begin{align}
				k \cdot  n\ge \kappa \quad\text{ a.e.\ on } \partial \Omega\,.\label{eq:transversal} 
			\end{align}
	\end{definition}

    \begin{remark}\label{rem:transversality}
        \begin{itemize}[noitemsep,topsep=2pt,leftmargin=!,labelwidth=\widthof{(ii)}]
            \item[(i)] The condition \eqref{eq:transversal}
        in Definition \ref{def:transversal} 
        is~equivalent~to 
        \begin{align*}
            \sphericalangle(k ,  n)=\arccos(k \cdot  n)\leq \arccos(\kappa)\quad\text{ a.e.\ on }\partial\Omega\,, 
        \end{align*}
        \textit{i.e.},
        the continuous (globally) transversal vector field $k \in (C^0(\partial \Omega))^d$ varies from the outward unit normal vector field $  n\colon \partial\Omega\to \mathbb{S}^{d-1}$ up to a maximal angle of $\arccos(\kappa)$ (\textit{cf}. Figure \ref{fig:transversality}).
            \item[(ii)] {According to \cite[Thm.\ 2.19, (2.74), (2.75)]{HMT07}, if $\Omega\subseteq \mathbb{R}^d$, $d\in \mathbb{N}$,  is a non-empty, bounded open set of locally finite perimeter, then 
            for every $\alpha\in [0,1]$, it holds that $n\in (C^{0,\alpha}(\partial\Omega))^d$ if and only if $\Omega$ is a $C^{1,\alpha}$-domain.}~Therefore, if $\Omega$ is  a $C^1$-domain, the outward unit normal vector field 
            is a continuous globally transversal vector field (with transversality constant 1).
        \end{itemize}
        
    \end{remark}

    The analysis of this paper crucially relies on the following result:

    \begin{theorem}\label{thm:ex_transversal}
        Let $\Omega\subseteq \mathbb{R}^d$, $d\in \mathbb{N}$, be a non-empty, bounded Lipschitz domain. Then, $\Omega$ has a smooth (globally) transversal vector field $k\in (C^\infty(\mathbb{R}^d))^d$.
    \end{theorem}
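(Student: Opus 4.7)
The plan is to reduce the global construction to a local one via the Lipschitz-chart representation of $\partial\Omega$, and then glue the local pieces with a smooth partition of unity on $\mathbb{R}^d$. Since $\Omega$ is a bounded Lipschitz domain, for every $x_0\in\partial\Omega$ there exist a rigid motion $R_{x_0}$, an open neighborhood $U_{x_0}\subseteq \mathbb{R}^d$ of $x_0$, and a Lipschitz function $\phi_{x_0}\colon \mathbb{R}^{d-1}\to\mathbb{R}$ such that $R_{x_0}(\Omega\cap U_{x_0})=\{y\in R_{x_0}(U_{x_0})\mid y_d<\phi_{x_0}(y_1,\dots,y_{d-1})\}$. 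By compactness of $\partial\Omega$, finitely many such neighborhoods $U_1,\dots,U_N$ already cover $\partial\Omega$; after shrinking slightly, I would arrange the existence of an open neighborhood $V$ of $\partial\Omega$ with $V\subset \bigcup_{j=1}^{N}U_j$, and set $U_0\coloneqq \mathbb{R}^d\setminus \overline{V}$, so that $\{U_0,U_1,\dots,U_N\}$ is an open cover of $\mathbb{R}^d$ with $U_0\cap\partial\Omega=\emptyset$.

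In each chart $U_j$, $j\ge 1$, Rademacher's theorem applied to $\phi_j$ shows that the outward unit normal $n$ exists $\mathcal{H}^{d-1}$-a.e.\ on $\partial\Omega\cap U_j$, and in the transformed coordinates it is given by $\tilde{n}(y)=(-\nabla\phi_j(y'),1)/\sqrt{1+|\nabla\phi_j(y')|^2}$. Denoting by $A_j\in O(d)$ the orthogonal part of $R_j$ and setting $k_j\coloneqq A_j^\top e_d\in\mathbb{S}^{d-1}$ (a constant vector in the original coordinates), the transformation rule $\tilde n(R_j(x))=A_j\,n(x)$ yields
\begin{align*}
    k_j\cdot n(x)\;=\;\tilde n(R_j(x))\cdot e_d\;\ge\;\frac{1}{\sqrt{1+L_j^2}}\;=:\;\kappa_j\;>\;0
\end{align*}
for a.e.\ $x\in\partial\Omega\cap U_j$, where $L_j$ is the Lipschitz constant of $\phi_j$. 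Thus each chart furnishes a constant (and therefore $C^\infty$) transversal direction with an explicit transversality constant.

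Next, I would pick a smooth partition of unity $\{\psi_0,\psi_1,\dots,\psi_N\}\subseteq C^\infty(\mathbb{R}^d)$ subordinate to $\{U_0,U_1,\dots,U_N\}$ and define
\begin{align*}
    k(x)\;\coloneqq\;\sum_{j=1}^{N}\psi_j(x)\,k_j \qquad \text{for all }x\in\mathbb{R}^d.
\end{align*}
Since each $k_j$ is constant and each $\psi_j$ is smooth, it follows immediately that $k\in (C^\infty(\mathbb{R}^d))^d$. For $x\in\partial\Omega$, the arrangement $U_0\cap\partial\Omega=\emptyset$ gives $\psi_0(x)=0$, hence $\sum_{j=1}^{N}\psi_j(x)=1$; moreover $\psi_j(x)\neq 0$ only if $x\in U_j$, which makes the pointwise bound $k_j\cdot n(x)\ge\kappa_j$ applicable to each nonzero summand. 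Summing, I obtain $k(x)\cdot n(x)\ge\kappa\coloneqq \min_{1\le i\le N}\kappa_i>0$ for a.e.\ $x\in\partial\Omega$, which is precisely the transversality condition \eqref{eq:transversal} of Definition \ref{def:transversal}.

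The only genuine subtlety I anticipate is arranging the cover so that the ``exterior'' partition function $\psi_0$ vanishes identically on $\partial\Omega$ while preserving smoothness on all of $\mathbb{R}^d$; this is handled by the standard device of choosing the nested open neighborhoods $V\subset\subset\bigcup_{j=1}^{N}U_j$ described in the first paragraph. Beyond this bookkeeping, the argument is a routine gluing procedure, and the only place where the geometry of $\partial\Omega$ enters essentially is through the local Lipschitz-subgraph representation, which is available precisely because $\Omega$ is a bounded Lipschitz domain.
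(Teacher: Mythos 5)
Your proof is correct, and it supplies a self-contained argument that the paper itself does not: the paper's ``proof'' is just a citation to \cite[Cor.~2.13]{HMT07}, which establishes exactly this fact via the same construction you describe (local subgraph charts giving constant transversal directions, plus a smooth partition of unity to glue). Your chain of estimates $k(x)\cdot n(x)=\sum_{j}\psi_j(x)\,k_j\cdot n(x)\ge\bigl(\min_j\kappa_j\bigr)\sum_{j\ge 1}\psi_j(x)=\kappa$ for a.e.\ $x\in\partial\Omega$ is sound, since $\psi_0$ vanishes on $\partial\Omega$ and $\operatorname{supp}\psi_j\subset U_j$ for $j\ge1$.

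One remark worth keeping in mind, even though it does not affect correctness of the theorem as stated: Theorem~\ref{thm:ex_transversal} only asserts the transversality inequality, yet the paper subsequently uses $k$ of unit length (e.g., $|k|=1$ is invoked in the proof of Lemma~\ref{lem:poincare} and in the modeling discussion of Section~3). Your $k$ is a convex combination of unit vectors, so $|k|\le 1$ with equality failing in general. Since $|k|\ge k\cdot n\ge\kappa>0$ on $\partial\Omega$ and $k$ is continuous, $|k|$ is bounded below on a neighborhood of the compact set $\partial\Omega$, so one may replace $k$ near $\partial\Omega$ by $k/|k|$ (and smoothly extend) to obtain unit length without losing smoothness or the transversality bound; it would be clean to add this normalization step explicitly if you intend the constructed $k$ to be fed into the later lemmas that assume $|k|=1$.
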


    \begin{proof}
        See \cite[Cor.\ 2.13]{HMT07}.
    \end{proof}\vspace{-5mm}

    \begin{figure}[H]
        \centering

  
\tikzset {_msvodpya2/.code = {\pgfsetadditionalshadetransform{ \pgftransformshift{\pgfpoint{0 bp } { 0 bp }  }  \pgftransformrotate{-90 }  \pgftransformscale{2 }  }}}
\pgfdeclarehorizontalshading{_iov09x9al}{150bp}{rgb(0bp)=(0.89,0.89,0.89);
rgb(37.5bp)=(0.89,0.89,0.89);
rgb(37.5bp)=(0.82,0.82,0.82);
rgb(50bp)=(0.86,0.86,0.86);
rgb(62.5bp)=(1,1,1);
rgb(100bp)=(1,1,1)}

  
\tikzset {_v44o4ydqg/.code = {\pgfsetadditionalshadetransform{ \pgftransformshift{\pgfpoint{0 bp } { 0 bp }  }  \pgftransformrotate{-90 }  \pgftransformscale{2 }  }}}
\pgfdeclarehorizontalshading{_6it5j5m0h}{150bp}{rgb(0bp)=(0.89,0.89,0.89);
rgb(37.5bp)=(0.89,0.89,0.89);
rgb(37.5bp)=(0.82,0.82,0.82);
rgb(50bp)=(0.86,0.86,0.86);
rgb(62.5bp)=(1,1,1);
rgb(100bp)=(1,1,1)}

  
\tikzset {_bm0pqtnm7/.code = {\pgfsetadditionalshadetransform{ \pgftransformshift{\pgfpoint{0 bp } { 0 bp }  }  \pgftransformrotate{-270 }  \pgftransformscale{2 }  }}}
\pgfdeclarehorizontalshading{_h14krniqu}{150bp}{rgb(0bp)=(0.89,0.89,0.89);
rgb(37.5bp)=(0.89,0.89,0.89);
rgb(37.5bp)=(1,1,1);
rgb(49.732142857142854bp)=(0.86,0.86,0.86);
rgb(62.5bp)=(0.82,0.82,0.82);
rgb(100bp)=(0.82,0.82,0.82)}
\tikzset{every picture/.style={line width=0.75pt}} 



        \caption{A thin insulating layer  $\Sigma_{I}^{\varepsilon}$ of thickness $\varepsilon \mathtt{d}\colon  \Gamma_I\to  [0,+\infty)$ at an
         insulated boundary $\Gamma_I$ of a Lipschitz domain $\Omega$ with 
         outward unit normal  vector field ${n\colon  \partial\Omega\to  \mathbb{S}^{d-1}}$~is~\mbox{depicted}:
        \textit{top:} discontinuities of $n\colon \partial\Omega\to  \mathbb{S}^{d-1}$
        lead to gaps (\textit{i.e.}, no insulating material is applied)~or~self-intersections (\textit{i.e.}, insulating material is applied twice) in  $\widetilde{\Sigma}_{I}^{\varepsilon} \coloneqq \{s+t n(s)\mid s\in \Gamma_I\,,\;t\in (0,\varepsilon \mathtt{d}(s)]\}$;
        \textit{bottom:} gaps and self-intersections in $\Sigma_{I}^{\varepsilon}\coloneqq \{s+tk(s)\mid s\in  \Gamma_I\,,\;t\in (0,\varepsilon \mathtt{d}(s)]\}$ are avoided by replacing $n\colon\hspace{-0.1em} \partial\Omega\hspace{-0.1em}\to\hspace{-0.1em} \mathbb{S}^{d-1}$ by a unit-length continuous (globally) transversal~vector~field~${k\colon \hspace{-0.1em}\partial\Omega\hspace{-0.1em}\to\hspace{-0.1em}\mathbb{S}^{d-1}}$, which varies to $n\colon \partial\Omega\to \mathbb{S}^{d-1}$~up~to~a~maximal~angle~of~$\arccos(\kappa)$.}
        \label{fig:transversality}
    \end{figure} \newpage

      \section{Model reduction for the thickness of the thin insulating layer}\label{sec:modelling}\vspace{-1mm}\enlargethispage{6.5mm}

    \hspace{5mm}Let $k\in \smash{(C^0(\partial\Omega))^d}$ be a continuous (globally) transversal vector field of $\Omega$ with transversality constant $\kappa\in(0,1]$, whose existence is guaranteed by Theorem \ref{thm:ex_transversal}, let 
 $\varepsilon>0$~be~a~fixed,~but~arbi-trary small number, and let $\mathtt{d}\in L^\infty(\Gamma_I)$ be a non-negative  distribution function (in  direction~of~$k$). Then, we define the \textit{thin {insulating}  layer} (with respect to $k$ with thickness $\varepsilon \mathtt{d}$) $\Sigma_{I}^{\varepsilon}\subseteq \mathbb{R}^d$, the \textit{interacting insulation boundary} $\Gamma_{I}^{\varepsilon}\subseteq \partial\Sigma_{I}^{\varepsilon}$,  and the \textit{insulated body} $\Omega_{I}^{\varepsilon}\subseteq \mathbb{R}^d$, respectively, via
     \begin{align}\label{def:some_notation}
        \Sigma_{I}^{\varepsilon}&\coloneqq \big\{s+tk(s)\mid s\in \Gamma_I\,,\;t\in [0,\varepsilon \mathtt{d}(s))\big\}\,,\\
        \Gamma_{I}^{\varepsilon}&\coloneqq \big\{s+\varepsilon \mathtt{d}(s)k(s)\mid s\in \Gamma_I\big\}\,,\\
        \smash{\Omega_{I}^{\varepsilon}}&\coloneqq \smash{\Omega\cup\Sigma_{I}^{\varepsilon}}\,.
    \end{align}

    Furthermore, let $f\in \smash{L^2(\Omega)}$ be a  given \textit{heat source density}, $g\in \smash{(H^{\smash{\frac{1}{2}}}(\Gamma_N))^\ast}$ a given \textit{heat flux}, and $u_D\in H^{\smash{\frac{1}{2}}}(\Gamma_D)$ a given \textit{temperature distribution} {at the Dirichlet boundary $\Gamma_D$}.
    Then,~we~consider~the~{\textit{heat~loss}}~functional $\smash{E}_\varepsilon^\mathtt{d}\colon 
    H^1(\Omega_{I}^{\varepsilon})\to \mathbb{R}\cup\{+\infty\}$, for every $v_\varepsilon\in H^1(\Omega_{I}^{\varepsilon})$ defined by 
    \begin{align}\label{eq:Evarh}
        \smash{\smash{E}_\varepsilon^{\mathtt{d}}(v_\varepsilon)\coloneqq \tfrac{1}{2}\|\nabla v_\varepsilon\|_{\Omega}^2+\tfrac{\varepsilon}{2}\|\nabla v_\varepsilon\|_{\Sigma_{I}^{\varepsilon}}^2-(f,v_\varepsilon)_{\Omega}-\langle g,v_\varepsilon\rangle_{\smash{H^{\smash{\frac{1}{2}}}(\Gamma_N)}}+I_{\{u_D\}}^{\Gamma_D}(v_\varepsilon)+I_{\{0\}}^{\Gamma_{I}^{\varepsilon}}(v_\varepsilon)}\,,
    \end{align}
    where $I_{\{u_D\}}^{\Gamma_D}\colon H^1(\Omega)\to \mathbb{R}\cup\{+\infty\}$ and $\smash{I_{\{0\}}^{\Gamma_{I}^{\varepsilon}}}\colon H^1(\Omega_{I}^{\varepsilon})\to \mathbb{R}\cup\{+\infty\}$ denote indicator functionals, which, for every $\widehat{v}\in H^1(\Omega)$ and $\widehat{v}_\varepsilon\in H^1(\Omega_{I}^{\varepsilon})$, respectively, are defined by 
    \begin{align*}
        I_{\{u_D\}}^{\Gamma_D}(\widehat{v})\coloneqq \begin{cases}
            0&\text{ if }\widehat{v}=u_D\text{ a.e.\ on }\Gamma_D\,,\\
            +\infty&\text{ else}\,,
        \end{cases}\qquad \smash{I_{\{0\}}^{\Gamma_{I}^{\varepsilon}}}(\widehat{v}_\varepsilon)\coloneqq \begin{cases}
            0&\text{ if }\widehat{v}_\varepsilon=0\text{ a.e.\ on }\Gamma_{I}^{\varepsilon}\,,\\
            +\infty&\text{ else}\,.
        \end{cases}
    \end{align*} 
    Since the functional \eqref{eq:Evarh} is proper, strictly convex, weakly coercive, and lower semi-continuous,~the
    direct method in the calculus of variations yields the existence of a unique~minimizer~${u_\varepsilon^{\mathtt{d}}\in H^1(\Omega_{I}^{\varepsilon})}$, which formally satisfies the  Euler--Lagrange equations\vspace{-0.5mm}
    \begin{align}\label{eq:ELE_Eepsh}
        \begin{aligned}
            -\Delta u_\varepsilon^{\mathtt{d}}&=f&&\quad \text{ a.e.\ in }\Omega\,,\\[-0.5mm]
            u_\varepsilon^{\mathtt{d}}&=u_D&&\quad\text{ a.e.\ on }\Gamma_D\,,\\[-0.75mm]
            \nabla u_\varepsilon^{\mathtt{d}}\cdot n&=g&&\quad\text{ a.e.\ on }\Gamma_N\,,\\[-0.75mm]
            -\varepsilon\Delta u_\varepsilon^{\mathtt{d}}&=0&&\quad \text{ a.e.\ in }\Sigma_{I}^{\varepsilon}\,,\\[-0.75mm]
            u_\varepsilon^{\mathtt{d}}&=0&&\quad \text{ a.e.\ on }\Gamma_{I}^{\varepsilon}\,,\\[-0.75mm]
            \nabla (u^{\mathtt{d}}_\varepsilon)^+\cdot n&=\varepsilon\nabla (u^{\mathtt{d}}_\varepsilon)^-\cdot n&&\quad \text{ a.e.\ on } \Gamma_I\,,\\[-0.5mm]
        \end{aligned}
    \end{align}
    where the boundary condition \eqref{eq:ELE_Eepsh}$_6$  represents a transmission condition across~the~\mbox{boundary}~$\Gamma_I$, where $\smash{(u^{\mathtt{d}}_\varepsilon)^-}$ and $\smash{(u^{\mathtt{d}}_\varepsilon)^+}$ denote the
    traces of $\smash{u^{\mathtt{d}}_\varepsilon}$ with respect to $\Omega$ and $\smash{\Sigma_{I}^{\varepsilon}}$, respectively.

    In the case $k\in \smash{(C^{0,1}(\Gamma_I))^d}$ and $\mathtt{d}\in\smash{C^{0,1}(\Gamma_I)}$ with $\mathtt{d}\ge \mathtt{d}_{\textup{min}}$ a.e.\ in $\Gamma_I$, for some $\mathtt{d}_{\textup{min}}>0$, if we pass to the limit (as $\varepsilon\to 0^{+}$)~with~a~family of trivial extensions to $L^2(\mathbb{R}^d)$ of the energy functionals $\smash{E}_\varepsilon^\mathtt{d}\colon H^1(\Omega_{I}^{\varepsilon})\to \mathbb{R}\cup\{+\infty\}$ 
    in the sense of $\Gamma(L^2(\mathbb{R}^d))$-convergence (\textit{cf}.\ Theorem \ref{thm:main}), we arrive at  the energy functional~${\smash{E}^\mathtt{d}\colon \hspace{-0.1em}H^1(\Omega)\hspace{-0.1em}\to\hspace{-0.1em} \mathbb{R}\cup\{+\infty\}}$, for every $v\in H^1(\Omega)$ defined by
    \begin{align}\label{eq:Eh}
        \smash{\smash{E}^{\mathtt{d}}(v)\coloneqq \tfrac{1}{2}\|\nabla v\|_{\Omega}^2+\tfrac{1}{2}{\|((k\cdot n)\mathtt{d})^{-\smash{\frac{1}{2}}}v\|_{\Gamma_I}^2}-(f,v)_{\Omega}-\langle g,v\rangle_{\smash{H^{\smash{\frac{1}{2}}}(\Gamma_N)}}+I_{\{u_D\}}^{\Gamma_D}(v)}\,.
    \end{align}
    In the functional \eqref{eq:Eh}, the second term, again, is the \textit{`interface'} energy,  accounting for the interaction of the system at $\Gamma_I$ with the exterior, now mediated by the scaled distribution~function~$(k\cdot n)\mathtt{d}$. \linebreak
    Since the functional \eqref{eq:Eh} is proper, strictly convex, weakly coercive, and lower semi-continuous,~the
    direct method in the calculus of variations yields the existence of a~unique~minimizer~${u^{\mathtt{d}}\in H^1(\Omega)}$, which formally satisfies the Euler--Lagrange equations\vspace{-0.5mm}
    \begin{align}\label{eq:ELE_Eh}
        \begin{aligned}
            -\Delta u^{\mathtt{d}}&=f&&\quad \text{ a.e.\ in }\Omega\,,\\[-0.5mm]
            (k\cdot n)\mathtt{d}\nabla u^{\mathtt{d}}\cdot n+u^{\mathtt{d}}&=0&&\quad \text{ a.e.\ on }\Gamma_I\,,\\[-0.75mm]
             u^{\mathtt{d}} &=u_D&&\quad \text{ a.e.\ on }\Gamma_D\,,\\[-0.75mm]
             \nabla u^{\mathtt{d}} \cdot n &=g&&\quad \text{ a.e.\ on }\Gamma_N\,,\\[-0.5mm]
        \end{aligned}
    \end{align}
    where the boundary condition on $\smash{\Gamma_I}$ in \eqref{eq:ELE_Eh}$_2$ is a Robin boundary condition. 
    
    \pagebreak
    We are interested in determining the non-negative distribution function $\mathtt{d}\in  L^\infty(\Gamma_I)$ that provides the best insulating performance, once
the total amount of insulating material~is~fixed. Note that  $\mathtt{d}\hspace{-0.15em}\in  \hspace{-0.15em}L^\infty(\Gamma_I)$ specifies the distribution of the insulating material in~direction~of~${k\hspace{-0.15em}\in\hspace{-0.15em} (C^0(\partial\Omega))^d}$. However, it is more natural to describe the distribution of the insulating material in  direction of  $n\in (L^\infty(\partial\Omega))^d$. The distribution of the insulating material~in~the~direction~of $n\in (L^\infty(\partial\Omega))^d$, denoted by $\widetilde{\mathtt{d}}\in L^\infty(\Gamma_I)$, can be computed from $\mathtt{d}\in  L^\infty(\Gamma_I)$~via~(\textit{cf}.~Figure~\ref{fig:thickness})\vspace{-0.5mm}
\begin{align}\label{eq:relation_h_h_tilde}
    \widetilde{\mathtt{d}}=(k\cdot n)\mathtt{d}\quad \text{ a.e.\ on }\Gamma_I\,.\\[-5.75mm]\notag
\end{align}
Therefore, the total amount of the insulating material should be measured in the weighted~norm $\|(k\cdot n)(\cdot)\|_{1,\Gamma_I}$ instead of $\|\cdot\|_{1,\Gamma_I}$. This is also supported~by~the~fact~that,~by~the~Lebesgue~differentiation theorem for vanishing boundary layers  (\textit{cf}.\ Lemma \ref{eq:Lebesgue_boundary_limit} with $a\equiv v\equiv p=1$),~we~have~that\vspace{-0.5mm}
\begin{align}\label{eq:measure_convergence}
    \smash{\tfrac{1}{\varepsilon}}\vert \Sigma_{I}^{\varepsilon}\vert\to \|(k\cdot n)\mathtt{d}\|_{1,\Gamma_I}\quad (\varepsilon\to 0^{+})\,.
\end{align}
For this reason, we seek a distribution function $\mathtt{d}\in L^\infty(\Gamma_I)$ (in direction of $k$)~in~the~class\vspace{-0.5mm}
    \begin{align*}
        \mathcal{H}_{I}^m\coloneqq \big\{ \overline{\mathtt{d}}\in L^1(\Gamma_I)\mid \overline{\mathtt{d}}\ge 0\text{ a.e.\ on }\Gamma_I\,,\; \|(k\cdot n) \overline{\mathtt{d}}\|_{1,\Gamma_I}=m\big\}\,,
    \end{align*}
    where \hspace{-0.1mm}$m\hspace{-0.1em}>\hspace{-0.1em}0$ \hspace{-0.1mm}is \hspace{-0.1mm}a \hspace{-0.1mm}fixed \hspace{-0.1mm}amount \hspace{-0.1mm}of \hspace{-0.1mm}the \hspace{-0.1mm}insulating \hspace{-0.1mm}material, 
    that yields a temperature distribution $u^{\mathtt{d}}\in H^1(\Omega)$ with minimal energy~(among~all~$\overline{\mathtt{d}}\in\mathcal{H}_{I}^m$),~\textit{i.e.},\vspace{-0.5mm}\enlargethispage{13.5mm}
    \begin{align}\label{eq:double_min}
        \min_{v\in H^1(\Omega)}{\smash{E}^{\mathtt{d}}(v)}=\min_{\overline{\mathtt{d}}\in \smash{\mathcal{H}_{I}^m}}{\min_{v\in H^1(\Omega)}{\smash{E}^{\mathtt{d}}(v)}}=\min_{v\in H^1(\Omega)}{\min_{\overline{\mathtt{d}}\in \smash{\mathcal{H}_{I}^m}}{\smash{E}^{\mathtt{d}}(v)}}\,.\\[-6mm]\notag
    \end{align}
The inner minimization problem on the right-hand side of \eqref{eq:double_min} defined~on~$\mathcal{H}_{I}^m$~for~fixed~${v\in H^1(\Omega)}$ can explicitly be solved via the formula 
    \begin{align}\label{eq:reconstruction}
        \mathtt{d}_v\coloneqq \smash{\tfrac{m}{\|v\|_{1,\Gamma_I}} \tfrac{\vert v\vert}{k\cdot n}}=\underset{\smash{\overline{\mathtt{d}}}\in \smash{\mathcal{H}_{I}^m}}{\textup{argmin}\,}{\smash{\smash{E}^{\mathtt{d}}(v)}}\,.\\[-7mm]\notag
    \end{align}
    {Inserting \eqref{eq:reconstruction} in \eqref{eq:double_min}, 
we arrive at a reduced problem, \textit{i.e.},} the minimization of the energy functional $I\colon H^1(\Omega)\to \mathbb{R}\cup\{+\infty\}$, for every $v\in H^1(\Omega)$ defined by\vspace{-0.5mm}
    \begin{align}
        I(v)\coloneqq  \smash{\tfrac{1}{2}}\| \nabla v\|_{\Omega}^2-(f,v)_{\Omega}+\smash{\tfrac{1}{2m}}\|v\|_{1,\Gamma_I}^2-\langle g,v\rangle_{\smash{H^{\smash{\frac{1}{2}}}(\Gamma_N)}} +I_{\{u_D\}}^{\Gamma_D}(v) \,.\label{eq:I}
    \end{align}
    Since \hspace{-0.1mm}the \hspace{-0.1mm}functional \hspace{-0.1mm}\eqref{eq:I} \hspace{-0.1mm}is \hspace{-0.1mm}proper, \hspace{-0.1mm}convex, weakly coercive, \hspace{-0.1mm}and \hspace{-0.1mm}lower \hspace{-0.1mm}semi-continuous,~\hspace{-0.1mm}the~\hspace{-0.1mm}direct \hspace{-0.1mm}method \hspace{-0.1mm}in \hspace{-0.1mm}the \hspace{-0.1mm}calculus \hspace{-0.1mm}of \hspace{-0.1mm}variations \hspace{-0.1mm}yields \hspace{-0.1mm}the \hspace{-0.1mm}existence \hspace{-0.1mm}of \hspace{-0.1mm}a \hspace{-0.1mm}minimizer $u\hspace{-0.1em}\in \hspace{-0.1em}H^1(\Omega)$,~\hspace{-0.1mm}which~\hspace{-0.1mm}is~\hspace{-0.1mm}unique~\hspace{-0.1mm}if $\Gamma_D\neq \emptyset$ or if $\Omega$ is connected (\textit{cf}.\ \cite[Prop.\ 2.1]{Buttazzo2017}) and formally satisfies~the~\mbox{Euler--Lagrange}~equations\vspace{-0.5mm}
    \begin{align}
    \begin{aligned}
    -\Delta u &= f&&\quad \text{ a.e.\ in }\Omega\,,\\[-0.5mm]
    -\nabla u\cdot n&\in 
    \tfrac{1}{m}(\partial\vert\cdot\vert)(u)\|u\|_{1,\Gamma_I}&&\quad\text{ a.e.\ on }\Gamma_I\,,
    \\[-0.5mm]u&=u_D&&\quad\text{ a.e.\ on }\Gamma_D\,,
    \\[-0.5mm]\nabla u\cdot n&=g&&\quad\text{ a.e.\ on }\Gamma_N\,,
    \end{aligned}
\end{align}
where 
$(\partial\,\vert\cdot\vert)(t)\coloneqq\textup{sign}(t)$ if $t\neq 0$ and $(\partial\,\vert\cdot\vert)(0)\coloneqq[-1,1]$. Once a minimizer of~\eqref{eq:I}~is~found, an optimal distribution of the insulation material of given amount $m>
0$~is~computable~via~\eqref{eq:reconstruction}.\vspace{-2.5mm}

    \begin{figure}[H]
        \centering

  
\tikzset {_4mttta82v/.code = {\pgfsetadditionalshadetransform{ \pgftransformshift{\pgfpoint{0 bp } { 0 bp }  }  \pgftransformrotate{-270 }  \pgftransformscale{2 }  }}}
\pgfdeclarehorizontalshading{_4slrs3ggv}{150bp}{rgb(0bp)=(1,1,1);
rgb(43.125bp)=(1,1,1);
rgb(62.5bp)=(0.89,0.89,0.89);
rgb(62.5bp)=(0.86,0.86,0.86);
rgb(62.5bp)=(0.82,0.82,0.82);
rgb(62.5bp)=(0.82,0.82,0.82);
rgb(100bp)=(0.82,0.82,0.82)}

  
\tikzset {_tvl6wo4kq/.code = {\pgfsetadditionalshadetransform{ \pgftransformshift{\pgfpoint{0 bp } { 0 bp }  }  \pgftransformrotate{-270 }  \pgftransformscale{2 }  }}}
\pgfdeclarehorizontalshading{_zkecytzyl}{150bp}{rgb(0bp)=(1,1,1);
rgb(43.125bp)=(1,1,1);
rgb(62.5bp)=(0.89,0.89,0.89);
rgb(62.5bp)=(0.86,0.86,0.86);
rgb(62.5bp)=(0.82,0.82,0.82);
rgb(62.5bp)=(0.82,0.82,0.82);
rgb(100bp)=(0.82,0.82,0.82)}

  
\tikzset {_2y8949orx/.code = {\pgfsetadditionalshadetransform{ \pgftransformshift{\pgfpoint{0 bp } { 0 bp }  }  \pgftransformrotate{-270 }  \pgftransformscale{2 }  }}}
\pgfdeclarehorizontalshading{_j54cgiqkt}{150bp}{rgb(0bp)=(1,1,1);
rgb(43.125bp)=(1,1,1);
rgb(62.5bp)=(0.89,0.89,0.89);
rgb(62.5bp)=(0.86,0.86,0.86);
rgb(62.5bp)=(0.82,0.82,0.82);
rgb(62.5bp)=(0.82,0.82,0.82);
rgb(100bp)=(0.82,0.82,0.82)}
\tikzset{every picture/.style={line width=0.75pt}} 

\vspace{-1mm}
        \caption{Sketch of relation between a distribution function $\mathtt{d}\colon \Gamma_I\to [0,+\infty)$ (in direction~of~$k$) and the associated distribution function $\widetilde{\mathtt{d}}\coloneqq (k\cdot n)\mathtt{d}\colon \Gamma_I\to [0,+\infty)$ (in direction of $n$).}
        \label{fig:thickness}
    \end{figure}\newpage
    
    \section{Auxiliary technical tools}\label{sec:tools}\vspace{-1mm}

    \hspace{5mm}In this section, we prove auxiliary technical tools that are needed for the $\Gamma$-convergence analysis in Section \ref{sec:gamma_convergence}. To this end, for the remainder of the paper, we assume that $k\in (C^{0,1}(\partial\Omega))^d$ is a Lipschitz continuous (globally) transversal vector field of $\Omega$ with transversality~constant~$k\in(0,1]$, whose existence is ensured by Theorem \ref{thm:ex_transversal}. Moreover, if not otherwise specified, let $\mathtt{d}\in L^\infty(\Gamma_I)$ be a given distribution function. Then, for these two functions, we employ the notation~in~\eqref{def:some_notation}.\vspace{-1.5mm}

    \subsection{Approximative transformation formula}\vspace{-0.5mm}
  
    \hspace{5mm}In this subsection, we prove an approximative transformation formula with respect~to~the~mapping $\Phi_\varepsilon \colon  D_{I}^{\varepsilon} \coloneqq   \bigcup_{s\in \partial\Omega}{\{s\}\times   [0,\varepsilon \mathtt{d}(s))} \to \Sigma_{I}^{\varepsilon}$, for every $(s,t)^\top \in  D_{I}^{\varepsilon}$~defined~by
    \begin{align*}
        \Phi_\varepsilon (s,t)\coloneqq s+tk(s)\,.
    \end{align*}
    As per the discussion in \cite[p.\ 633, 634]{HMT07}, there exists some  $\varepsilon_0>0$ such that for every $\varepsilon\in (0,\varepsilon_0)$ the mapping $\Phi_\varepsilon \colon D_{I}^{\varepsilon}\to \Sigma_{I}^{\varepsilon}$ is bi-Lipschitz continuous, so that a transformation~formula~applies.\vspace{-0.5mm}\enlargethispage{12.5mm}

    \begin{lemma}\label{lem:approx_trans_formula}
        For every $\varepsilon\in (0,\varepsilon_0)$ and $v\in L^1(\Sigma_{I}^{\varepsilon})$, it holds that\vspace{-0.75mm}
        \begin{align*}
            \int_{\Sigma_{I}^{\varepsilon}}{v\,\mathrm{d}x}=\int_{\Gamma_I}{\int_0^{\varepsilon \mathtt{d}(s)}{v(s+t k(s))\big\{k(s)\cdot n(s)+tR_\varepsilon(s,t)\big\}\,\mathrm{d}t}\,\mathrm{d}s}\,,
        \end{align*}
        where $R_\varepsilon\in  L^\infty(D_{I}^{\varepsilon})$, $\varepsilon\in (0,\varepsilon_0)$, depends only on the Lipschitz characteristics of $\Gamma_I$ and satisfies $\smash{\sup_{\varepsilon\in (0,\varepsilon_0)}{\{\| R_\varepsilon\|_{\infty,D_{I}^{\varepsilon}}\}}<+\infty}$.
    \end{lemma}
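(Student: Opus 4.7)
The plan is to apply the classical change-of-variables (area) formula for bi-Lipschitz maps to $\Phi_\varepsilon$ and then Taylor-expand the resulting Jacobian determinant around $t=0$. Since $\Phi_\varepsilon\colon D_I^\varepsilon\to\Sigma_I^\varepsilon$ is bi-Lipschitz for every $\varepsilon\in(0,\varepsilon_0)$ (as quoted from \cite{HMT07}), Rademacher's theorem yields a.e.\ differentiability, and the area formula gives
\begin{align*}
\int_{\Sigma_I^\varepsilon} v\,\mathrm{d}x
 \;=\; \int_{D_I^\varepsilon} v(\Phi_\varepsilon(s,t))\,\vert J_{\Phi_\varepsilon}(s,t)\vert\,\mathrm{d}(s,t).
\end{align*}
Everything then reduces to identifying $\vert J_{\Phi_\varepsilon}\vert$ with $k\cdot n+tR_\varepsilon$, with a uniform bound on $R_\varepsilon$.

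To compute the Jacobian, I would localize: since $\overline{\Gamma}_I$ is compact, cover it by finitely many open sets $V_j$ on which, after a rigid motion, $\Gamma_I\cap V_j$ is the graph of a Lipschitz function $\varphi_j\colon U_j\subseteq\mathbb{R}^{d-1}\to\mathbb{R}$, and use a subordinate partition of unity to reduce to a single chart. Writing $s=(\xi,\varphi_j(\xi))$ and using a Lipschitz extension of $k$ to a neighborhood, $\Phi_\varepsilon$ becomes $\widetilde{\Phi}_\varepsilon(\xi,t) = (\xi,\varphi_j(\xi))+t\,k(\xi,\varphi_j(\xi))$, and the surface measure satisfies $\mathrm{d}s = \sqrt{1+\vert\nabla\varphi_j(\xi)\vert^2}\,\mathrm{d}\xi$. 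The columns of $D\widetilde{\Phi}_\varepsilon(\xi,t)$ are $(e_i,\partial_i\varphi_j(\xi)) + t\,\partial_{\xi_i}[k(\cdot,\varphi_j(\cdot))](\xi)$ for $i=1,\ldots,d-1$, together with $k(\xi,\varphi_j(\xi))$ as the last column.

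Expanding the determinant along the last column at $t=0$ and using that $(-\nabla\varphi_j,1)$ is an unnormalized outward normal of length $\sqrt{1+\vert\nabla\varphi_j\vert^2}$, I get
\begin{align*}
J_{\widetilde{\Phi}_\varepsilon}(\xi,0) \;=\; k(s)\cdot(-\nabla\varphi_j(\xi),1)\;=\;(k\cdot n)(s)\,\sqrt{1+\vert\nabla\varphi_j(\xi)\vert^2}.
\end{align*}
Multilinearity of the determinant then shows that $J_{\widetilde{\Phi}_\varepsilon}(\xi,t)-J_{\widetilde{\Phi}_\varepsilon}(\xi,0) = t\,P_\varepsilon(\xi,t)$, where $P_\varepsilon$ is a polynomial in $t$ whose coefficients are bounded (essentially) by powers of $\Vert\nabla\varphi_j\Vert_\infty$ and $\Vert\nabla k\Vert_\infty$. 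Dividing by $\sqrt{1+\vert\nabla\varphi_j\vert^2}$ (to convert $\mathrm{d}\xi$ back to $\mathrm{d}s$) and setting $R_\varepsilon(s,t)\coloneqq P_\varepsilon(\xi(s),t)/\sqrt{1+\vert\nabla\varphi_j(\xi(s))\vert^2}$, I obtain
\begin{align*}
\frac{\vert J_{\widetilde{\Phi}_\varepsilon}(\xi,t)\vert}{\sqrt{1+\vert\nabla\varphi_j(\xi)\vert^2}}\;=\;(k\cdot n)(s)+t\,R_\varepsilon(s,t),
\end{align*}
where the absolute value can be dropped because $k\cdot n\geq\kappa>0$ dominates the $t$-correction for $\varepsilon_0$ sufficiently small. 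Gluing the local contributions via the partition of unity yields the claimed formula, and the uniform bound $\sup_{\varepsilon\in(0,\varepsilon_0)}\Vert R_\varepsilon\Vert_{\infty,D_I^\varepsilon}<\infty$ follows because only finitely many charts are needed, and the estimate in each chart depends only on the Lipschitz characteristics of $\Gamma_I$ (through $\Vert\nabla\varphi_j\Vert_\infty$) and of $k$.

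The main obstacle I expect is the bookkeeping in the Jacobian expansion: one must match the leading term precisely with $(k\cdot n)\sqrt{1+\vert\nabla\varphi_j\vert^2}$ so that, after the surface-measure factor cancels, the intrinsic quantity $k\cdot n$ emerges, and simultaneously verify that all remainder terms are genuinely of order $t$ with constants independent of $\varepsilon$. Once this algebraic step is handled carefully and the positivity $k\cdot n\geq\kappa$ is invoked to dispose of the absolute value, everything else is routine change of variables.
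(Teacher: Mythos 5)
Your proposal is correct and follows essentially the same route as the paper's proof: both localize $\Gamma_I$ by finitely many Lipschitz graphs (after a rigid motion), compute the Jacobian of the parametrized version of $\Phi_\varepsilon$, identify the leading term at $t=0$ as $(k\cdot n)\sqrt{1+\vert\nabla\gamma_i\vert^2}$, absorb that surface-measure factor into $\mathrm{d}s$, collect the remaining terms as $tR_\varepsilon$ with a uniform bound coming from the Lipschitz characteristics of $\Gamma_I$ and $k$, and glue the local contributions with a partition of unity. The only cosmetic differences are the sign convention in the unnormalized normal and that the paper places the partition of unity on $\Sigma_I^\varepsilon$ (subordinate to the images $F_\varepsilon^i(U_\varepsilon^i)$) rather than directly on $\Gamma_I$.
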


    \begin{proof}
        Since $\Omega$ is a bounded Lipschitz domain, there exist some $r>0$ and a finite number~${N\in \mathbb{N}}$~of affine isometric mappings $A_i\colon \hspace{-0.15em}\mathbb{R}^d\hspace{-0.15em}\to\hspace{-0.15em} \mathbb{R}^d$, $i\hspace{-0.15em}=\hspace{-0.15em}1,\ldots,N$, (\textit{i.e.}, $\mathrm{D}A_i\hspace{-0.15em}\equiv \hspace{-0.15em}O_i\hspace{-0.15em}\in\hspace{-0.15em} \mathrm{O}(d)$\footnote{$\mathrm{O}(d)\coloneqq \{ O\in\mathbb{R}^{d\times d}\mid O^{-1}=O^{\top}\}$.}~for~all~${i\hspace{-0.15em}=\hspace{-0.15em}1,\ldots,N}$) and Lipschitz mappings
        $\gamma_i\colon  B_r\coloneqq B_r^{d-1}(0)\to \mathbb{R}$, $i=1,\ldots,N$, such that $ \Gamma_I=\bigcup_{i=1}^N{A_i(\mathrm{graph}(\gamma_i))}$.
        Let \hspace{-0.1mm}$i\hspace{-0.15em}=\hspace{-0.15em}1,\ldots,N$ \hspace{-0.1mm}be \hspace{-0.1mm}arbitrary. \hspace{-0.1mm}Then, \hspace{-0.1mm}for \hspace{-0.1mm}every \hspace{-0.1mm}$\overline{x}\hspace{-0.15em}\in\hspace{-0.15em} B_r$,~\hspace{-0.1mm}abbreviating \hspace{-0.1mm}$s_i(\overline{x})\hspace{-0.15em}\coloneqq \hspace{-0.15em}A_i(\overline{x},\gamma_{i}(\overline{x}))$,~\hspace{-0.1mm}we~\hspace{-0.1mm}have~\hspace{-0.1mm}that         
        \begin{align} \label{lem:approx_trans_formula.1}
                \smash{O_i^{\top}n(s_i(\overline{x})) =\tfrac{1}{J_{\gamma_i}(\overline{x})}(\nabla\gamma_i(\overline{x})^\top ,-1)^\top\,,\quad
            \text{where}\quad J_{\gamma_i}(\overline{x})\coloneqq (1+\vert \nabla\gamma_i(\overline{x}) \vert^2)^{\frac{1}{2}}\,.}
        \end{align}
        The mapping $F_\varepsilon^{i} \colon U_\varepsilon^{i}\coloneqq \bigcup_{\overline{x}\in B_r}{\{\overline{x}\}\times [0,\varepsilon \mathtt{d}(s_i(\overline{x})))}\to \Sigma_{I}^{\varepsilon}$, for every $(\overline{x},t)^\top \in U_\varepsilon^{i}$~defined~by
        \begin{align}\label{lem:approx_trans_formula.2}
            F_\varepsilon^{i}(\overline{x},t)\coloneqq \Phi_\varepsilon(s_i(\overline{x}),t)\,,
        \end{align} 
        is Lipschitz continuous and, by Rademacher's theorem (\textit{cf}.\ \cite[Thm.\ 2.14]{AFP2000}),
        for a.e.\ $(\overline{x},t)^\top \in U_\varepsilon^{i}$
        \begin{align}\label{lem:approx_trans_formula.3}
            \mathrm{D}F_\varepsilon^{i}(\overline{x},t)=
            O_i\left[\begin{array}{c|c}
                \mathrm{I}_{(d-1)\times (d-1)} & \multirow{2}{*}{$O_i^{\top}k(s_i(\overline{x}))$\!\!} \\\cmidrule(){1-1}
                \nabla \gamma_{i}(\overline{x})^\top\vphantom{X^{X^X}} & 
            \end{array}\right]+t\,\mathrm{D}k(s_i(\overline{x}))O_i\left[\begin{array}{c|c}
                \mathrm{I}_{(d-1)\times (d-1)} & \multirow{2}{*}{$0_d$\!\!} \\\cmidrule(){1-1}
                \nabla \gamma_{i}(\overline{x})^\top\vphantom{X^{X^X}} & 
            \end{array}\right].
        \end{align}
        Thus, there exists a remainder $R_{\varepsilon}^{i}\in L^\infty(U_{\varepsilon}^{i})$, depending only on the Lipschitz characteristics~of~$\Gamma_I$, with  $\sup_{\varepsilon\in (0,\varepsilon_0)}{\{\|R_{\varepsilon}^{i}\|_{\infty,U_{\varepsilon}^{i}}\}}<+\infty$  such that
        for a.e.\ $(\overline{x},t)^\top \in U_\varepsilon^{i}$,
        it holds that\vspace{-0.5mm}
        \begin{align*}
            \vert \det \mathrm{D}F_\varepsilon^{i}(\overline{x},t )\vert&=
            \vert (O_i^{\top}k(s_i(\overline{x})))\cdot(\nabla\gamma_{i}(\overline{x})^\top,-1)^\top\vert +t\,R_{\varepsilon}^{i}(\overline{x},t)
            \\&=k(s_i(\overline{x}))\cdot n(s_i(\overline{x}))J_{\gamma_i}(\overline{x})+t\,R_{\varepsilon}^{i}(\overline{x},t)\,.
        \end{align*}
        Hence, if 
         $(\eta_i)_{i=1,\ldots,N}\subseteq C_0^\infty(\mathbb{R}^d)$ is a partition of unity subordinate to~the~open~covering~of~$\Sigma_{I}^{\varepsilon}$~by $(F_\varepsilon^{i}(U_\varepsilon^{i}))_{i=1,\ldots,N}\hspace{-0.1em}\subseteq\hspace{-0.1em} \mathbb{R}^d$, \textit{i.e.}, $\sum_{i=1}^N{\eta_i}\hspace{-0.12em}=\hspace{-0.12em}1$ in $\Sigma_{I}^{\varepsilon}$ and $\textup{supp}\,\eta_i\hspace{-0.12em}\subseteq\hspace{-0.12em} F_\varepsilon^{i}(U_\varepsilon^{i})$~for~all~${i\hspace{-0.12em}=\hspace{-0.12em}1,\ldots,N}$,~then,~by~the transformation theorem, Fubini's theorem, and the definition of the surface integral,~we~arrive~at
        \begin{align*}
             \int_{\Sigma_{I}^{\varepsilon}}{v\,\mathrm{d}x}
             &= \smash{\sum_{i=1}^N}{\int_{U^{i}_\varepsilon}{(\eta_iv)\circ F_\varepsilon^{i}\vert\textrm{det}\,\mathrm{D}F_\varepsilon^{i}\vert\,\mathrm{d}\overline{x}\mathrm{d}t}}
             \\&= \smash{\sum_{i=1}^N}{\int_{B_r}{\int_0^{\varepsilon \mathtt{d}(s_i(\overline{x}))}{\hspace{-0.25em}(\eta_iv)(s_i(\overline{x})\hspace{-0.1em}+\hspace{-0.1em}tk(s_i(\overline{x}))) \big\{k(s_i(\overline{x}))\cdot n(s_i(\overline{x}))J_{\gamma_i}(\overline{x})+t\,R_{\varepsilon}^{i}(\overline{x},t)\big\}\,\mathrm{d}t}}\,\mathrm{d}\overline{x}}
             \\&=\int_{\Gamma_I}{\int_0^{\varepsilon \mathtt{d}(s)}{\hspace{-0.25em}v(s\hspace{-0.1em}+\hspace{-0.1em}tk(s))\bigg\{k(s)\cdot n(s)\hspace{-0.1em}+\hspace{-0.1em}t\,\smash{\sum_{i=1}^N}{R_\varepsilon^{i}(s_i^{-1}(s),t)J_{\gamma_i}(s_i^{-1}(s))\chi_{s_i(B_r)}}(s)\bigg\}\,\mathrm{d}t}\,\mathrm{d}s}
             \,,
        \end{align*}
        which is the claimed approximative transformation  formula.\vspace{-1mm}
    \end{proof}\newpage

    \subsection{Lebesgue differentiation theorem with respect to vanishing boundary layers}

    \hspace{5mm}Resorting to the approximative transformation formula (\textit{cf}.\ Lemma \ref{lem:approx_trans_formula}), we are next in the position the prove a Lebesgue differentiation theorem with respect to vanishing boundary layers.\enlargethispage{8.5mm}
    
    \begin{lemma}\label{eq:Lebesgue_boundary_limit}
        Let $a\in L^\infty(\Gamma_I)$  and $v\in H^{1,p}(\Sigma_{I}^{\varepsilon_0})$, $p\in [1,+\infty)$. 
        Then, it holds that
        \begin{align*}
            \tfrac{1}{\varepsilon}\smash{\|a^{\smash{\frac{1}{p}}}v\|_{p,\Sigma_{I}^{\varepsilon}}^p}\to \smash{\|((k\cdot n)\mathtt{d}a)^{\smash{\frac{1}{p}}}v\|_{p,\Gamma_I}^p}\quad (\varepsilon\to 0^{+})\,,
        \end{align*}
        where we employ the extension $a(s+tk(s))\coloneqq a(s)$ for a.e.\ $s\in \Gamma_I$ and $t\in [0,{\varepsilon_0} \mathtt{d}(s))$ in $\Sigma_{I}^{{\varepsilon_0}}$.
    \end{lemma}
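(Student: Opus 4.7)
The strategy is to combine the approximative transformation formula of Lemma~\ref{lem:approx_trans_formula} with a Lebesgue-differentiation argument along the transversal rays $t\mapsto s+tk(s)$.

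First, applying Lemma~\ref{lem:approx_trans_formula} to the integrand $a|v|^p$ (extended to $\Sigma_I^{\varepsilon}$ as stipulated) and dividing by $\varepsilon$ yields, for every $\varepsilon\in(0,\varepsilon_0)$,
\begin{align*}
\tfrac{1}{\varepsilon}\|a^{1/p}v\|_{p,\Sigma_I^{\varepsilon}}^{p}
&= \int_{\Gamma_I} a(s)\,(k\cdot n)(s)\, h(s)\, L_\varepsilon(s)\,ds \\
&\quad + \int_{\Gamma_I} a(s)\,\tfrac{1}{\varepsilon}\int_0^{\varepsilon h(s)} t\,R_\varepsilon(s,t)\,|v(s+tk(s))|^p\,dt\,ds\,,
\end{align*}
where $L_\varepsilon(s):=\tfrac{1}{\varepsilon h(s)}\int_0^{\varepsilon h(s)}|v(s+tk(s))|^p\,dt$ on $\{h>0\}$ (and set to $0$ elsewhere). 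Since $|tR_\varepsilon(s,t)|\leq \varepsilon\|h\|_{\infty,\Gamma_I}\sup_{\varepsilon\in(0,\varepsilon_0)}\|R_\varepsilon\|_{\infty,D_I^{\varepsilon}}$, the remainder term is dominated by $C\varepsilon\,\|a\|_{\infty,\Gamma_I}\|h\|_{\infty,\Gamma_I}\|L_\varepsilon\|_{1,\Gamma_I}$. Provided $\|L_\varepsilon\|_{1,\Gamma_I}$ is uniformly bounded, this contribution tends to zero.

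The core task is therefore to show that $L_\varepsilon(s)\to |v(s)|^p$ as $\varepsilon\to 0^+$ for a.e.\ $s\in\Gamma_I$ with $h(s)>0$, together with a uniform $L^1(\Gamma_I)$-bound on $L_\varepsilon$. For $v\in C^\infty(\overline{\Sigma_I^{\varepsilon_0}})$ both properties are immediate from uniform continuity. For general $v\in H^{1,p}(\Sigma_I^{\varepsilon_0})$, I would use the fundamental theorem of calculus in the $k$-direction,
$$
|v(s+tk(s))|^p - |v(s)|^p = p\int_0^{t} |v|^{p-2} v(s+\tau k(s))\,\nabla v(s+\tau k(s))\cdot k(s)\,d\tau\,,
$$
followed by H\"older's inequality in $\tau$, Fubini on $\Gamma_I\times (0,\varepsilon\|h\|_\infty)$, and the transformation formula applied in reverse to convert iterated integrals over $D_I^{\varepsilon_0}$ into integrals over $\Sigma_I^{\varepsilon_0}$. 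This yields a uniform bound of the form $\|L_\varepsilon\|_{1,\Gamma_I}\leq C\,\|v\|_{H^{1,p}(\Sigma_I^{\varepsilon_0})}^p$ with $C$ independent of $\varepsilon$. Density of smooth functions in $H^{1,p}(\Sigma_I^{\varepsilon_0})$ combined with a standard $\varepsilon/3$-argument then transfers the a.e.\ pointwise convergence from smooth approximants to $v$, and dominated convergence on $\Gamma_I$ (with integrable dominant $\|a\|_\infty\|h\|_\infty L_\varepsilon$) gives
$$
\int_{\Gamma_I} a(s)(k\cdot n)(s) h(s) L_\varepsilon(s)\,ds \;\longrightarrow\; \int_{\Gamma_I} a(s)(k\cdot n)(s) h(s) |v(s)|^p\,ds = \|((k\cdot n)ha)^{1/p}v\|_{p,\Gamma_I}^p\,.
$$

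The main obstacle will be the density step: establishing the uniform $L^1$-bound on $L_\varepsilon$ with a constant independent of $\varepsilon$. This crucially exploits the bi-Lipschitz property of $\Phi_\varepsilon$ (valid because $k\cdot n\geq \kappa>0$), which ensures that the line-by-line estimates on $D_I^{\varepsilon_0}$ can be converted to genuine $H^{1,p}(\Sigma_I^{\varepsilon_0})$-bounds with constants controlled by the Lipschitz characteristics of $\Gamma_I$, $k$, and the transversality constant $\kappa$ alone.
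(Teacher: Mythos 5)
Your opening decomposition is exactly the paper's: apply Lemma~\ref{lem:approx_trans_formula} to $a|v|^p$, pull out the $(k\cdot n)$ part of the Jacobian and discard the $tR_\varepsilon$ remainder as an $O(\varepsilon)$ error, then reduce to a statement about the ray averages (the paper works with $F_\varepsilon(s)\coloneqq\int_0^{\varepsilon h(s)}\big||v(s+tk(s))|^p-|v(s)|^p\big|\,\mathrm{d}t$, which is $\varepsilon h(s)\cdot|L_\varepsilon(s)-|v(s)|^p|$ up to the absolute value being pulled inside). The Newton--Leibniz/Taylor step in the $k$-direction, H\"older, Jensen, and the reverse use of the transformation formula that you sketch for the $L^1$-bound on $L_\varepsilon$ are also precisely the ingredients the paper uses.

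Where your plan diverges — and where it has genuine gaps — is the finishing argument. You propose to prove $L_\varepsilon(s)\to|v(s)|^p$ for a.e.\ $s$ by density in $H^{1,p}$ plus an $\varepsilon/3$-argument, and then to pass to the limit in $\int_{\Gamma_I}a(k\cdot n)hL_\varepsilon\,\mathrm{d}s$ by dominated convergence. Neither step closes as stated. First, a density plus uniform-bound argument does not upgrade a.e.\ pointwise convergence from smooth approximants to general $H^{1,p}$ functions; for that you would need a maximal-function estimate of Hardy--Littlewood type on $\sup_{\varepsilon}L_\varepsilon$ (the standard mechanism behind Lebesgue differentiation), which you have not supplied. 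Second, the object you name as the ``integrable dominant'', $\|a\|_\infty\|h\|_\infty L_\varepsilon$, is itself $\varepsilon$-dependent, so it cannot serve as a dominating function in the dominated-convergence theorem; a uniform $L^1$-bound on $L_\varepsilon$ does not furnish a fixed dominant nor uniform integrability. The fix is to aim lower and prove $L^1(\Gamma_I)$-convergence $\|L_\varepsilon-|v|^p\|_{1,\Gamma_I}\to 0$ directly (this is all that is needed, since $a(k\cdot n)h\in L^\infty(\Gamma_I)$). That is exactly what the paper does: the chain H\"older ($s$) + Jensen ($t$) + transformation formula yields $\tfrac1\varepsilon\|F_\varepsilon\|_{1,\Gamma_I}\lesssim \|\nabla v\|_{p,\Sigma_I^\varepsilon}+\|v\|_{p,\Sigma_I^\varepsilon}+\varepsilon\|v\|_{p,\Gamma_I}\to 0$ by absolute continuity of the integral as $|\Sigma_I^\varepsilon|\to 0$, with no density argument and no pointwise statement. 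Your tools already give this; re-aim the argument at the $L^1$ discrepancy and the proof closes.
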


    \begin{proof} We proceed similar to \cite[Lem.\ III.1]{AcerbiButtazzo1986}.
    Due to the approximative transformation formula (\textit{cf}.\ Lemma \ref{lem:approx_trans_formula}), we have that
    \begin{align}\label{eq:Lebesgue_boundary_limit.1}
    \begin{aligned} 
         \big\vert\tfrac{1}{\varepsilon}\|a^{\frac{1}{p}}v\|_{p,\Sigma_{I}^{\varepsilon}}^p-\|((k\cdot n)\mathtt{d}a)^{\frac{1}{p}}v\|_{p,\Gamma_I}^p\big\vert
         &\leq \tfrac{\|a\|_{\infty,\Gamma_I}}{\varepsilon}\big\{1+\varepsilon\|\mathtt{d}\|_{\infty,\Gamma_I}\|R_\varepsilon\|_{\infty,D_{I}^{\varepsilon}}\big\}\|F_\varepsilon\|_{1,\Gamma_I}
         \\&\quad +\|a\|_{\infty,\Gamma_I}\varepsilon\|\mathtt{d}\|_{\infty,\Gamma_I}^2\|R_\varepsilon\|_{\infty,D_{I}^{\varepsilon}}\|v\|_{p,\Gamma_I}^p\,,
         \end{aligned}
    \end{align}
    where $R_\varepsilon\in  L^\infty(D_{I}^{\varepsilon})$, $\varepsilon\in (0,\varepsilon_0)$, is as in Lemma \ref{lem:approx_trans_formula} and, for every $s\in \Gamma_I$, we define\vspace{-0.5mm}
    \begin{align}
        F_\varepsilon(s)\coloneqq \int_0^{\varepsilon \mathtt{d}(s)}{\big\vert \vert v(s+tk(s))\vert^p-\vert v(s)\vert^p\big\vert\,\mathrm{d}t}\,.\label{eq:Lebesgue_boundary_limit.2}
    \end{align} \if0
    \textcolor{blue}{KK: just to double check, when I do the calculation I get
    \begin{align*}
        \big\vert\tfrac{1}{\varepsilon}\|a^{\frac{1}{p}}v\|_{p,\Sigma_{I}^{\varepsilon}}^p-\|((k\cdot n)\mathtt{d}a)^{\frac{1}{p}}v\|_{p,\Gamma_I}^p\big\vert
         &\leq \tfrac{\|a\|_{\infty,\Gamma_I}}{\varepsilon}\big\{1+\varepsilon\|\mathtt{d}\|_{\infty,\Gamma_I}\|R_\varepsilon\|_{\infty,D_{I}^{\varepsilon}}\big\}\|F_\varepsilon\|_{1,\Gamma_I}\, \\
         & \qquad +  \norm{h}_{\infty,\Gamma_I} \del{ \omega_{\varepsilon} + \varepsilon \norm{a}_{\infty,\Gamma_I} \norm{h}_{\infty,\Gamma_I}\norm{R_{\varepsilon}}_{\infty,\Sigma_{I}^\varepsilon}} \norm{v}_{p,\Gamma_I}^p,
         \end{align*}
         with $\omega_\varepsilon = \sup \{|a(x+y)-a(x)| \, : \, x \in \Gamma_I, \, |y| \le \epsilon \norm{h}_{\infty,\Gamma_I}\}$.\\}\fi
    A Taylor formula and the Newton--Leibniz formula, for a.e.\ $s\in \Gamma_I$ and $t\in (0,\varepsilon \mathtt{d}(s)]$, yield that
    \begin{align}\label{eq:Lebesgue_boundary_limit.3}
    \begin{aligned} 
        \big\vert\vert v(s+tk(s))\vert^p-\vert v(s)\vert^p\big\vert&=
        p\bigg\{\int_0^1{\big\{\lambda \vert v(s+tk(s))\vert+(1-\lambda) \vert v(s)\vert\big\}^{p-1}\,\mathrm{d}\lambda}\bigg\}
        \\&\quad\times\big\vert\vert v(s+tk(s))\vert-\vert v(s)\vert\big\vert
        \\[-1mm]&\leq 2^{p-1}\big\{\vert v(s+tk(s))\vert^{p-1}+\vert v(s)\vert^{p-1}\big\}\int_0^{\varepsilon \mathtt{d}(s)}{\vert \nabla v(s+\tau k(s))\vert\,\mathrm{d}\tau }\,.
        \end{aligned}\hspace{-2.5mm}
    \end{align}
    Then, using \eqref{eq:Lebesgue_boundary_limit.3} in \eqref{eq:Lebesgue_boundary_limit.2}, Hölder's inequality (with respect to $s\in \Gamma_I$), Jensen's inequality (with respect to $t\in (0,\varepsilon \mathtt{d}(s)]$), and that $k(s)\cdot n(s)+tR_\varepsilon(s,t)\ge \kappa- \varepsilon\|\mathtt{d}\|_{\infty,\Gamma_I}\|R_\varepsilon\|_{\infty,D_{I}^{\varepsilon}}$ for a.e. $\smash{(t,s)^\top}\in D_I^{\varepsilon}$ together with the approximative transformation formula (\textit{cf}.\ Lemma \ref{lem:approx_trans_formula}),~we~find~that
    \begin{align}
        \|F_\varepsilon\|_{1,\Gamma_I}
        &\leq 2^{p-1}\int_{\Gamma_I}\bigg\{
           \int_0^{\varepsilon \mathtt{d}(s)}{\vert \nabla v(s+\tau k(s))\vert\,\mathrm{d}\tau }\bigg\}\notag\\&\qquad\times\bigg\{\int_0^{\varepsilon \mathtt{d}(s)}{\big\{\vert v(s+tk(s))\vert^{p-1}+\vert v(s)\vert^{p-1}\big\}\,\mathrm{d}t}\,\bigg\}\mathrm{d}s\notag
           \\&\leq  2^{p-1}\bigg(\int_{\Gamma_I}{\bigg\{
          \varepsilon \mathtt{d}(s) \fint_0^{\varepsilon \mathtt{d}(s)}{\vert \nabla v(s+t k(s))\vert\,\mathrm{d}t }\bigg\}^{\smash{p}}\mathrm{d}s}\bigg)^{\smash{\frac{1}{p}}}\notag
           \\&\quad\times \bigg(\int_{\Gamma_I}{\bigg\{\varepsilon \mathtt{d}(s)\fint_0^{\varepsilon \mathtt{d}(s)}{\big\{\vert v(s+tk(s))\vert^{p-1}+\vert v(s)\vert^{p-1}\big\}\,\mathrm{d}t}\,\bigg\}^{\smash{p'}}\mathrm{d}s}\bigg)^{\smash{\frac{1}{p'}}}\label{eq:Lebesgue_boundary_limit.4}
           \\&\leq  2^{p-1}\bigg(\int_{\Gamma_I}{
          (\varepsilon \mathtt{d}(s))^{p-1} \int_0^{\varepsilon \mathtt{d}(s)}{\vert \nabla v(s+t k(s))\vert^p\tfrac{k(s)\cdot n(s)+tR_\varepsilon(s,t)}{\kappa- \varepsilon\|\mathtt{d}\|_{\infty,\Gamma_I}\|R_\varepsilon\|_{\infty,D_{I}^{\varepsilon}}} \,\mathrm{d}t }\,\mathrm{d}s}\bigg)^{\smash{\frac{1}{p}}}\notag
           \\&\quad\times \bigg(\int_{\Gamma_I}{(\varepsilon \mathtt{d}(s))^{\frac{1}{p-1}}\int_0^{\varepsilon \mathtt{d}(s)}{\big\{\vert v(s+tk(s))\vert^p
           \tfrac{k(s)\cdot n(s)+tR_\varepsilon(s,t)}{\kappa- \varepsilon\|\mathtt{d}\|_{\infty,\Gamma_I}\|R_\varepsilon\|_{\infty,D_{I}^{\varepsilon}}}+\vert v(s)\vert^p\big\}\,\mathrm{d}t}\,\mathrm{d}s}\bigg)^{\smash{\frac{1}{p'}}}\notag
            \\&\leq  2^{p-1}\smash{(\tfrac{\|\mathtt{d}\|_{\infty,\Gamma_I}\varepsilon}{\kappa- \varepsilon\|\mathtt{d}\|_{\infty,\Gamma_I}\|R_\varepsilon\|_{\infty,D_{I}^{\varepsilon}}}})^{\frac{1}{p'}}
           \|\nabla v\|_{p,\Sigma_{I}^{\varepsilon}}\notag \\[0.5mm]&\quad\times\smash{(\tfrac{\|\mathtt{d}\|_{\infty,\Gamma_I}\varepsilon}{\kappa- \varepsilon\|\mathtt{d}\|_{\infty,\Gamma_I}\|R_\varepsilon\|_{\infty,D_{I}^{\varepsilon}}})^{\smash{\frac{1}{p}}}}
            \big\{\|v\|_{p,\Sigma_{I}^{\varepsilon}}+\|\mathtt{d}\|_{\infty,\Gamma_I}\varepsilon\|v\|_{p,\Gamma_I}\big\}\,.\notag
    \end{align}
    Eventually, using that $\frac{1}{p}+\frac{1}{p'}=1$, we conclude that\vspace{-0.5mm}
    \begin{align*}
        \smash{\tfrac{1}{\varepsilon}\|F_\varepsilon\|_{1,\Gamma_I}}&\leq 2^{p-1}\smash{\tfrac{\|\mathtt{d}\|_{\infty,\Gamma_I}}{\kappa- \varepsilon\|\mathtt{d}\|_{\infty,\Gamma_I}\|R_\varepsilon\|_{\infty,D_{I}^{\varepsilon}}}}\big\{\|\nabla v\|_{p,\Sigma_{I}^{\varepsilon}}+\|v\|_{p,\Sigma_{I}^{\varepsilon}}+\|\mathtt{d}\|_{\infty,\Gamma_I}\varepsilon\|v\|_{p,\Gamma_I}\big\}
        \to 0\quad (\varepsilon\to 0^{+})\,,
    \end{align*}
    which, together with \eqref{eq:Lebesgue_boundary_limit.1} and $\sup_{\varepsilon\in (0,\varepsilon_0)}{\{\|R_\varepsilon\|_{D_{I}^{\varepsilon}}\}}<\infty$, yields the assertion.
    \end{proof}

    \subsection{Point-wise Poincar\'e inequality in thin boundary layers}\vspace{-0.5mm}
    
    \hspace{5mm}In the forthcoming analysis, we will frequently resort to the following point-wise Poincar\'e inequality for Sobolev functions  defined in the thin {insulating} layer
    $\Sigma_{I}^{\varepsilon}$ and vanishing on the interacting insulation boundary $\Gamma_I^\varepsilon$.\vspace{-0.5mm}\enlargethispage{5mm}

    \begin{lemma}\label{lem:poincare}
        Let  $v_\varepsilon\in H^1(\Sigma_{I}^{\varepsilon})$ with $v_\varepsilon=0$ a.e.\ on $\Gamma_I^\varepsilon$. Then, for a.e.\ $s\in \Gamma_I$ and $t\in [0,\varepsilon \mathtt{d}(s)]$, it holds that
        \begin{align*}
            \vert v_\varepsilon(s+tk(s))\vert^2\leq (\varepsilon \mathtt{d}(s)-t)
            \int_t^{\varepsilon \mathtt{d}(s)}{\vert \nabla v_\varepsilon (s+\lambda k(s))\vert^2\,\mathrm{d}\lambda}\,.
        \end{align*}
    \end{lemma}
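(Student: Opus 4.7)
The plan is to reduce the statement to a one-dimensional fundamental-theorem-of-calculus argument along the transversal direction $k(s)$, and then close with Cauchy--Schwarz. Since $\Phi_\varepsilon\colon D_I^\varepsilon\to \Sigma_I^\varepsilon$ is bi-Lipschitz for $\varepsilon\in(0,\varepsilon_0)$ (as invoked right before Lemma \ref{lem:approx_trans_formula}), the pull-back $\widetilde v_\varepsilon\coloneqq v_\varepsilon\circ \Phi_\varepsilon$ belongs to $H^1(D_I^\varepsilon)$, its partial derivative in the $t$-direction equals $(\nabla v_\varepsilon\circ\Phi_\varepsilon)\cdot k$ a.e.\ on $D_I^\varepsilon$, and the boundary trace condition $v_\varepsilon=0$ a.e.\ on $\Gamma_I^\varepsilon$ translates via $\Phi_\varepsilon$ into $\widetilde v_\varepsilon(s,\varepsilon h(s))=0$ for a.e.\ $s\in\Gamma_I$.

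By the ACL (absolute continuity on lines) characterization of Sobolev functions combined with Fubini, for a.e.\ $s\in\Gamma_I$ the map $\lambda\mapsto \widetilde v_\varepsilon(s,\lambda)=v_\varepsilon(s+\lambda k(s))$ is absolutely continuous on $[0,\varepsilon h(s)]$, with a.e.\ derivative $\nabla v_\varepsilon(s+\lambda k(s))\cdot k(s)$. Applying the Newton--Leibniz formula between $t$ and $\varepsilon h(s)$ and using the vanishing trace at $\lambda=\varepsilon h(s)$ yields
\begin{align*}
    v_\varepsilon(s+tk(s))=-\int_t^{\varepsilon h(s)}\nabla v_\varepsilon(s+\lambda k(s))\cdot k(s)\,\mathrm{d}\lambda\,.
\end{align*}

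Taking absolute values, exploiting $\vert k(s)\vert=1$ (unit-length transversal vector field, as stipulated in the introduction and in Section \ref{sec:modelling}), and applying the Cauchy--Schwarz inequality with respect to $\lambda\in[t,\varepsilon h(s)]$ directly gives the claimed estimate
\begin{align*}
    \vert v_\varepsilon(s+tk(s))\vert^2\leq (\varepsilon h(s)-t)\int_t^{\varepsilon h(s)}\vert \nabla v_\varepsilon(s+\lambda k(s))\vert^2\,\mathrm{d}\lambda\,.
\end{align*}
The only subtle point, and the step that requires a bit of care, is the identification of the trace: one must verify that the boundary value $v_\varepsilon=0$ on $\Gamma_I^\varepsilon$ (interpreted in the $H^{1/2}$-trace sense) corresponds, under the bi-Lipschitz change of variables $\Phi_\varepsilon$, to $\widetilde v_\varepsilon(s,\varepsilon h(s))=0$ for a.e.\ $s\in\Gamma_I$; this is standard but relies crucially on the bi-Lipschitz nature of $\Phi_\varepsilon$, which in turn relies on the transversality of $k$.
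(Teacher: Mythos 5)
Your argument is correct and follows essentially the same route as the paper: Newton--Leibniz along the transversal direction from $t$ to $\varepsilon h(s)$ exploiting the vanishing trace on $\Gamma_I^\varepsilon$, then $\vert k\vert=1$ and Cauchy--Schwarz (the paper phrases the last step as Jensen's inequality, which is equivalent here). Your extra comments on the ACL characterization and the identification of the trace under the bi-Lipschitz map $\Phi_\varepsilon$ merely spell out details the paper takes as understood.
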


    \begin{proof}
        Using the Newton--Leibniz formula and Jensen's inequality,
        for a.e.\ $s\hspace{-0.05em}\in \hspace{-0.05em}\Gamma_I$~and~${t\hspace{-0.05em}\in\hspace{-0.05em} [0,\varepsilon \mathtt{d}(s)]}$, using that $v_\varepsilon (s+\varepsilon \mathtt{d}(s) k(s))=0$ (since  $s+\varepsilon \mathtt{d}(s) k(s)\in \Gamma_I^\varepsilon$), we find that
        \begin{align*}
            \vert v_\varepsilon (s+t k(s))\vert^2
            &
            =\left\vert\int_t^{\varepsilon \mathtt{d}(s)}{\nabla v_\varepsilon (s+\lambda k(s))\cdot k(s)\,\mathrm{d}\lambda}\right\vert^2
            \\&\leq (\varepsilon \mathtt{d}(s)-t)
            \int_t^{\varepsilon \mathtt{d}(s)}{\vert \nabla v_\varepsilon (s+\lambda k(s))\cdot k(s)\vert^2\,\mathrm{d}\lambda}\,,
        \end{align*}
        which, using that $\vert k\vert=1$ a.e.\ on $\Gamma_I$, yields the claimed point-wise Poincar\'e inequality.
    \end{proof}

    \subsection{Equi-coercivity}\vspace{-0.5mm}

    \hspace{5mm}The family of functionals $\smash{E}_\varepsilon^\mathtt{d}\colon H^1(\Omega_{I}^{\varepsilon})\to\mathbb{R}\cup\{+\infty\}$, $\varepsilon\in (0,\varepsilon_0)$, is equi-coercive.\vspace{-0.5mm}

    \begin{lemma}\label{lem:equicoercive}
    For a sequence $v_\varepsilon\in H^1(\Omega_{I}^{\varepsilon})$, $\varepsilon\in (0,\varepsilon_0)$, from 
   \begin{align*}
      \sup_{\varepsilon\in (0,\varepsilon_0)}{\big\{\smash{E}_\varepsilon^{\mathtt{d}}(v_\varepsilon)\big\}}<+\infty  \,,
   \end{align*}
   it~follows~that
        \begin{align*}
         \sup_{\varepsilon\in (0,\varepsilon_0)}{\big\{\|v_\varepsilon\|_{\Omega}^2+\|\nabla v_\varepsilon\|_{\Omega}^2+\tfrac{1}{\varepsilon}\|v_\varepsilon\|_{\Sigma_{I}^{\varepsilon}}^2+\varepsilon\|\nabla v_\varepsilon\|_{\Sigma_{I}^{\varepsilon}}^2\big\}}<+\infty \,.
        \end{align*}
    \end{lemma}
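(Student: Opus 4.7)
The plan is to first exploit the indicator functionals in $E_\varepsilon^h$ to enforce the boundary identifications $v_\varepsilon=u_D$ on $\Gamma_D$ and, crucially, $v_\varepsilon=0$ on $\Gamma_I^\varepsilon$, and then to use the vanishing trace on $\Gamma_I^\varepsilon$ via the point-wise Poincar\'e inequality (Lemma \ref{lem:poincare}) together with the approximative transformation formula (Lemma \ref{lem:approx_trans_formula}) to control the $L^2$-norms $\|v_\varepsilon\|_{\Gamma_I}$ and $\|v_\varepsilon\|_{\Sigma_I^\varepsilon}$ in terms of $\sqrt{\varepsilon}\,\|\nabla v_\varepsilon\|_{\Sigma_I^\varepsilon}$. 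Friedrich's inequality \eqref{lem:poin_cont} will then transfer the $\Gamma_I$-bound into an $\Omega$-bound on $v_\varepsilon$, after which the linear functionals $(f,v_\varepsilon)_\Omega$ and $\langle g,v_\varepsilon\rangle_{H^{\smash{\frac{1}{2}}}(\Gamma_N)}$ can be absorbed via Young's inequality into the two gradient terms of $E_\varepsilon^h$.

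Concretely, applying Lemma \ref{lem:poincare} at $t=0$ and integrating in $s\in\Gamma_I$, then invoking Lemma \ref{lem:approx_trans_formula} with $\varepsilon_0$ shrunk so that $k\cdot n+tR_\varepsilon\ge \kappa/2$ on $D_I^\varepsilon$, would yield
\begin{align*}
\|v_\varepsilon\|_{\Gamma_I}^2 \le \tfrac{2\|h\|_{\infty,\Gamma_I}}{\kappa}\,\varepsilon\|\nabla v_\varepsilon\|_{\Sigma_I^\varepsilon}^2.
\end{align*}
Integrating the full point-wise estimate of Lemma \ref{lem:poincare} also in $t\in[0,\varepsilon h(s)]$, a Fubini swap in the variables $(t,\lambda)$ together with the bound $\varepsilon h(s)-t\le \varepsilon\|h\|_{\infty,\Gamma_I}$ gives
\begin{align*}
\int_0^{\varepsilon h(s)}\vert v_\varepsilon(s+tk(s))\vert^2\,\mathrm{d}t \le (\varepsilon\|h\|_{\infty,\Gamma_I})^2\int_0^{\varepsilon h(s)}\vert\nabla v_\varepsilon(s+\lambda k(s))\vert^2\,\mathrm{d}\lambda,
\end{align*}
which, after Lemma \ref{lem:approx_trans_formula} is applied to both sides, would produce $\tfrac{1}{\varepsilon}\|v_\varepsilon\|_{\Sigma_I^\varepsilon}^2\le C\,\varepsilon\|\nabla v_\varepsilon\|_{\Sigma_I^\varepsilon}^2$ with a constant $C$ independent of $\varepsilon\in(0,\varepsilon_0)$.

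With the abbreviations $A\coloneqq\|\nabla v_\varepsilon\|_\Omega^2$ and $B\coloneqq\varepsilon\|\nabla v_\varepsilon\|_{\Sigma_I^\varepsilon}^2$, Friedrich's inequality \eqref{lem:poin_cont} combined with the first display and $\vert\langle v_\varepsilon\rangle_{\Gamma_I}\vert\le\vert\Gamma_I\vert^{-1/2}\|v_\varepsilon\|_{\Gamma_I}$ yields $\|v_\varepsilon\|_\Omega\le C(A^{1/2}+B^{1/2})$. The standard trace embedding $H^1(\Omega)\hookrightarrow H^{\smash{\frac{1}{2}}}(\Gamma_N)$ then bounds $\vert\langle g,v_\varepsilon\rangle_{H^{\smash{\frac{1}{2}}}(\Gamma_N)}\vert$ and $\vert(f,v_\varepsilon)_\Omega\vert$ by constant multiples of $A^{1/2}+B^{1/2}$. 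Plugging these bounds into $\tfrac12 A+\tfrac12 B\le E_\varepsilon^h(v_\varepsilon)+(f,v_\varepsilon)_\Omega+\langle g,v_\varepsilon\rangle_{H^{\smash{\frac{1}{2}}}(\Gamma_N)}$ and absorbing the half-powers via Young's inequality into $\tfrac14(A+B)$ would give $A+B\le C'$; combining this with the two thin-layer estimates from the second paragraph closes the argument.

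The main obstacle is the scale-matching: the natural bound $\|v_\varepsilon\|_{\Gamma_I}\le C\sqrt{\varepsilon}\,\|\nabla v_\varepsilon\|_{\Sigma_I^\varepsilon}$ carries a half-power of $\varepsilon$ and so must be paired with the weighted term $\varepsilon\|\nabla v_\varepsilon\|_{\Sigma_I^\varepsilon}^2$ in $E_\varepsilon^h$ rather than with $\|\nabla v_\varepsilon\|_\Omega^2$. The role of Lemma \ref{lem:approx_trans_formula} is precisely to guarantee, through the uniform bound $\sup_{\varepsilon\in(0,\varepsilon_0)}\|R_\varepsilon\|_{\infty,D_I^\varepsilon}<+\infty$ and the transversality lower bound $k\cdot n\ge\kappa$, that all constants remain independent of $\varepsilon\in(0,\varepsilon_0)$.
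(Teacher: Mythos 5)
Your proposal is correct and reproduces the paper's own argument: the pointwise Poincar\'e inequality (Lemma \ref{lem:poincare}) and the approximative transformation formula (Lemma \ref{lem:approx_trans_formula}) give the thin-layer bounds $\|v_\varepsilon\|_{\Gamma_I}^2 \lesssim \varepsilon\|\nabla v_\varepsilon\|_{\Sigma_I^\varepsilon}^2$ and $\tfrac{1}{\varepsilon}\|v_\varepsilon\|_{\Sigma_I^\varepsilon}^2 \lesssim \varepsilon\|\nabla v_\varepsilon\|_{\Sigma_I^\varepsilon}^2$, which are then fed through Friedrich's inequality and the trace estimate to absorb the linear terms in $E_\varepsilon^h$ via Young's inequality. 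The only (harmless) difference is cosmetic: you shrink $\varepsilon_0$ to enforce the uniform lower bound $k\cdot n + tR_\varepsilon \ge \kappa/2$ on the Jacobian factor, while the paper carries the exact constant $\kappa - \varepsilon\|h\|_{\infty,\Gamma_I}\|R_\varepsilon\|_{\infty,D_I^\varepsilon}$ and observes afterwards that $\sup_{\varepsilon\in(0,\varepsilon_0)}\{1/\delta_\varepsilon\}<+\infty$.
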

 
    \begin{proof} We proceed similarly to \cite[Thm III.3]{AcerbiButtazzo1986}. 
        Let $c_E\coloneqq \sup_{\varepsilon\in (0,\varepsilon_0)}{\{\smash{E}_\varepsilon^{\mathtt{d}}(v_\varepsilon)\}}>0$. To~begin~with,  we observe that $v_\varepsilon=0$ a.e.\ on $\Gamma_I^{\varepsilon}$, $v_\varepsilon=u_D$ a.e.\ on $\Gamma_D$, and, by the weighted Young's inequality, for every $\delta>0$, that
        \begin{align}\label{lem:equicoercive.1}
            \tfrac{1}{2}\|\nabla v_\varepsilon\|_{\Omega}+\tfrac{\varepsilon}{2}\|\nabla v_\varepsilon\|_{\Sigma_{I}^{\varepsilon}}\leq c_E+\tfrac{1}{2\delta}\big\{\|f\|_{\Omega}^2+\| g\|_{\smash{(H^{\smash{\frac{1}{2}}}(\Gamma_N))^*}}^2\big\}+\tfrac{\delta}{2}\big\{\|v_\varepsilon\|_{\Omega}^2+\| v_\varepsilon\|_{\smash{H^{\smash{\frac{1}{2}}}(\Gamma_N)}}^2\big\}\,.
        \end{align}
        Using 
        \hspace{-0.15mm}the \hspace{-0.15mm}approximative \hspace{-0.15mm}transformation \hspace{-0.15mm}formula \hspace{-0.15mm}(\textit{cf}.\ \hspace{-0.15mm}Lemma \hspace{-0.15mm}\ref{lem:approx_trans_formula}), \hspace{-0.15mm}the \hspace{-0.15mm}point-wise \hspace{-0.15mm}Poincar\'e~\hspace{-0.15mm}\mbox{inequality} (\textit{cf}.\ Lemma \ref{lem:poincare}), and that $k(s)\cdot n(s)+\tau R_\varepsilon(s,\tau)\ge \kappa- \varepsilon\|\mathtt{d}\|_{\infty,\Gamma_I}\|R_\varepsilon\|_{\infty,D_{I}^{\varepsilon}}$ for a.e. $\smash{(\tau,s)^\top}\in D_I^{\varepsilon}$ together with the approximative transformation formula (\textit{cf}.\ Lemma \ref{lem:approx_trans_formula}),~we~obtain
         \begin{align}
            \|v_\varepsilon\|_{\Sigma_{I}^{\varepsilon}}^2&= \int_{\Gamma_I}{
            \int_0^{\varepsilon \mathtt{d}(s)}{\vert v_\varepsilon (s+t k(s))\vert^2\big\{k(s)\cdot n(s)+tR_\varepsilon(s,t)\big\}\,\mathrm{d}t}\,\mathrm{d}s}\notag
            \\&\leq  \int_{\Gamma_I}
            \int_0^{\varepsilon \mathtt{d}(s)}\bigg\{(\varepsilon \mathtt{d}(s)-t)
            \int_t^{\varepsilon \mathtt{d}(s)}{\vert \nabla v_\varepsilon (s+\tau k(s))\vert^2\,\mathrm{d}\tau}\bigg\}\big\{k(s)\cdot n(s)+tR_\varepsilon(s,t)\big\}\,\mathrm{d}t\,\mathrm{d}s\notag
            \\&\leq  \varepsilon^2\|\mathtt{d}\|_{\infty,\Gamma_I}^2\big\{1+\varepsilon\|R_\varepsilon\|_{\infty,D_{I}^{\varepsilon}}\big\}\int_{\Gamma_I}{
            \int_0^{\varepsilon \mathtt{d}(s)}{\vert \nabla v_\varepsilon (s+\tau k(s))\vert^2\,\mathrm{d}\tau}\,\mathrm{d}s}\label{lem:equicoercive.2}
            \\&\leq  \varepsilon^2\|\mathtt{d}\|_{\infty,\Gamma_I}^2\big\{1+\varepsilon\|R_\varepsilon\|_{\infty,D_{I}^{\varepsilon}}\big\}\int_{\Gamma_I}{
            \int_0^{\varepsilon \mathtt{d}(s)}{\vert \nabla v_\varepsilon (s+\tau k(s))\vert^2\tfrac{k(s)\cdot n(s)+\tau R_\varepsilon(s,\tau)}{\kappa-\varepsilon\|\mathtt{d}\|_{\infty,\Gamma_I}\|R_\varepsilon\|_{\infty,D_{I}^{\varepsilon}}}\,\mathrm{d}\tau}\,\mathrm{d}s}\notag
            \\&\leq  \varepsilon^2\|\mathtt{d}\|_{\infty,\Gamma_I}^2
            \tfrac{1+\varepsilon\|\mathtt{d}\|_{\infty,\Gamma_I}\|R_\varepsilon\|_{\infty,D_{I}^{\varepsilon}}}{\kappa-\varepsilon\|\mathtt{d}\|_{\infty,\Gamma_I}\|R_\varepsilon\|_{\infty,D_{I}^{\varepsilon}}} 
            \|\nabla v_\varepsilon \|_{\Sigma_{I}^{\varepsilon}}^2\,,\notag
        \end{align}
        where $R_\varepsilon\in  L^\infty(D_{I}^{\varepsilon})$, $\varepsilon\in (0,\varepsilon_0)$, is as in Lemma \ref{lem:approx_trans_formula}.\newpage
        \noindent Using the point-wise Poincar\'e inequality (\textit{cf}.\ Lemma \ref{lem:poincare})
        and that $k(s)\cdot n(s)+t R_\varepsilon(s,t)\ge \kappa- \varepsilon\|\mathtt{d}\|_{\infty,\Gamma_I}\|R_\varepsilon\|_{\infty,D_{I}^{\varepsilon}}$ for a.e. $\smash{(t,s)^\top}\in D_I^{\varepsilon}$ together with the approximative transformation~formula (\textit{cf}.\ Lemma \ref{lem:approx_trans_formula}), we obtain
        \begin{align}\label{lem:equicoercive.3}
                 \|v_\varepsilon\|_{\Gamma_I}^2&\leq \int_{\Gamma_I}{(\varepsilon \mathtt{d}(s)-t)
            \int_t^{\varepsilon \mathtt{d}(s)}{\vert \nabla v_\varepsilon (s+t k(s))\vert^2\,\mathrm{d}t}\,\mathrm{d}s}
            \\&\leq \varepsilon\|\mathtt{d}\|_{\infty,\Gamma_I}\int_{\Gamma_I}{
            \int_0^{\varepsilon \mathtt{d}(s)}{\vert \nabla v_\varepsilon (s+t k(s))\vert^2\tfrac{k(s)\cdot n(s)+tR_\varepsilon(s,t)}{\kappa-\varepsilon \|\mathtt{d}\|_{\infty,\Gamma_I}\|R_\varepsilon\|_{\infty,D_{I}^{\varepsilon}}} \,\mathrm{d}t}\,\mathrm{d}s}\notag 
            \\&= \smash{\tfrac{\varepsilon\|\mathtt{d}\|_{\infty,\Gamma_I}}{\kappa-\varepsilon \|\mathtt{d}\|_{\infty,\Gamma_I}\|R_\varepsilon\|_{\infty,D_{I}^{\varepsilon}}}}\|\nabla v_\varepsilon\|_{\Sigma_{I}^{\varepsilon}}^2\,.\notag 
        \end{align}
        Then, using Friedrich's inequality \eqref{lem:poin_cont}, Hölder's inequality, and \eqref{lem:equicoercive.3}, we infer that
        \begin{align}\label{lem:equicoercive.4}
            \begin{aligned}
            \|v_\varepsilon\|_{\Omega}^2&\leq c_{\textrm{F}}\,\big\{\|\nabla v_\varepsilon\|_{\Omega}^2+\tfrac{1}{\vert \Gamma_I\vert}\| v_\varepsilon\|_{\Gamma_I}^2\big\}
            \\&
            \leq c_{\textrm{F}}\,\big\{\|\nabla v_\varepsilon\|_{\Omega}^2+\tfrac{1}{\vert \Gamma_I\vert}\smash{\tfrac{\varepsilon\|\mathtt{d}\|_{\infty,\Gamma_I}}{\kappa-\varepsilon \|\mathtt{d}\|_{\infty,\Gamma_I}\|R_\varepsilon\|_{\infty,D_{I}^{\varepsilon}}}}\|\nabla v_\varepsilon\|_{\Sigma_{I}^{\varepsilon}}^2\big\}\,.
            \end{aligned}
        \end{align}
        Moreover, using the trace theorem (\textit{cf}.~\cite[Thm.~II.4.3]{Galdi}) and \eqref{lem:equicoercive.4}, we infer that
        \begin{align}\label{lem:equicoercive.5}
            \begin{aligned}
            \| v_\varepsilon\|_{\smash{H^{\smash{\frac{1}{2}}}(\Gamma_N)}}^2&\leq c_{\textrm{Tr}}\,\big\{\|\nabla v_\varepsilon\|_{\Omega}^2+
            \|v_\varepsilon\|_{\Omega}^2\big\}
            \\&\leq c_{\textrm{Tr}}\,\big\{(1+c_{\textrm{F}})\,\|\nabla v_\varepsilon\|_{\Omega}^2+c_{\textrm{F}}\tfrac{1}{\vert \Gamma_I\vert}\smash{\tfrac{\varepsilon\|\mathtt{d}\|_{\infty,\Gamma_I}}{\kappa-\varepsilon \|\mathtt{d}\|_{\infty,\Gamma_I}\|R_\varepsilon\|_{\infty,D_{I}^{\varepsilon}}}}\|\nabla v_\varepsilon\|_{\Sigma_{I}^{\varepsilon}}^2\big\}\,.
            \end{aligned}
        \end{align}
        In summary, using \eqref{lem:equicoercive.4} and \eqref{lem:equicoercive.5} in \eqref{lem:equicoercive.1}, for every $\delta>0$, we arrive at
        \begin{align}\label{lem:equicoercive.6}
            \tfrac{1}{2}\|\nabla v_\varepsilon\|_{\Omega}^2+\tfrac{\varepsilon}{2}\|\nabla v_\varepsilon\|_{\Sigma_{I}^{\varepsilon}}^2&\leq c_E+\tfrac{1}{2\delta}\big\{\|f\|_{\Omega}^2+\| g\|_{\smash{(H^{\smash{\frac{1}{2}}}(\Gamma_N))^*}}^2\big\}\\&\quad+\tfrac{\delta}{2}\max\big\{c_{\textrm{F}}+c_{\textrm{Tr}}\,(1+c_{\textrm{F}}),(1+c_{\textrm{Tr}})c_{\textrm{F}}\tfrac{1}{\vert \Gamma_I\vert}\smash{\tfrac{\|\mathtt{d}\|_{\infty,\Gamma_I}}{\kappa-\varepsilon \|\mathtt{d}\|_{\infty,\Gamma_I}\|R_\varepsilon\|_{\infty,D_{I}^{\varepsilon}}}}\big\}\notag
            \\&\quad\quad\times\big\{
           \|\nabla v_\varepsilon\|_{\Omega}^2
            +\varepsilon\|\nabla v_\varepsilon\|_{\Sigma_{I}^{\varepsilon}}^2\big\}\,.\notag
        \end{align}
        Then, choosing in \eqref{lem:equicoercive.6}
        \begin{align*}
            \tfrac{1}{\delta}=\tfrac{1}{\delta_\varepsilon}\coloneqq 2\max\big\{c_{\textrm{F}}+c_{\textrm{Tr}}\,(1+c_{\textrm{F}}),(1+c_{\textrm{Tr}})c_{\textrm{F}}\tfrac{1}{\vert \Gamma_I\vert}\tfrac{\|\mathtt{d}\|_{\infty,\Gamma_I}}{\kappa-\varepsilon \|\mathtt{d}\|_{\infty,\Gamma_I}\|R_\varepsilon\|_{\infty,D_{I}^{\varepsilon}}}\big\}
        >0\,,
        \end{align*}
        we conclude that
        \begin{align}\label{lem:equicoercive.7}
        \|\nabla v_\varepsilon\|_{\Omega}^2+\varepsilon\|\nabla v_\varepsilon\|_{\Sigma_{I}^{\varepsilon}}^2
        \leq 4c_E+\tfrac{2}{\delta_\varepsilon}\big\{\|f\|_{\Omega}^2+\| g\|_{\smash{(H^{\smash{\frac{1}{2}}}(\Gamma_N))^*}}^2\big\} \,.
        \end{align}
        Eventually, using \eqref{lem:equicoercive.7} together with $\sup_{\varepsilon\in (0,\varepsilon_0)}{\{\frac{1}{\delta_\varepsilon}\}}<+\infty$ in both \eqref{lem:equicoercive.2} and \eqref{lem:equicoercive.4},~we~conclude that 
        the claimed equi-coercivity property applies.
    \end{proof}

    \subsection{Transversal distance function}

    \hspace{5mm}In order to prove the $\limsup$-estimate in the later $\Gamma$-convergence result, we need to measure the distance of points in the thin {insulating} layer $\Sigma_{I}^{\varepsilon}$ to the {insulated} boundary $\Gamma_I$ with respect to the Lipschitz continuous (globally) transversal vector field $k\in (C^{0,1}(\partial\Omega))^d$.
    
    \begin{lemma}\label{lem:transversal_distance_function}
        For each $\varepsilon\in (0,\varepsilon_0)$, let the \emph{transversal distance function} $\psi_\varepsilon\colon \Sigma_{I}^{\varepsilon}\to [0,\varepsilon \|\mathtt{d}\|_{\infty,\Gamma_I})$, for every $x=s+tk(s)\in \Sigma_{I}^{\varepsilon}$, where $s\in \Gamma_I$ and $t\in [0,\varepsilon \mathtt{d}(s))$, be defined by\enlargethispage{7.5mm}
        \begin{align*}
            \psi_\varepsilon(x)\coloneqq t\,.
        \end{align*}
        Then, we have that $\psi_\varepsilon\in H^{1,\infty}(\Sigma_{I}^{\varepsilon})$ with  $\psi_\varepsilon=0$ a.e.\ on $\Gamma_I$ and 
        \begin{subequations} 
        \begin{align}\label{lem:transversal_distance_function.1}
        \|\psi_\varepsilon \|_{\infty,\Sigma_{I}^{\varepsilon}}&\leq \varepsilon\|\mathtt{d}\|_{\infty,\Gamma_I}\,,\\
            \nabla \psi_\varepsilon(x)&= \tfrac{1}{k(s)\cdot n(s)}n(s)+tR_\varepsilon(x)\quad \text{ for a.e.\ }x=s+tk(s)\in \Sigma_{I}^{\varepsilon}\,,\label{lem:transversal_distance_function.2}
        \end{align}
         \end{subequations}
        where $R_\varepsilon\in (L^\infty(\Sigma_{I}^{\varepsilon}))^d$, $\varepsilon\in (0,\varepsilon_0)$, depend only on the Lipschitz characteristics of $\Gamma_I$ and satisfy $\sup_{\varepsilon\in (0,\varepsilon_0)}{\{\|R_\varepsilon\|_{\infty,D_{I}^{\varepsilon}}\}}<+\infty$.
    \end{lemma}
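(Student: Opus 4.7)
My plan is to prove the lemma locally, using the chart maps $F_\varepsilon^{i}$ already constructed in the proof of Lemma~\ref{lem:approx_trans_formula}, and to exploit the trivial identity $\psi_\varepsilon \circ F_\varepsilon^{i}(\overline{x},t)=t$. Since the discussion preceding \eqref{intro:phi_eps} guarantees that $\Phi_\varepsilon$ (and hence each $F_\varepsilon^{i}$) is bi-Lipschitz for $\varepsilon\in(0,\varepsilon_0)$, writing $\psi_\varepsilon = e_d^\top(F_\varepsilon^{i})^{-1}$ on each chart immediately gives $\psi_\varepsilon\in H^{1,\infty}(\Sigma_{I}^{\varepsilon})$, the vanishing $\psi_\varepsilon=0$ on $\Gamma_I$ (corresponding to $t=0$), and the sup-norm bound \eqref{lem:transversal_distance_function.1} directly from the definition.

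The substance of the proof is the pointwise gradient formula \eqref{lem:transversal_distance_function.2}. By Rademacher's theorem and the chain rule, at any point of differentiability $x=F_\varepsilon^{i}(\overline{x},t)$ it holds that $\nabla \psi_\varepsilon(x) = (DF_\varepsilon^{i}(\overline{x},t))^{-\top} e_d$. From \eqref{lem:approx_trans_formula.3} I can write $DF_\varepsilon^{i}(\overline{x},t) = M_0(\overline{x}) + t M_1(\overline{x})$, where $M_0$ is the $t$-independent block involving $k$, $\gamma_i$, and $O_i$, while $M_1$ carries the $Dk(s_i(\overline{x}))$ factor. For $\varepsilon$ small (shrinking $\varepsilon_0$ if needed), a uniform Neumann expansion yields $(DF_\varepsilon^{i})^{-1} = M_0^{-1} + t\widetilde{R}_\varepsilon$ with $\widetilde{R}_\varepsilon$ bounded in terms of the Lipschitz characteristics of $\Gamma_I$ and $k$; pushing this back through $F_\varepsilon^{i}$ and composing with $e_d^\top$ produces a remainder $R_\varepsilon\in(L^\infty(\Sigma_{I}^{\varepsilon}))^d$ with $\sup_{\varepsilon\in(0,\varepsilon_0)}\|R_\varepsilon\|_{\infty,\Sigma_I^\varepsilon}<+\infty$.

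It remains to identify $e_d^\top M_0^{-1} = \tfrac{1}{k\cdot n}\,n^\top$. Writing $M_0 = O_i B$ with $B = \bigl(\begin{smallmatrix} I & v'\\ \nabla\gamma_i^\top & v_d\end{smallmatrix}\bigr)$ and $v \coloneqq O_i^\top k(s_i(\overline{x})) = (v',v_d)^\top$, a Schur-complement computation shows that the last row of $B^{-1}$ equals $S^{-1}(-\nabla\gamma_i^\top,\,1)$, with Schur complement $S = v_d - \nabla\gamma_i^\top v'$. The identity $O_i^\top n(s_i(\overline{x})) = J_{\gamma_i}^{-1}(\nabla\gamma_i^\top,-1)^\top$ from \eqref{lem:approx_trans_formula.1}, together with orthogonality of $O_i$, gives on the one hand $S = -J_{\gamma_i}\,(k\cdot n)$, and on the other hand $(-\nabla\gamma_i^\top,\,1)O_i^\top = -J_{\gamma_i}\,n^\top$; the two $J_{\gamma_i}$ factors cancel, producing $e_d^\top M_0^{-1} = \tfrac{1}{k\cdot n}\,n^\top$, as claimed.

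The main obstacle is purely computational: the block-matrix inversion of $M_0$ must be reconciled with the normal formula \eqref{lem:approx_trans_formula.1} so that the final expression depends only on the intrinsic quantity $k\cdot n$ and is independent of the chart index $i$. One must also verify that the Neumann expansion produces a remainder of size $O(t)$ rather than $O(\varepsilon)$, which is crucial for the pointwise formula to hold at a.e.\ $x=s+tk(s)$; this relies on $\|M_1 M_0^{-1}\|_\infty$ being controlled by $\|Dk\|_{\infty,\partial\Omega}$ and the Lipschitz characteristics of the graph charts $\gamma_i$, both of which are $\varepsilon$-independent.
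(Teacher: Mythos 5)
Your proposal is correct and follows essentially the same route as the paper: localize via the chart maps $F_\varepsilon^i$ and Rademacher's theorem, invert $\mathrm{D}F_\varepsilon^i$ by block/Schur-complement arithmetic to extract the bottom row of the inverse, identify it as $\tfrac{1}{k\cdot n}n^\top$ through the normal formula \eqref{lem:approx_trans_formula.1}, and factor out $t$ in the remainder. Your version streamlines the presentation slightly by writing $\psi_\varepsilon = e_d^\top(F_\varepsilon^i)^{-1}$ directly instead of going through $\pi_{d+1}\circ\Phi_\varepsilon^{-1}$ as in the paper, and by making the Neumann-series structure of the $O(t)$ remainder explicit (the paper cites a block-inversion formula and asserts the bound), but the substance of the argument is identical.
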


    \newpage
    \begin{proof}
     The estimate  \eqref{lem:transversal_distance_function.1} is evident. For the representation \eqref{lem:transversal_distance_function.2}, we first observe that
        \begin{align*}
            \psi_\varepsilon= \pi_{d+1}\circ\Phi_\varepsilon^{-1}\quad\text{ in }\Sigma_{I}^{\varepsilon}\,,
        \end{align*}
         where $\pi_{d+1}\colon \mathbb{R}^{d+1}\to \mathbb{R}$,  for every $x=(x_1,\ldots,x_{d+1})^\top  \in \mathbb{R}^{d+1}$, defined by $\pi_{d+1}(x)\coloneqq x_{d+1}$, denotes the projection onto the $(d+1)$-th component.
        Since $\Phi_\varepsilon\colon \hspace{-0.1em}D_{I}^{\varepsilon}\hspace{-0.1em}\to \hspace{-0.1em}\Sigma_{I}^{\varepsilon}$ is bi-Lipschitz~\mbox{continuous} and $\pi_{d+1}\colon \hspace{-0.15em}\mathbb{R}^{d+1}\hspace{-0.15em}\to \hspace{-0.15em}\mathbb{R}$ is Lipschitz continuous, 
        $\psi_\varepsilon\colon \hspace{-0.15em}\Sigma_{I}^{\varepsilon}\hspace{-0.15em}\to\hspace{-0.15em} [0,\varepsilon \|\mathtt{d}\|_{\infty,\Gamma_I})$ is Lipschtz~\mbox{continuous}~as~well. Hence, by Rademacher's theorem (\textit{cf}.\ \cite[Thm.\ 2.14]{AFP2000}), we have that $\psi_\varepsilon\in H^{1,\infty}(\Sigma_{I}^{\varepsilon})$.~\mbox{Moreover}, as in the proof of Lemma \ref{lem:approx_trans_formula},
         there exist some $r>0$ and a finite number $N\in \mathbb{N}$ of affine isometric mappings $A_i\colon \mathbb{R}^d\to \mathbb{R}^d$, $i=1,\ldots,N$, (\textit{i.e.}, $\mathrm{D}A_i\equiv O_i\in \mathrm{O}(d)$~for~all~$i=1,\ldots,N$) and Lipschitz  mappings
        $\gamma_i\colon  B_r\coloneqq B_r^{d-1}(0)\to \mathbb{R}$, $i=1,\ldots,N$,~such~that~$ \Gamma_I=\bigcup_{i=1}^N{A_i(\mathrm{graph}(\gamma_i))}$.~Let~$i\in \{1,\ldots,N\}$ be arbitrary and 
        let  $F_\varepsilon^{i}\colon U_\varepsilon^{i}\to \Sigma_{I}^{\varepsilon}$~be~defined~by~\eqref{lem:approx_trans_formula.2}. Then, for a.e.\ $(\overline{x},t)^\top\in U_\varepsilon^{i}$, we observe that\enlargethispage{7.5mm} 
        \begin{align} 
            \begin{aligned} 
            \left[\begin{array}{c|c}
                O_i & 0_d \\  \cmidrule(){1-2}
                0_d^\top \vphantom{X^{X^X}} & 1
            \end{array}\right]\left[\begin{array}{c|c}
                \mathrm{I}_{(d-1)\times (d-1)} & \multirow{2}{*}{$0_d$} \\\cmidrule(){1-1}
                \nabla \gamma_i(\overline{x})^\top\vphantom{X^{X^X}} & \\ \cmidrule(){1-2}
                0_d^\top \vphantom{X^{X^X}} & 1
            \end{array}\right]&=\mathrm{D}(\Phi_\varepsilon^{-1}\circ F_\varepsilon^{i})(\overline{x},t)\\&=
            \mathrm{D}\Phi_\varepsilon^{-1}(F_\varepsilon^{i}(\overline{x},t))\mathrm{D}F_\varepsilon^{i}(\overline{x},t)\,. \end{aligned}\label{lem:transversal_distance_function.3}
        \end{align}
        In addition, for a.e.\ $(\overline{x},t)^\top\in U_\varepsilon^{i}$,  employing the abbreviations
        \begin{align*}
            s_i(\overline{x})&\coloneqq A_i(\overline{x},\gamma_{i}(\overline{x}))\,,\\
            d_i(\overline{x})&\coloneqq -(k(s_i(\overline{x}))\cdot n(s_i(\overline{x})))J_{\gamma_i}(\overline{x})\,,\\
            \overline{k}_i(\overline{x})&\coloneqq \pi_{\mathbb{R}^d}(O^\top_i k(s_i(\overline{x})))\,,
        \end{align*} 
        where $\pi_{\mathbb{R}^d}\colon \hspace{-0.1em}\mathbb{R}^{d+1}\hspace{-0.1em}\to\hspace{-0.1em} \mathbb{R}^d$, for every $x\hspace{-0.1em}=\hspace{-0.1em}(x_1,\ldots,x_{d+1})^\top \hspace{-0.1em} \in\hspace{-0.1em} \mathbb{R}^{d+1}$,~defined~by $\pi_{d+1}(x)\coloneqq (x_1,\ldots,x_d)^\top$ in~$\mathbb{R}^d$, 
        denotes the projection~onto~$\mathbb{R}^d$, for a.e.\ $(\overline{x},t)^\top\in U_\varepsilon^{i}$, from \eqref{lem:approx_trans_formula.3}, using the representation of inverse of block matrices (\textit{cf}.\ \cite[Thm.\ 2.1]{LuShiou2002}), we deduce that
        \begin{align}\label{lem:transversal_distance_function.4}
        \begin{aligned} 
            (\mathrm{D}F_\varepsilon^{i}(\overline{x},t))^{-1}&=\frac{1}{d_i(\overline{x})}
            \left[\begin{array}{c|c}
                d_i(\overline{x})\mathrm{I}_{(d-1)\times (d-1)} +\overline{k}_i(\overline{x})\otimes \nabla \gamma_{i}(\overline{x}) & -\overline{k}_i(\overline{x}) \\\cmidrule(){1-2}
               - \nabla \gamma_{i}(\overline{x})^\top\vphantom{X^{X^X}} & 1
            \end{array}\right]O_i^{\top}\\&
            \quad+t\,R_\varepsilon^{i}(\overline{x},t)\,,
            \end{aligned}
        \end{align}
        where $R_\varepsilon^{i} \in (L^\infty(U_\varepsilon^{i}))^{d\times d}$, $\varepsilon\in (0,\varepsilon_0)$, depends only on the Lipschitz characteristics of $\Gamma_I$ and satisfies $\sup_{\varepsilon\in (0,\varepsilon_0)}{\{\|R_\varepsilon^{i}\|_{\infty,U_\varepsilon^{i}}\}}<+\infty$.
        For a.e.\ $(\overline{x},t)^\top\in U_\varepsilon^{i}$, using \eqref{lem:transversal_distance_function.4}~in~\eqref{lem:transversal_distance_function.3},~we~infer~that
        \begin{align}\label{lem:transversal_distance_function.5}
            \mathrm{D}\Phi_\varepsilon^{-1}(F_\varepsilon^{i}(\overline{x},t))&=\left[\begin{array}{c|c}
                O_i & 0_d \\  \cmidrule(){1-2}
                0_d^\top \vphantom{X^{X^X}} & 1
            \end{array}\right]\left[\begin{array}{c|c}
                \mathrm{I}_{(d-1)\times (d-1)} & \multirow{2}{*}{$0_d$} \\\cmidrule(){1-1}
                \nabla \gamma_{i}(\overline{x})^\top\vphantom{X^{X^X}} & \\ \cmidrule(){1-2}
                0_d^\top \vphantom{X^{X^X}} & 1
            \end{array}\right]\\&\quad\times\frac{1}{d_i(\overline{x})}\left[\begin{array}{c|c}
                d_i(\overline{x})\mathrm{I}_{(d-1)\times (d-1)} +\overline{k}_i(\overline{x})\otimes \nabla \gamma_{i}(\overline{x}) & -\overline{k}_i(\overline{x}) \\\cmidrule(){1-2}
               - \nabla \gamma_{i}(\overline{x})^\top\vphantom{X^{X^X}} & 1
            \end{array}\right]O_i^{\top}+t\,\widetilde{R}_\varepsilon^{i}(\overline{x},t)\,,\notag
        \end{align}
        where $\smash{\widetilde{R}}_\varepsilon^{i}\in (L^\infty(U_\varepsilon^{i}))^{(d+1)\times d}$, $\varepsilon\in (0,\varepsilon_0)$, depends only on the Lipschitz characteristics~of~$\Gamma_I$~and satisfies $\sup_{\varepsilon\in (0,\varepsilon_0)}{\{\|\smash{\widetilde{R}}_\varepsilon^{i}\|_{\infty,U_\varepsilon^{i}}\}}<+\infty$.
        Due to $\mathrm{D}\pi_{d+1}=(0_d^\top,1)\in \mathbb{R}^{1\times (d+1)}$, for a.e.\ ${(\overline{x},t)^\top\in U_\varepsilon^{i}}$, from \eqref{lem:transversal_distance_function.5}, we deduce at
        \begin{align}\label{lem:transversal_distance_function.6}
            \begin{aligned} 
          \mathrm{D}\psi_{\varepsilon}(F_\varepsilon^{i}(\overline{x},t))&=\mathrm{D}\pi_{d+1}\mathrm{D}\Phi_\varepsilon^{-1}(F_\varepsilon^{i}(\overline{x},t))
          \\&=\tfrac{1}{d_i(\overline{x})}(-\nabla \gamma_{i}(\overline{x})^\top, 1) O_i^{\top}+t\,\mathrm{D}\pi_{d+1}\widetilde{R}_\varepsilon^{i}(\overline{x},t)
          \\&= \tfrac{1}{k(s_i(\overline{x}))\cdot n(s_i(\overline{x}))}\tfrac{1}{J_{\gamma_i}(\overline{x})}\big\{O_i(\nabla \gamma_{i}(\overline{x})^\top, -1)^\top\smash{\big\}^\top}+t\,\mathrm{D}\pi_{d+1}\widetilde{R}_\varepsilon^{i}(\overline{x},t)
          \\&=\tfrac{1}{k(s_i(\overline{x}))\cdot n(s_i(\overline{x}))}n(s_i(\overline{x}))^{\top}+t\,\mathrm{D}\pi_{d+1}\widetilde{R}_\varepsilon^{i}(\overline{x},t)\,.
          \end{aligned}
        \end{align}
        Eventually, since $i\in \{1,\ldots,N\}$ was chosen arbitrarily and $\Gamma_I=\smash{\bigcup_{i=1}^N{A_i(\mathrm{graph}(\gamma_i))}}$, from \eqref{lem:transversal_distance_function.6}, we conclude that  the claimed representation \eqref{lem:transversal_distance_function.2}~applies. 
    \end{proof}

    \newpage
    \section{$\Gamma$-convergence result}\label{sec:gamma_convergence}

    \hspace{5mm}Eventually, we have everything at our disposal to establish the main result of the paper,~\textit{i.e.},~the $\Gamma(L^2(\mathbb{R}^d))$-convergence (as $\varepsilon\hspace{-0.1em}\to\hspace{-0.1em} 0^+$) of the family of extended functionals $\smash{\overline{E}}_\varepsilon^\mathtt{d}\colon \hspace{-0.1em}\smash{L^2(\mathbb{R}^d)}\hspace{-0.1em}\to\hspace{-0.1em} \mathbb{R}\cup\{+\infty\}$, $\varepsilon\in (0,\varepsilon_0)$, for every $v_\varepsilon\in \smash{L^2(\mathbb{R}^d)}$ defined by\vspace{-0.5mm}
    \begin{align}
       \smash{\overline{E}}_\varepsilon^{\mathtt{d}}(v_\varepsilon)\coloneqq \begin{cases}
            \smash{E}_\varepsilon^{\mathtt{d}}(v_\varepsilon)&\text{ if }v_\varepsilon\in H^1(\Omega_{I}^{\varepsilon})\,,\\[-0.5mm]
            +\infty & \text{ else}\,,
        \end{cases}
    \end{align}
    to the extended functional 
    $\smash{\overline{E}}^\mathtt{d}\colon \smash{L^2(\mathbb{R}^d)}\to \mathbb{R}\cup\{+\infty\}$,  for every $v\in \smash{L^2(\mathbb{R}^d)}$ defined by\vspace{-0.5mm}
    \begin{align}
        \smash{\overline{E}}^{\mathtt{d}}(v)\coloneqq \begin{cases}
            \smash{E}^{\mathtt{d}}(v)& \text{ if }v\in H^1(\Omega)\,,\\[-0.5mm]
            +\infty& \text{ else}\,.
        \end{cases}
    \end{align}

    \begin{theorem}\label{thm:main}
       \hspace{-0.1mm}Let \hspace{-0.1mm}$\Gamma_I$ \hspace{-0.1mm}be \hspace{-0.1mm}piece-wise \hspace{-0.1mm}flat, \hspace{-0.1mm}\textit{i.e.}, \hspace{-0.1mm}there \hspace{-0.1mm}exist \hspace{-0.1mm}$L\hspace{-0.175em}\in\hspace{-0.175em} \mathbb{N}$ \hspace{-0.1mm}boundary \hspace{-0.1mm}parts \hspace{-0.1mm}$\smash{\Gamma_{I}^{\ell}}\hspace{-0.1em}\subseteq \hspace{-0.175em}\Gamma_I$,~\hspace{-0.1mm}${\ell\hspace{-0.175em}=\hspace{-0.175em}1,\ldots,L}$, with constant outward normal vectors $n_{\ell}\in \mathbb{S}^{d-1}$ such that $\bigcup_{\ell=1}^L{\Gamma_{I}^{\ell}}= \Gamma_I$. Then, if $\mathtt{d}\in C^{0,1}(\Gamma_I)$ with $\mathtt{d}\ge \mathtt{d}_{\textup{min}}$ in $\Gamma_I$, for some $\mathtt{d}_{\textup{min}}>0$, there holds\vspace{-0.5mm}
        \begin{align*}
            \Gamma(L^2(\mathbb{R}^d))\text{-}\lim_{\varepsilon\to 0^+}{\smash{\overline{E}}_\varepsilon^{\mathtt{d}}}= \smash{\overline{E}}^{\mathtt{d}}\,,
        \end{align*}
        \textit{i.e.}, the following two statements apply:
        \begin{itemize}[noitemsep,topsep=2pt,leftmargin=!,labelwidth=\widthof{$\bullet$}] 
        \item[$\bullet$] \textit{$\liminf$-estimate.} For every sequence $\smash{(v_\varepsilon)_{\varepsilon\in (0,\varepsilon_0)}}\subseteq \smash{L^2(\mathbb{R}^d)}$ and $v\in \smash{L^2(\mathbb{R}^d)}$, from $v_\varepsilon\to v$ in $\smash{L^2(\mathbb{R}^d)}$ $(\varepsilon\to 0^{+})$,
        it follows that\vspace{-0.5mm}
        \begin{align*}
            \liminf_{\varepsilon\to 0^+}{\smash{\overline{E}}_\varepsilon^{\mathtt{d}}(v_\varepsilon)}\ge \overline{E}(v)\,;
        \end{align*}

        \item[$\bullet$] \textit{$\limsup$-estimate.} For every $v\in \smash{L^2(\mathbb{R}^d)}$, there exists a sequence $\smash{(v_\varepsilon)_{\varepsilon\in (0,\varepsilon_0)}}\subseteq \smash{L^2(\mathbb{R}^d)}$~such~that  $v_\varepsilon\to v$ in $\smash{L^2(\mathbb{R}^d)}$ $(\varepsilon\to 0^{+})$ and\vspace{-0.5mm}
        \begin{align*}
         \limsup_{\smash{\varepsilon\to 0^+}}{\smash{\overline{E}}_\varepsilon^{\mathtt{d}}(v_\varepsilon)}\leq \overline{E}(v)\,.
        \end{align*}
    \end{itemize}
    \end{theorem}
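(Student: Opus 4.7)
The strategy is to treat the two $\Gamma$-convergence estimates separately, making systematic use of the auxiliary machinery of Section \ref{sec:tools} together with a straightening argument that exploits the piece-wise flatness of $\Gamma_I$ in the $\liminf$-part.

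\emph{$\liminf$-estimate.} Without loss of generality, I assume $\liminf_{\varepsilon\to 0^+}\smash{\overline{E}}_\varepsilon^h(v_\varepsilon)<+\infty$ and pass to a subsequence realizing the $\liminf$. The equi-coercivity from Lemma \ref{lem:equicoercive} combined with the trace theorem then yields, up to a further subsequence, a limit $v\in H^1(\Omega)$ satisfying $v=u_D$ a.e.\ on $\Gamma_D$, with $v_\varepsilon\rightharpoonup v$ in $H^1(\Omega)$ and $v_\varepsilon\to v$ strongly in $L^2(\Gamma_I)$. The bulk Dirichlet part passes to the limit by weak lower semi-continuity, and the linear terms by strong $L^2$-convergence together with trace continuity on $\Gamma_N$, so the entire difficulty is concentrated in the interface contribution. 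On each flat piece $\Gamma_I^\ell$, $\ell=1,\ldots,L$, I would perform an orthogonal change of variables aligning the constant normal $n_\ell$ with $e_d$ and reparametrize $\Sigma_I^{\varepsilon,\ell}$ as a tube over $\Gamma_I^\ell$ in the direction of $n_\ell$, which up to a bi-Lipschitz correction of order $\varepsilon$ (controlled via Lemma \ref{lem:approx_trans_formula}) has effective height $\varepsilon(k\cdot n)h$. Since $v_\varepsilon$ vanishes on $\Gamma_I^\varepsilon$, writing $v_\varepsilon(s)=-\int_0^{\varepsilon(k\cdot n)h(s)}\partial_{n_\ell}v_\varepsilon(s+\tau n_\ell)\,\mathrm{d}\tau$ and applying a one-dimensional Cauchy--Schwarz inequality in $\tau$ yields the key estimate
\begin{align*}
\int_{\Gamma_I^\ell}\tfrac{|v_\varepsilon|^2}{(k\cdot n)h}\,\mathrm{d}s\le (1+o(1))\,\varepsilon\|\nabla v_\varepsilon\|_{\Sigma_I^{\varepsilon,\ell}}^2\quad(\varepsilon\to 0^+)\,.
\end{align*}
Summing over $\ell=1,\ldots,L$, passing to the $\liminf$, and using the strong trace convergence to replace $v_\varepsilon|_{\Gamma_I}$ by $v$ on the left-hand side finishes the estimate.

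\emph{$\limsup$-estimate.} A standard diagonalization reduces the task to constructing a recovery sequence for a smooth $v$ admissible for $\smash{E}^h$. I would set $v_\varepsilon\coloneqq v$ on $\Omega$ and, on $\Sigma_I^\varepsilon$, use the affine profile
\begin{align*}
v_\varepsilon(s+tk(s))\coloneqq v(s)\Big(1-\tfrac{t}{\varepsilon h(s)}\Big)\,,\quad s\in \Gamma_I\,,\;t\in[0,\varepsilon h(s))\,,
\end{align*}
trivially extended to $\mathbb{R}^d$. Denoting by $\pi\colon \Sigma_I^\varepsilon\to \Gamma_I$ the base-point projection, I rewrite $v_\varepsilon=(v\circ\pi)(1-\psi_\varepsilon/(\varepsilon\,h\circ\pi))$ on $\Sigma_I^\varepsilon$, whence Lemma \ref{lem:transversal_distance_function} yields $\nabla v_\varepsilon=-\tfrac{v\circ\pi}{\varepsilon\,h\circ\pi}\tfrac{n}{k\cdot n}+O(1)$ pointwise, with uniformly bounded error. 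The convergence $v_\varepsilon\to v$ in $L^2(\mathbb{R}^d)$ is immediate from $|\Sigma_I^\varepsilon|=O(\varepsilon)$ and $v\in L^\infty$, and the bulk and linear parts pass to the limit by construction. Squaring the dominant gradient contribution and invoking the Lebesgue differentiation theorem for vanishing boundary layers (Lemma \ref{eq:Lebesgue_boundary_limit}) with $a\coloneqq((k\cdot n)h)^{-2}$ and $p=2$ produces
\begin{align*}
\tfrac{\varepsilon}{2}\|\nabla v_\varepsilon\|_{\Sigma_I^\varepsilon}^2\to \tfrac{1}{2}\|((k\cdot n)h)^{-1/2}v\|_{\Gamma_I}^2\quad(\varepsilon\to 0^+)\,,
\end{align*}
while the cross-terms contribute only $O(\varepsilon)$ since they integrate bounded quantities against the shrinking measure $|\Sigma_I^\varepsilon|$.

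\emph{Main obstacle.} The delicate point is that $k\neq n$ in general, so a naive one-dimensional slicing of $\Sigma_I^\varepsilon$ along $k$ yields the wrong weight $(k\cdot n)/h$ instead of the correct $1/((k\cdot n)h)$. Piece-wise flatness is indispensable in the $\liminf$-step because it furnishes a \emph{constant} outward normal $n_\ell$ on each piece, so that slicing along $n_\ell$ within the straightened chart produces the effective height $(k\cdot n)h$ of the layer and, upon a sharp Cauchy--Schwarz, exactly the weight matching the limit functional. In contrast, the $\limsup$-step depends only on the Lipschitz structure through $\psi_\varepsilon$ and therefore goes through on any bounded Lipschitz domain, consistent with the announcement in the abstract.
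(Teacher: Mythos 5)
Your plan is substantially the same as the paper's and the overall strategy is sound; both estimates run on the same engines (equi-coercivity, the pointwise Poincar\'e inequality, the transversal distance function, and the Lebesgue differentiation lemma). Two points, however, deserve explicit attention, because as written your sketch glosses over the exact mechanisms the paper uses to close them.

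In the $\liminf$-estimate your Newton--Leibniz identity is written with upper limit $\varepsilon (k\cdot n)(s)h(s)$, but the point $s+\varepsilon\widetilde{h}(s)n_\ell$ is \emph{not} generally on $\Gamma_I^\varepsilon$. The actual height at which the slanted outer boundary $\Gamma_I^\varepsilon=\{s'+\varepsilon h(s')k(s')\}$ is reached above a base point $s\in\Gamma_I^\ell$ in direction $n_\ell$ is $\varepsilon\widetilde{h}\bigl((\phi_\varepsilon^\ell)^{-1}(s)\bigr)$, where $\phi_\varepsilon^\ell(s')=s'+\varepsilon h(s')\{k(s')-(k(s')\cdot n_\ell)n_\ell\}$ is the in-plane shift induced by the tangential component of $k$. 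The paper introduces precisely this shift map, restricts to the (a.e.\ full) subset $\Gamma_I^{\varepsilon,\ell}=\Gamma_I^\ell\cap\phi_\varepsilon^\ell(\Gamma_I^\ell)$ over which the straight tube $\widetilde{\Sigma}_I^{\varepsilon,\ell}$ is well defined, and then extends $v_\varepsilon$ by zero outside $\Sigma_I^\varepsilon$ so that $\sum_\ell\|\nabla v_\varepsilon\|_{\widetilde{\Sigma}_I^{\varepsilon,\ell}}^2\le\|\nabla v_\varepsilon\|_{\Sigma_I^\varepsilon}^2$. Your ``bi-Lipschitz correction of order $\varepsilon$'' gestures at this, but the correction is not a perturbation of the integrand — it shifts the integration \emph{endpoint} and the base point of the weight $\widetilde{h}$, which is why the paper needs the uniform bound $\|\widetilde{h}\circ(\phi_\varepsilon^\ell)^{-1}-(k\cdot n)h\|_\infty=O(\varepsilon)$ and the a.e.\ convergence $\chi_{\Gamma_I^{\varepsilon,\ell}}\to 1$ before passing to the $\liminf$.

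In the $\limsup$-estimate you take the recovery profile $v_\varepsilon=(v\circ\pi)\varphi_\varepsilon$ (constant along fibers) and reduce to smooth $v$ by a diagonalization. The paper instead applies the Sobolev extension theorem to put $v\in H^1(\mathbb{R}^d)$ and sets $v_\varepsilon=v\varphi_\varepsilon$ directly; then $\nabla v_\varepsilon=\varphi_\varepsilon\nabla v+v\nabla\varphi_\varepsilon$ makes sense for every $v\in H^1(\Omega)$ and the convexity ($\lambda$-)split handles the cross-term, with no density argument required. Your route is valid but needs the extra diagonalization: $v\circ\pi$ is not in $H^1(\Sigma_I^{\varepsilon_0})$ for a general $H^1$-trace, so the Lebesgue differentiation lemma (which is stated for $v\in H^{1,p}(\Sigma_I^{\varepsilon_0})$) does not directly apply to it without the smoothness reduction, and one must also verify that $\smash{\overline{E}}^h$ is approximable along the chosen smooth sequence. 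This is a genuine, if minor, added step that the paper's Sobolev-extension construction avoids; conversely your profile is arguably more transparent geometrically. Either way the limiting computation with $a=((k\cdot n)h)^{-2}$, $p=2$ gives the correct interface energy, and the observation that the $\limsup$-estimate does not use piece-wise flatness is exactly the paper's point as well.
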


    \begin{remark}
        {We emphasize that the assumption $\mathtt{d}\ge \mathtt{d}_{\textup{min}}$ in $\Gamma_I$, for some $\mathtt{d}_{\textup{min}}>0$, in some cases, when the total amount of insulating material $m>0$ is small, may not be satisfied~(\textit{cf}.~\cite{Buttazzo1988c}).}
    \end{remark}
    
    Let us start by proving the $\liminf$-estimate.\vspace{-0.5mm}\enlargethispage{5.5mm}

    \begin{lemma}[$\liminf$-estimate]\label{lem:liminf}
        Let $\Gamma_I$ be piece-wise flat.
        Then, if $\mathtt{d}\in C^{0,1}(\Gamma_I)$ with $\mathtt{d}\ge \mathtt{d}_{\textup{min}}$ in $\Gamma_I$, for some $\mathtt{d}_{\textup{min}}>0$, for every sequence $\smash{(v_\varepsilon)_{\varepsilon\in (0,\varepsilon_0)}}\subseteq L^2(\mathbb{R}^d)$ and $v\in L^2(\mathbb{R}^d)$, from $v_\varepsilon\to v$ in $L^2(\mathbb{R}^d)$ $(\varepsilon\to 0)$, 
        it follows that
        \begin{align*}
            \liminf_{\varepsilon\to 0^+}{\smash{\overline{E}}_\varepsilon^{\mathtt{d}}(v_\varepsilon)}\ge \overline{E}(v)\,.
        \end{align*}
    \end{lemma}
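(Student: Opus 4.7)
The plan is to follow the standard strategy for $\liminf$-estimates in $\Gamma$-convergence: extract a convergent subsequence via equi-coercivity, pass to the limit in the bulk and linear terms by weak $H^1(\Omega)$-convergence (and strong trace convergence in $L^2(\partial\Omega)$), and carefully handle the interface term. Without loss of generality, assume $\liminf_{\varepsilon\to 0^+}\overline{E}_\varepsilon^h(v_\varepsilon)<+\infty$ and extract a non-relabeled subsequence realizing this $\liminf$. Then $v_\varepsilon\in H^1(\Omega_I^\varepsilon)$, with $v_\varepsilon=u_D$ a.e.\ on $\Gamma_D$ and $v_\varepsilon=0$ a.e.\ on $\Gamma_I^\varepsilon$, and Lemma~\ref{lem:equicoercive} yields that $v_\varepsilon|_\Omega$ is uniformly bounded in $H^1(\Omega)$. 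A further subsequence therefore converges weakly in $H^1(\Omega)$ and strongly in $L^2(\partial\Omega)$ (for the trace) to $v|_\Omega$; the Dirichlet indicator, the linear terms in $f$ and $g$, and the weak lower semicontinuity of $\tfrac12\|\nabla(\cdot)\|_\Omega^2$ then handle every contribution except the one from the insulating layer.

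The nontrivial step is to establish
\begin{equation*}
   \liminf_{\varepsilon\to 0^+}\tfrac{\varepsilon}{2}\|\nabla v_\varepsilon\|_{\Sigma_I^\varepsilon}^2 \ge \tfrac{1}{2}\int_{\Gamma_I}\frac{v^2}{(k\cdot n)\,h}\,\mathrm{d}s\,.
\end{equation*}
A naive application of the point-wise Poincar\'e inequality along the $k$-direction (Lemma~\ref{lem:poincare}) combined with the approximative transformation formula (Lemma~\ref{lem:approx_trans_formula}) only produces the weaker bound with factor $(k\cdot n)/h$ in place of $1/((k\cdot n)h)$: the length of a $k$-ray across the layer is $\varepsilon h$, whereas the true Euclidean thickness in the normal direction is $\varepsilon\widetilde{h}=\varepsilon(k\cdot n)h$, and it is the latter that governs the sharp one-dimensional Poincar\'e constant. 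To recover the correct factor one must apply the one-dimensional Poincar\'e inequality along the \emph{true normal} direction, and this is precisely where the piece-wise flatness assumption enters: on each flat component $\Gamma_I^\ell$ the outward unit normal $n_\ell$ is constant, so ``vertical'' rays in direction $n_\ell$ are globally well-defined.

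The geometric core of the argument is to associate, for $\varepsilon$ sufficiently small, to each flat piece $\Gamma_I^\ell$ a ``vertical column''
\begin{equation*}
    V_\ell^\varepsilon\coloneqq \{s+\tau\,n_\ell\mid s\in\Gamma_I^\ell,\,\tau\in[0,\tau_\ell^\ast(s)]\}\subseteq \Sigma_I^\varepsilon
\end{equation*}
of height $\tau_\ell^\ast(s)=\varepsilon\widetilde{h}(s)+O(\varepsilon^2)$, with the $V_\ell^\varepsilon$ pairwise essentially disjoint. Both the containment $V_\ell^\varepsilon\subseteq\Sigma_I^\varepsilon$ and the asymptotic control on $\tau_\ell^\ast$ follow from the implicit function theorem applied to $\Phi_\varepsilon^{-1}$, exploiting the bi-Lipschitz property of $\Phi_\varepsilon$ established in \cite{HMT07} and the quantitative transversality $k\cdot n\ge\kappa$. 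On each $V_\ell^\varepsilon$ the one-variable function $\tau\mapsto v_\varepsilon(s+\tau\,n_\ell)$ attains the trace value $v_\varepsilon(s)$ at $\tau=0$ and vanishes at $\tau=\tau_\ell^\ast(s)$, so the one-dimensional Poincar\'e inequality combined with the trivial bound $|\nabla v_\varepsilon|^2\ge|\nabla v_\varepsilon\cdot n_\ell|^2$ gives, after integration over $s\in\Gamma_I^\ell$,
\begin{equation*}
    \int_{\Gamma_I^\ell}\frac{v_\varepsilon(s)^2}{\widetilde{h}(s)+O(\varepsilon)}\,\mathrm{d}s\le \varepsilon\int_{V_\ell^\varepsilon}|\nabla v_\varepsilon|^2\,\mathrm{d}x\,.
\end{equation*}
Passing to the $\liminf$ via Fatou's lemma, using the strong trace convergence $v_\varepsilon|_\Omega\to v$ in $L^2(\Gamma_I)$ to absorb the $O(\varepsilon)$ correction uniformly, and finally summing over $\ell=1,\ldots,L$ by disjointness of the $V_\ell^\varepsilon$ and superadditivity of $\liminf$ completes the estimate.

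The main obstacle is the clean verification of the column construction: one must show that the $V_\ell^\varepsilon$ remain essentially disjoint and jointly contained in $\Sigma_I^\varepsilon$ even in a neighborhood of the edges where adjacent flat pieces meet, and that the deviation $\tau_\ell^\ast(s)-\varepsilon\widetilde{h}(s)$ is uniformly $O(\varepsilon^2)$ in $s\in\Gamma_I^\ell$. Both points depend critically on the piece-wise flatness of $\Gamma_I$ together with the Lipschitz regularity of $k$ and $h$ — which is precisely why this hypothesis is imposed for the $\liminf$-estimate, while (as the authors emphasize) the $\limsup$-estimate will only require Lipschitz regularity of $\partial\Omega$.
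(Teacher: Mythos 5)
Your strategy is essentially the same as the paper's: extract a subsequence by equi-coercivity, pass to the limit in the bulk and linear terms by weak lower semicontinuity and strong trace convergence, and obtain the sharp factor $1/((k\cdot n)h)$ by applying the one-dimensional Poincar\'e inequality along the constant normal $n_\ell$ of each flat piece rather than along $k$ — your diagnosis that the naive $k$-direction Poincar\'e only yields the weaker factor $(k\cdot n)/h$ is exactly right, and this is precisely why piece-wise flatness is invoked. The difference is in how the normal columns are realized. You keep the base points at $s\in\Gamma_I^\ell$ and define an implicit height $\tau_\ell^\ast(s)$ by solving for the first exit of the vertical ray through $\Gamma_I^\varepsilon$, invoking the implicit function theorem applied to $\Phi_\varepsilon^{-1}$ to get $\tau_\ell^\ast(s)=\varepsilon\widetilde h(s)+O(\varepsilon^2)$. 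The paper instead slides the base points by the explicit in-plane affine map $\phi_{\varepsilon'}^\ell(s)=s+\varepsilon' h(s)\{k(s)-(k(s)\cdot n_\ell)n_\ell\}$, which stays in the flat piece; the column of \emph{exact} height $\varepsilon'\widetilde h(s)$ based at $\phi_{\varepsilon'}^\ell(s)$ then terminates identically at $s+\varepsilon' h(s)k(s)\in\Gamma_{I}^{\varepsilon'}$, so no implicit equation and no $O(\varepsilon^2)$ bookkeeping is needed, and the shrunken index set $\Gamma_I^{\varepsilon',\ell}\coloneqq\Gamma_I^\ell\cap\phi_{\varepsilon'}^\ell(\Gamma_I^\ell)$ automatically handles the edge region and yields essential disjointness of the columns. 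Your invocation of the implicit function theorem is a bit fragile in this regularity class: $\Phi_\varepsilon(s,t)=s+tk(s)$ is only bi-Lipschitz (not $C^1$) for $k\in(C^{0,1})^d$ and $h\in C^{0,1}$, so a classical IFT does not directly apply and you would need a Lipschitz IFT / fixed-point argument to define $\tau_\ell^\ast$ and control the $O(\varepsilon^2)$ remainder uniformly — the paper's purely algebraic sliding map sidesteps this entirely. You do correctly flag the edge-overlap/containment issue as the main remaining obstacle; the explicit $\phi_{\varepsilon'}^\ell$-construction is also the cleaner fix for that, since $|\Gamma_I^\ell\setminus\Gamma_I^{\varepsilon',\ell}|\to 0$ is immediate from $\|\mathrm{id}-\phi_{\varepsilon'}^\ell\|_\infty\le 2\|h\|_\infty\varepsilon'$, after which strong $L^2(\Gamma_I)$-trace convergence and $\chi_{\Gamma_I^{\varepsilon',\ell}}\to 1$ a.e.\ give the limit, as in your final passage.
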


    \begin{proof}
         Let $\smash{(v_\varepsilon)_{\varepsilon\in (0,\varepsilon_0)}}\subseteq L^2(\mathbb{R}^d)$ be a sequence such that $v_\varepsilon\to v$ in $L^2(\mathbb{R}^d)$ $(\varepsilon\to 0^{+})$.
        Then, without loss of generality, we may assume that $\smash{\liminf_{\varepsilon\to 0^+}{\smash{\overline{E}}_\varepsilon^{\mathtt{d}}(v_\varepsilon)}<+\infty}$. Otherwise,~we~trivially have that $\liminf_{\varepsilon\to 0^+}{\smash{\overline{E}}_\varepsilon^{\mathtt{d}}(v_\varepsilon)}\ge \overline{E}(v)$.
        Hence, there exists a subsequence {$(v_{\varepsilon'})_{\varepsilon'\in (0,\varepsilon_0)}\subseteq L^2(\mathbb{R}^d)$ with $v_{\varepsilon'}\in H^1(\Omega_{\varepsilon'})$},
        $v_{\varepsilon'}=0$ a.e.\ on $\smash{\Gamma_I^{\varepsilon'}}$, and  $v_{\varepsilon'}=u_D$ a.e.\ on $\Gamma_D$ for all $\varepsilon'\in (0,\varepsilon_0)$ such that 
        \begin{align}\label{lem:liminf.1}
            E_{\varepsilon'}^{\mathtt{d}}(v_{\varepsilon'})\to \liminf_{\varepsilon\to 0^+}{\smash{\overline{E}}_\varepsilon^{\mathtt{d}}(v_\varepsilon)}\quad (\varepsilon'\to 0^{+})\,.
        \end{align}
        From \hspace{-0.1mm}\eqref{lem:liminf.1}, \hspace{-0.1mm}by \hspace{-0.1mm}the \hspace{-0.1mm}equi-\hspace{-0.1mm}coercivity \hspace{-0.1mm}of \hspace{-0.1mm}$\smash{\overline{E}}_\varepsilon^\mathtt{d}\colon \hspace{-0.175em}L^2(\mathbb{R}^d)\hspace{-0.175em}\to\hspace{-0.175em} \mathbb{R}\cup\{+\infty\}$, \hspace{-0.1mm}$\varepsilon\hspace{-0.175em}\in\hspace{-0.175em} (0,\varepsilon_0)$, (\textit{cf}.\ \hspace{-0.1mm}Lemma~\hspace{-0.1mm}\ref{lem:equicoercive}),~\hspace{-0.1mm}we~\hspace{-0.1mm}\mbox{obtain}\vspace{-0.5mm}
        \begin{align*}
            \sup_{\varepsilon\in (0,\varepsilon_0)}\smash{\big\{\|v_{\varepsilon'}\|_{\Omega}^2+\|\nabla v_{\varepsilon'}\|_{\Omega}^2+\tfrac{1}{\varepsilon'}\|v_{\varepsilon'}\|_{{\Sigma_{I}^{\smash{\varepsilon'}}}}^2+\varepsilon'\|\nabla v_{\varepsilon'}\|_{{\Sigma_{I}^{\smash{\varepsilon'}}}}^2\big\}}<+\infty\,,\\[-6mm]
        \end{align*}
        which, using the weak continuity of the trace operator from $\smash{H^1(\Omega)}$ to $\smash{H^{\smash{\frac{1}{2}}}(\partial\Omega)}$ (\textit{cf}.~\cite[Thm.~II.4.3]{Galdi}) and the compact embedding $H^{\smash{\frac{1}{2}}}(\partial\Omega)\hookrightarrow L^2(\partial\Omega)$, implies that $v\in H^1(\Omega)$ and  
        \begin{alignat}{3}\label{lem:liminf.2}
          v_{\varepsilon'}&\rightharpoonup  v&&\quad \text{ in }H^1(\Omega)&&\quad(\varepsilon'\to0^{+})\,,\\[-0.25mm]\label{lem:liminf.2.1}v_{\varepsilon'}&\rightharpoonup  v&&\quad \text{ in }H^{\smash{\frac{1}{2}}}(\partial\Omega)&&\quad(\varepsilon'\to0^{+})\,,\\[-0.25mm]\label{lem:liminf.3}
            v_{\varepsilon'}&\to v&&\quad \text{ in }L^2(\partial\Omega)&&\quad(\varepsilon'\to0^{+})\,.
        \end{alignat}\newpage
        \noindent In particular, we have that $v=u_D$ a.e.\ on $\Gamma_D$. From \eqref{lem:liminf.2} and \eqref{lem:liminf.2.1}, in turn, we obtain 
        \begin{align}\label{lem:liminf.4}
            \smash{\liminf_{\varepsilon'\to 0^+}{\big\{\tfrac{1}{2}\|\nabla v_{\varepsilon'}\|_{\Omega}^2-(f,v_{\varepsilon'})_{\Omega}-\langle g,v_{\varepsilon'}\rangle_{\smash{H^{\smash{\frac{1}{2}}}(\Gamma_N)}}\big\}}}\ge \tfrac{1}{2}\|\nabla v\|_{\Omega}^2-(f,v)_{\Omega}-\langle g,v\rangle_{\smash{H^{\smash{\frac{1}{2}}}(\Gamma_N)}}\,.
        \end{align}
        Since $\Gamma_I$ is piece-wise flat, there exists flat boundary parts $\Gamma_{I}^{\ell}\subseteq \Gamma_I$, $\ell=1,\ldots,L$, with constant outward unit normal vectors $n_{\ell}\in \mathbb{S}^{d-1}$ such that $\bigcup_{\ell=1}^L{\Gamma_{I}^{\ell}}= \Gamma_I$.
        Then, for every $\ell=1,\ldots,L$, we introduce the transformation $\phi_{\varepsilon'}^{\ell}\colon \Gamma_{I}^{\ell}\to \mathbb{R}^d$ (\textit{cf}.\ Figure \ref{fig:construction}), for every $s\in \Gamma_{I}^{\ell}$ defined by\enlargethispage{10mm}
        \begin{align}\label{lem:liminf.5}
        \begin{aligned} 
            \phi_{\varepsilon'}^{\ell}(s)&\coloneqq s+\varepsilon' \mathtt{d}(s)k(s)-\varepsilon'  \widetilde{\mathtt{d}}(s)n_{\ell}
            \\&\,= s+\varepsilon' \mathtt{d}(s)\smash{\big\{k(s)-(k(s)\cdot n_{\ell})n_{\ell}\big\}} \,,
            \end{aligned}
        \end{align}
        which, assuming that $\varepsilon'\in (0,\varepsilon_0)$ is sufficiently small,  is bi-Lipschitz continuous.~Moreover,~in~\eqref{lem:liminf.5}, the distribution~function~$\widetilde{\mathtt{d}}\colon \Gamma_I\to [0,+\infty)$ (in direction of~$n$) 
        is defined by \eqref{eq:relation_h_h_tilde}.
        By definition \eqref{lem:liminf.5}, for every $\varepsilon'\in (0,\varepsilon_0)$ and $\ell=1,\ldots,L$, we have that
        \begin{align}\label{lem:liminf.7} \|\mathrm{id}_{\mathbb{R}^d}-\phi_{\varepsilon'}^{\ell}\|_{\infty,\Gamma_{I}^{\ell}}\leq 2\|\mathtt{d}\|_{\infty,\Gamma_{I}^{\ell}}\,\varepsilon'\,,
        \end{align}
        which implies that for the sets $\Gamma_{I}^{\varepsilon',\ell}\hspace{-0.1em}\coloneqq \hspace{-0.1em}\Gamma_{I}^{\ell}\cap \phi_{\varepsilon'}^{\ell}(\Gamma_{I}^{\ell})$, $\ell\hspace{-0.1em}=\hspace{-0.1em}1,\ldots,L$,~for~every~${\ell\hspace{-0.1em}=\hspace{-0.1em}1,\ldots,L}$,~it~holds~that 
        \begin{align*}
            \smash{\vert \Gamma_{I}^{\ell}\setminus \Gamma_{I}^{\varepsilon',\ell}\vert
            \to 0 \quad (\varepsilon'\to 0)\,,}
        \end{align*} 
        and, thus, for a not relabelled subsequence 
        \begin{align}\label{lem:liminf.8}
            \smash{\chi_{\smash{\Gamma_{I}^{\varepsilon',\ell}}}\to  1\quad\text{ a.e.\ in }\Gamma_{I}^{\ell}\quad (\varepsilon'\to 0^{+})\,.}
        \end{align} 
        From \eqref{lem:liminf.7}, in turn, for every $\varepsilon'\in (0,\varepsilon_0)$ and $\ell=1,\ldots,L$, we infer that
        \begin{align*}
            \|\mathrm{id}_{\mathbb{R}^d}-(\phi_{\varepsilon'}^{\ell})^{-1}\|_{\infty,\phi_{\varepsilon'}^{\ell}(\Gamma_{I}^{\ell})}=\|\phi_{\varepsilon'}^{\ell}-\mathrm{id}_{\mathbb{R}^d}\|_{\infty,\Gamma_{I}^{\ell}}\leq 2\|\mathtt{d}\|_{\infty,\Gamma_{I}^{\ell}}\varepsilon'\,,
        \end{align*}
        which, on the basis of $\widetilde{\mathtt{d}}\in H^{1,\infty}(\Gamma_I^{\ell})$ (because $\mathtt{d}\in H^{1,\infty}(\Gamma_I^{\ell})$, $k\in (H^{1,\infty}(\Gamma_I^{\ell}))^d$, and $n=n_{\ell}$~in~$\Gamma_I^{\ell}$) for all $\ell=1,\ldots,L$ and the representation \eqref{eq:relation_h_h_tilde}, implies that
        \begin{align}\label{lem:liminf.9}
            \|\widetilde{\mathtt{d}}\circ (\phi_{\varepsilon'}^{\ell})^{-1}- (k\cdot n)\mathtt{d}\|_{\infty,\phi_{\varepsilon'}^{\ell}(\Gamma_{I}^{\ell})}\leq 2\|\nabla\widetilde{\mathtt{d}}\|_{\infty,\Gamma_I^{\ell}}\|\mathtt{d}\|_{\infty,\Gamma_{I}^{\ell}}\varepsilon'\,.
        \end{align}
        Next, we define the \textit{thin {insulating}  layer in  direction of $n$} (\textit{cf}.\ Figure \ref{fig:construction})\vspace{-1.5mm}
        \begin{align}\label{def:sigma_tilde}
        \begin{aligned}
                \widetilde{\Sigma}_{I}^{\varepsilon'}&\coloneqq \bigcup_{i=1}^L{\widetilde{\Sigma}_{I}^{\varepsilon',\ell}}\,,\text{ where}\\
                \widetilde{\Sigma}_{I}^{\varepsilon',\ell}&\coloneqq \big\{ \widetilde{s}+t n_{\ell}\mid \widetilde{s}\in \Gamma_{I}^{\varepsilon',\ell}\,,\;t\in [0,\varepsilon'\widetilde{\mathtt{d}}((\phi_{\varepsilon'}^{\ell})^{-1}(\widetilde{s})))\big\}\quad \text{ for all }\ell=1,\ldots,L\,.
            \end{aligned}
        \end{align}  
        If we define the \textit{interacting insulation boundary parts in direction of $n$} (\textit{cf}.\ Figure \ref{fig:construction})
        \begin{align}\label{def:gamma_tilde}
            \widetilde{\Gamma}_{I}^{\varepsilon',\ell}&\coloneqq \big\{ \widetilde{s}+\varepsilon'\widetilde{\mathtt{d}}((\phi_{\varepsilon'}^{\ell})^{-1}(\widetilde{s}))\,n_{\ell}\mid \widetilde{s}\in \Gamma_{I}^{\varepsilon',\ell}\big\}\quad\text{ for all }\ell=1,\ldots,L\,,
        \end{align}
        then, for every $\ell=1,\ldots,L$, by definition \eqref{lem:liminf.5} (\textit{cf}.\ Figure \ref{fig:construction}), we have that
        \begin{align}\label{lem:liminf.10}
            \smash{\widetilde{\Gamma}_{I}^{\varepsilon',\ell}\subseteq \Gamma_I^{\varepsilon'}\,.}
        \end{align}
        Exploiting that $v_{\varepsilon'}= 0$ a.e.\ on $\Gamma_{I}^{\varepsilon'}$ and that $\Sigma_{I}^{\varepsilon_0}\subseteq \mathbb{R}^d\setminus \Omega$, we can
        extend $v_{\varepsilon'}\in H^1(\Omega_{I}^{\varepsilon'})$ via
        \begin{align*}
            \smash{v_{\varepsilon'}\coloneqq 0\quad \text{ a.e.\ in }\Sigma_{I}^{\varepsilon_0}\setminus \Sigma_{I}^{\varepsilon'}\,,}
        \end{align*}
        to $v_{\varepsilon'}\hspace{-0.1em}\in\hspace{-0.1em} H^1(\Omega_{I}^{\varepsilon_0})$, so that for $\widetilde{\varepsilon}_0\hspace{-0.1em}\in\hspace{-0.1em} (0,\varepsilon_0)$ such that $\widetilde{\Sigma}_{I}^{\varepsilon'}\hspace{-0.1em}\subseteq \hspace{-0.1em}\Sigma_{I}^{\varepsilon_0}$ for all $\varepsilon'\hspace{-0.1em}\in \hspace{-0.1em}(0,\widetilde{\varepsilon}_0)$,~for~every~$\varepsilon'\hspace{-0.1em}\in\hspace{-0.1em} (0,\widetilde{\varepsilon}_0)$, we have that
        \begin{align}\label{lem:liminf.11}
            \smash{\nabla v_{\varepsilon'}=0\quad \textrm{ a.e.\ in }\widetilde{\Sigma}_{I}^{\varepsilon'}\setminus\Sigma_{I}^{\varepsilon'}\,.}
        \end{align}
        Resorting to the point-wise Poincar\'e inequality (\textit{cf}.\ Lemma \ref{lem:poincare} with $\Sigma_{I}^{\varepsilon}\coloneqq \smash{\widetilde{\Sigma}_{I}^{\varepsilon',\ell}}$, \textit{i.e.},
        $\Gamma_I=\smash{\Gamma_{I}^{\varepsilon',\ell}}$, $\Gamma_I^{\varepsilon'}=\widetilde{\Gamma}_{I}^{\varepsilon',\ell}$,  $k=n_{\ell}$, $\mathtt{d}=\widetilde{\mathtt{d}}\circ \phi_{\varepsilon}^{\ell}$, and $\varepsilon=\varepsilon'$), for every $\widetilde{s}\in \widetilde{\Gamma}_{I}^{\varepsilon',\ell}$, $\ell=1,\ldots,L$, $\varepsilon'\in  (0,\widetilde{\varepsilon}_0)$, due to $v_{\varepsilon'}(\widetilde{s}+\varepsilon'\widetilde{\mathtt{d}}((\phi_{\varepsilon'}^{\ell})^{-1}(\widetilde{s}))\,n_{\ell})=0$ (as \eqref{lem:liminf.10}), we find that\enlargethispage{2mm}
        \begin{align}
            \vert v_{\varepsilon'}(\widetilde{s})\vert^2\leq \varepsilon'\widetilde{\mathtt{d}}((\phi_{\varepsilon'}^{\ell})^{-1}(\widetilde{s}))\int_0^{\varepsilon'\widetilde{\mathtt{d}}((\phi_{\varepsilon'}^{\ell})^{-1}(\widetilde{s}))}{\vert \nabla v_{\varepsilon'}(\widetilde{s}+t n_\ell)\vert^2\,\mathrm{d}t}\,.
        \end{align}
        Then, by the approximative transformation formula 
        (\textit{cf}.\ Lemma \ref{lem:approx_trans_formula}  with $\Sigma_{I}^{\varepsilon}\hspace{-0.1em}\coloneqq \hspace{-0.1em}\widetilde{\Sigma}_{I}^{\varepsilon',\ell}$,~\textit{i.e.},~${\Gamma_I\hspace{-0.1em}=\hspace{-0.1em}\Gamma_{I}^{\varepsilon',\ell}}$, $\Gamma_I^{\varepsilon'}=\widetilde{\Gamma}_{I}^{\varepsilon',\ell}$,  $k=n_{\ell}$, $\mathtt{d}=\widetilde{\mathtt{d}}\circ {(\phi_{\varepsilon}^{\ell})^{-1}}$, and $\varepsilon=\varepsilon'$), for every $\ell=1,\ldots ,L$, we have that
        \begin{align}\label{lem:liminf.12}
            \begin{aligned} 
           \|\nabla v_{\varepsilon'}\|_{\widetilde{\Sigma}_{I}^{\varepsilon',\ell}}^2&=\int_{\Gamma_{I}^{\varepsilon',\ell}}{\int_0^{\varepsilon' \widetilde{\mathtt{d}}((\phi_{\varepsilon'}^{\ell})^{-1}(\widetilde{s}))}{ \vert \nabla v_{\varepsilon'}(\widetilde{s}+t n_{\ell})\vert^2\big\{1+t\widetilde{R}_{\varepsilon'}^{\ell}(\widetilde{s},t) \big\}\,\mathrm{d}t}\,\mathrm{d}\widetilde{s}}
           \\&\ge \tfrac{1}{\varepsilon'}\|(\widetilde{\mathtt{d}}\circ(\phi_{\varepsilon'}^{\ell})^{-1})^{-\smash{\frac{1}{2}}}v_{\varepsilon'}\|_{\Gamma_{I}^{\varepsilon',\ell}}^2\big\{1-\varepsilon'\|\widetilde{\mathtt{d}}\|_{\infty,\Gamma_I}\|\widetilde{R}_{\varepsilon'}^{\ell}\|_{\infty,{\widetilde{D}_{I}^{\varepsilon',\ell}}}\big\}\,,
            \end{aligned}
        \end{align}
        where $\widetilde{R}_{\varepsilon'}^{\ell}\in L^\infty(\widetilde{D}_{I}^{\varepsilon',\ell})$, $\widetilde{D}_{I}^{\varepsilon',\ell}\coloneqq \bigcup_{\widetilde{s}\in \smash{\Gamma_{I}^{\varepsilon',\ell}}}{\{\widetilde{s}\}\times [0,\varepsilon' \widetilde{\mathtt{d}}((\phi_{\varepsilon'}^{\ell})^{-1}(\widetilde{s})))}$, $\varepsilon'\in (0,\widetilde{\varepsilon}_0)$,  are~as~in~Lemma~\ref{lem:approx_trans_formula} with $\sup_{\varepsilon'\in (0,\widetilde{\varepsilon}_0)}{\{\|\widetilde{R}_{\varepsilon'}^{\ell}\|_{\infty,\widetilde{D}_{I}^{\varepsilon',\ell}}\}}<+\infty$. Then, using
        \eqref{lem:liminf.3}, \eqref{lem:liminf.8}, and \eqref{lem:liminf.9}, from \eqref{lem:liminf.12},  for every $\ell=1,\ldots ,L$, we deduce that
        \begin{align}\label{lem:liminf.13}
        \begin{aligned} 
            \liminf_{\varepsilon'\to 0^+}{\big\{\tfrac{\varepsilon'}{2}\|\nabla v_{\varepsilon'}\|_{\widetilde{\Sigma}_{I}^{\varepsilon',\ell}}^2\big\}}&\ge   \liminf_{\varepsilon'\to 0^+}{\big\{\tfrac{1}{2}\|(\widetilde{\mathtt{d}}\circ(\phi_{\varepsilon'}^{\ell})^{-1})^{-\smash{\frac{1}{2}}}v_{\varepsilon'}\chi_{\Gamma_{I}^{\varepsilon',\ell}}\|_{\Gamma_{I}^{\ell}}^2\big\}}
            \\&= \tfrac{1}{2} \|((k\cdot n)\mathtt{d})^{-\smash{\frac{1}{2}}}v\|_{\Gamma_I^{\ell}}^2
            \,.
            \end{aligned}
        \end{align}
        Eventually, taking into account \eqref{lem:liminf.11}, from \eqref{lem:liminf.13} and $\Gamma_I=\bigcup_{\ell=1}^L{\Gamma_I^{\ell}}$, it follows that
        \begin{align}\label{lem:liminf.14}
        \begin{aligned} 
            \liminf_{\varepsilon'\to 0^+}{\big\{\tfrac{\varepsilon'}{2}\|\nabla v_{\varepsilon'}\|_{\Sigma_{I}^{\smash{\varepsilon'}}}^2\big\}}&\ge 
            \liminf_{\varepsilon'\to 0^+}{\big\{\tfrac{\varepsilon'}{2}\|\nabla v_{\varepsilon'}\|_{\widetilde{\Sigma}_{I}^{\smash{\varepsilon'}}}^2\big\}}
            \\[-0.5mm]&\ge \sum_{\ell=1}^L{\liminf_{\varepsilon'\to 0^+}{\big\{\tfrac{\varepsilon'}{2}\|\nabla v_{\varepsilon'}\|_{\widetilde{\Sigma}_{I}^{\varepsilon',\ell}}^2\big\}}}
            \\[-0.5mm]& =\sum_{\ell=1}^L{\tfrac{1}{2} \|((k\cdot n)\mathtt{d})^{-\smash{\frac{1}{2}}}v\|_{\Gamma_I^{\ell}}^2}
            \\&=\tfrac{1}{2} \|((k\cdot n)\mathtt{d})^{-\smash{\frac{1}{2}}}v\|_{\Gamma_I}^2\,.
            \end{aligned}
        \end{align}
        In summary, from \eqref{lem:liminf.4} and \eqref{lem:liminf.14}, we conclude that
        the claimed $\liminf$-estimate~applies.\enlargethispage{12mm}
    \end{proof}\vspace{-2.5mm}

    \begin{figure}[H]
        
\centering
  
\tikzset {_efbeu1xy7/.code = {\pgfsetadditionalshadetransform{ \pgftransformshift{\pgfpoint{0 bp } { 0 bp }  }  \pgftransformrotate{-90 }  \pgftransformscale{2 }  }}}
\pgfdeclarehorizontalshading{_1baele0ej}{150bp}{rgb(0bp)=(0.89,0.89,0.89);
rgb(37.5bp)=(0.89,0.89,0.89);
rgb(37.5bp)=(0.86,0.86,0.86);
rgb(37.5bp)=(0.82,0.82,0.82);
rgb(62.5bp)=(1,1,1);
rgb(100bp)=(1,1,1)}
\tikzset{every picture/.style={line width=0.75pt}} 


\caption{Sketch \hspace{-0.1mm}of \hspace{-0.1mm}the \hspace{-0.1mm}construction \hspace{-0.1mm}in \hspace{-0.1mm}the \hspace{-0.1mm}proof \hspace{-0.1mm}of \hspace{-0.1mm}Lemma \hspace{-0.1mm}\ref{lem:liminf}:
\hspace{-0.1mm}\textit{(a)}  \hspace{-0.1mm}parametrizations~\hspace{-0.1mm}${\phi_{\varepsilon'}^{\ell}\colon \hspace{-0.175em}\Gamma_I^{\ell}\hspace{-0.175em}\to\hspace{-0.175em} \mathbb{R}^d}$, $\ell=1,\ldots,L$, (\textit{cf}.\ \eqref{lem:liminf.5}); \textit{(b)}
 boundary parts $\Gamma_I^{\varepsilon',\ell}\coloneqq \Gamma_I^{\ell}\cap \phi_{\varepsilon}^{\ell}(\Gamma_I^{\ell}) $, $\ell=1,\ldots,L$; \textit{(c)}  thin {insulating} layer $\widetilde{\Sigma}_{I}^{\varepsilon'}$  (\textit{cf}.\ \eqref{def:sigma_tilde});  \textit{(d)}  interacting boundary parts $\widetilde{\Gamma}_I^{\varepsilon',\ell}$, $\ell=1,\ldots,L$, (\textit{cf}.\ \eqref{def:gamma_tilde}).}
 \label{fig:construction}
    \end{figure}
    \newpage

    Let us continue by proving the $\limsup$-estimate, which does not require piece-wise flatness of the {insulated} boundary $\Gamma_I$.

    \begin{lemma}[$\limsup$-estimate]\label{lem:limsup}
        If $\mathtt{d}\hspace{-0.1em}\in\hspace{-0.1em} C^{0,1}(\Gamma_I)$ with $\mathtt{d}\hspace{-0.1em}\ge\hspace{-0.1em} \mathtt{d}_{\textup{min}}$, for some $\mathtt{d}_{\textup{min}}\hspace{-0.1em}>\hspace{-0.1em}0$,~then~for~every $v\hspace{-0.1em}\in\hspace{-0.1em} L^2(\mathbb{R}^d)$, there exists a sequence $\smash{(v_\varepsilon)_{\varepsilon\in (0,\varepsilon_0)}}\hspace{-0.1em}\subseteq\hspace{-0.1em} L^2(\mathbb{R}^d)$ such that $v_\varepsilon\hspace{-0.1em}\to\hspace{-0.1em} v$ in $L^2(\mathbb{R}^d)$ $(\varepsilon\hspace{-0.1em}\to\hspace{-0.1em} 0^{+})$~and
        \begin{align*}
            \limsup_{\smash{\varepsilon\to 0^+}}{\smash{\overline{E}}_\varepsilon^{\mathtt{d}}(v_\varepsilon)}\leq \overline{E}(v)\,.
        \end{align*}
    \end{lemma}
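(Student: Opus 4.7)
The plan is as follows. First, if $\overline{E}^h(v) = +\infty$, the estimate is trivial: take $v_\varepsilon \equiv v$. Otherwise, $v \in H^1(\Omega)$ with $v|_{\Gamma_D} = u_D$ and $\overline{E}^h(v)$ is finite. Fix a Stein-type extension $Ev \in H^1(\mathbb{R}^d)$ of $v$ (available since $\Omega$ is Lipschitz) and denote by $\pi_\varepsilon\colon \Sigma_{I}^{\varepsilon} \to \Gamma_I$ the base-point map $s + tk(s) \mapsto s$, which is Lipschitz continuous by the bi-Lipschitz character of $\Phi_\varepsilon$. Using the transversal distance function $\psi_\varepsilon$ from Lemma \ref{lem:transversal_distance_function}, define
\begin{align*}
v_\varepsilon(x) \coloneqq \begin{cases} v(x) & \text{if } x \in \Omega, \\ \big(1 - \tfrac{\psi_\varepsilon(x)}{\varepsilon\, h(\pi_\varepsilon(x))}\big)\, Ev(x) & \text{if } x \in \Sigma_{I}^{\varepsilon},\\ 0 & \text{otherwise.} \end{cases}
\end{align*}
By construction, $v_\varepsilon \in H^1(\Omega_{I}^{\varepsilon})$, $v_\varepsilon = 0$ a.e.\ on $\Gamma_{I}^{\varepsilon}$, $v_\varepsilon = u_D$ a.e.\ on $\Gamma_D$, and $v_\varepsilon$ is continuous across $\Gamma_I$.

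The $L^2(\mathbb{R}^d)$-convergence $v_\varepsilon \to v$ follows from the point-wise bound $|v_\varepsilon| \leq |Ev|$ on $\Sigma_{I}^{\varepsilon}$ together with $|\Sigma_{I}^{\varepsilon}| \to 0$ as $\varepsilon \to 0^+$ (\textit{cf}.\ \eqref{eq:measure_convergence}). For the energy convergence, the bulk contributions on $\Omega$ are constant in $\varepsilon$ because $v_\varepsilon|_\Omega = v$. The critical task is to analyze $\tfrac{\varepsilon}{2}\|\nabla v_\varepsilon\|^2_{\Sigma_{I}^{\varepsilon}}$. Applying the chain rule yields
\begin{align*}
\nabla v_\varepsilon = \big(1 - \tfrac{\psi_\varepsilon}{\varepsilon\, h\circ \pi_\varepsilon}\big)\nabla Ev - \tfrac{Ev}{\varepsilon\, h\circ \pi_\varepsilon}\, \nabla \psi_\varepsilon + \tfrac{(Ev)\,\psi_\varepsilon}{\varepsilon\,(h\circ \pi_\varepsilon)^2} \,\nabla(h\circ \pi_\varepsilon).
\end{align*}
The first term contributes at most $\varepsilon\|\nabla Ev\|^2_{\Sigma_{I}^{\varepsilon}} \to 0$ by dominated convergence. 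The third term is bounded uniformly in $\varepsilon$ on $\Sigma_{I}^{\varepsilon}$ (since $\psi_\varepsilon = O(\varepsilon)$, $h \geq h_{\min} > 0$, $h$ is Lipschitz, and $\pi_\varepsilon$ is Lipschitz uniformly in $\varepsilon$), so its $\varepsilon$-weighted squared norm vanishes.

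The dominant second term, combined with Lemma \ref{lem:transversal_distance_function}'s representation $\nabla \psi_\varepsilon = n(\pi_\varepsilon)/(k\cdot n)(\pi_\varepsilon) + \psi_\varepsilon R_\varepsilon$ (which yields $|\nabla \psi_\varepsilon|^2 = 1/((k\cdot n)(\pi_\varepsilon))^2 + O(\psi_\varepsilon)$), gives, after absorbing cross terms via the Cauchy--Schwarz inequality,
\begin{align*}
\tfrac{\varepsilon}{2}\|\nabla v_\varepsilon\|^2_{\Sigma_{I}^{\varepsilon}} = \tfrac{1}{2\varepsilon}\int_{\Sigma_{I}^{\varepsilon}} \tfrac{|Ev|^2}{(h\circ \pi_\varepsilon)^2 \,((k\cdot n)\circ \pi_\varepsilon)^2}\,\mathrm{d}x + o(1).
\end{align*}
The Lebesgue differentiation theorem for vanishing boundary layers (Lemma \ref{eq:Lebesgue_boundary_limit}), applied with $p=2$ and $a = 1/(h^2\,(k\cdot n)^2) \in L^\infty(\Gamma_I)$, converts the integral on the right to $\|((k\cdot n)h)^{-1/2} v\|^2_{\Gamma_I}$, which is precisely the interface energy in $\overline{E}^h$.

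The main obstacle is the careful bookkeeping of all cross terms and lower-order remainders in $|\nabla v_\varepsilon|^2$ before the factor of $\varepsilon$ in the energy, ensuring each is truly $o(\varepsilon^{-1})$. The uniform bound $\sup_{\varepsilon\in(0,\varepsilon_0)}\|R_\varepsilon\|_{\infty,\Sigma_{I}^{\varepsilon}} < +\infty$ from Lemmas \ref{lem:approx_trans_formula} and \ref{lem:transversal_distance_function}, together with the assumption $h \geq h_{\min} > 0$, is what makes these controls uniform in $\varepsilon$. We emphasize that the piece-wise flatness assumption on $\Gamma_I$, essential in the $\liminf$-estimate, is \emph{not} used here: only Lipschitz regularity of $\Omega$ (to construct the transversal vector field $k$ and the extension $Ev$) is required.
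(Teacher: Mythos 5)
Your proof follows essentially the same strategy as the paper's: the identical cutoff function $\varphi_\varepsilon = 1 - \psi_\varepsilon/(\varepsilon\, h\circ\pi_\varepsilon)$ built from the transversal distance function of Lemma~\ref{lem:transversal_distance_function}, extension of $v$ to $H^1(\mathbb{R}^d)$ via the Sobolev extension operator, a gradient expansion on the thin layer, and Lemma~\ref{eq:Lebesgue_boundary_limit} with $p=2$ and $a = ((k\cdot n)h)^{-2}$ to pass the dominant term to the boundary. The only substantive stylistic difference is that you expand $|\nabla v_\varepsilon|^2$ directly and dispose of the cross terms by Cauchy--Schwarz, whereas the paper introduces a convexity parameter $\lambda\in(0,1)$ (see \eqref{lem:limsup.4}--\eqref{lem:limsup.6}) and sends $\lambda\to 1^-$ at the end; both are correct and of comparable length, and your version avoids carrying an extra parameter through the computation. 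Your explicit use of the base-point map $\pi_\varepsilon$ also makes clean a notational abbreviation the paper leaves implicit.

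There is one small error in the construction. You set $v_\varepsilon \coloneqq 0$ on $\mathbb{R}^d\setminus\Omega_{I}^{\varepsilon}$, but the recovery sequence must converge to the \emph{given} $v\in L^2(\mathbb{R}^d)$, whose values on $\mathbb{R}^d\setminus\overline{\Omega}$ are arbitrary. With your choice,
$\|v_\varepsilon - v\|_{L^2(\mathbb{R}^d\setminus\Omega_{I}^{\varepsilon})} = \|v\|_{L^2(\mathbb{R}^d\setminus\Omega_{I}^{\varepsilon})}$,
which does not vanish as $\varepsilon\to 0^+$ unless $v=0$ a.e.\ off $\overline{\Omega}$. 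The fix is immediate: set $v_\varepsilon \coloneqq v$ on $\mathbb{R}^d\setminus\Omega_{I}^{\varepsilon}$, as the paper does in \eqref{lem:limsup.3}. The energy computation is untouched, since $\smash{\overline{E}}_\varepsilon^h$ depends only on $v_\varepsilon|_{\Omega_{I}^{\varepsilon}}$.
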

    
    \begin{proof} 
        Let $v\in L^2(\mathbb{R}^d)$ be arbitrary. Without loss of generality, we may assume that  $v\in H^1(\Omega)$ {with $v=u_D$ a.e.\ on  $\Gamma_D$}. Otherwise, we choose $v_\varepsilon\hspace{-0.1em}=\hspace{-0.1em}v\hspace{-0.1em}\in\hspace{-0.1em} L^2(\mathbb{R}^d)$ for all $\varepsilon\hspace{-0.1em}\in\hspace{-0.1em} (0,\varepsilon_0)$, which~implies~that $\smash{\overline{E}}_\varepsilon^{\mathtt{d}}(v_\varepsilon)=+\infty=\smash{\overline{E}}^{\mathtt{d}}(v)$ for all $\varepsilon\in (0,\varepsilon_0)$. Then, if, for every $x\in \overline{\Omega}_{I}^{\varepsilon}$, we define\enlargethispage{8.5mm} 
        \begin{align}\label{lem:limsup.1}
            \varphi_\varepsilon(x)\coloneqq 
            \begin{cases}
                1-\frac{\psi_\varepsilon(x)}{\varepsilon \mathtt{d}(x)}&\text{ if }x\in \smash{\overline{\Sigma_{I}^{\varepsilon}}}\,,\\
                1&\text{ if }x\in \overline{\Omega}\,,
            \end{cases}
        \end{align}
        where $\psi_\varepsilon\in \smash{H^{1,\infty}(\Sigma_{I}^{\varepsilon})}$ is the function from Lemma \ref{lem:transversal_distance_function}, 
        we have that $\varphi_\varepsilon\in   \smash{H^{1,\infty}(\Omega_{I}^{\varepsilon})}$ with
        \begin{subequations}\label{lem:limsup.2}
        \begin{alignat}{2}
            0\leq \varphi_\varepsilon&\leq 1 &&\quad\text{ in }\overline{\Omega}_{I}^{\varepsilon}\,,\label{lem:limsup.2.1}\\[-0.25mm]
            \varphi_\varepsilon&=1&&\quad\text{ in }\overline{\Omega}\,,\label{lem:limsup.2.2}\\[-0.25mm]
            \varphi_\varepsilon&=0 &&\quad\text{ on }\Gamma_{I}^{\varepsilon}\,.\label{lem:limsup.2.3}
           \\[-0.25mm]   \nabla \varphi_\varepsilon &= \smash{\tfrac{\psi_\varepsilon}{\varepsilon \mathtt{d}^2}\nabla \mathtt{d} -\tfrac{1}{\varepsilon \mathtt{d}}\nabla\psi_\varepsilon+R_\varepsilon}&&\quad\text{ a.e.\ in }\Sigma_{I}^{\varepsilon}\,,\label{lem:limsup.2.4}
        \end{alignat}
        \end{subequations}
        where \hspace{-0.1mm}$R_\varepsilon\hspace{-0.175em}\in\hspace{-0.175em} (L^\infty(\Sigma_{I}^{\varepsilon}))^d$, \hspace{-0.1mm}$\varepsilon\hspace{-0.175em}\in \hspace{-0.175em}(0,\varepsilon_0)$, \hspace{-0.1mm}are \hspace{-0.1mm}as \hspace{-0.1mm}in \hspace{-0.1mm}Lemma \hspace{-0.1mm}\ref{lem:transversal_distance_function}.
        \hspace{-0.1mm}By \hspace{-0.1mm}the \hspace{-0.1mm}Lipschitz \hspace{-0.1mm}regularity~\hspace{-0.1mm}of~\hspace{-0.1mm}the~\hspace{-0.1mm}domain~\hspace{-0.1mm}$\Omega$,  
        using the Sobolev extension theorem (\textit{cf}.~\cite[Thm.\ II.3.3]{Galdi}), 
        we can extend $v\in H^1(\Omega)$~to~$\mathbb{R}^d$,~\textit{i.e.}, we may assume that $v\hspace{-0.1em}\in\hspace{-0.1em} H^1(\mathbb{R}^d)$. Then, for every $\varepsilon\hspace{-0.1em}\in \hspace{-0.1em}(0,\varepsilon_0)$, we consider the function~$ v_\varepsilon\hspace{-0.1em}\in\hspace{-0.1em} L^2(\mathbb{R}^d)$, defined by $v_\varepsilon\coloneqq v\varphi_\varepsilon$ a.e.\ in $\Omega_{I}^{\varepsilon}$ and $v_\varepsilon\coloneqq v$ a.e.\ in $\mathbb{R}^d\setminus\Omega_{I}^{\varepsilon} $, which satisfies 
       $v_\varepsilon|_{\Omega_{I}^{\varepsilon}} \in H^1(\Omega_{I}^{\varepsilon})$, and, by \eqref{lem:limsup.2.2} and \eqref{lem:limsup.2.3}, respectively, that
        \begin{subequations}\label{lem:limsup.3}
        \begin{alignat}{2}\label{lem:limsup.3.1}
            v_\varepsilon&=v&&\quad\text{ a.e.\ in }\mathbb{R}^d\setminus \Sigma_{I}^{\varepsilon} \,,\\[-0.25mm]
            v_\varepsilon&=u_D&&\quad\text{ a.e.\ on }\smash{\Gamma_D} \,,\label{lem:limsup.3.2.0}\\[-0.25mm]
            v_\varepsilon&=0&&\quad\text{ a.e.\ on }\smash{\Gamma_I^\varepsilon} \,.\label{lem:limsup.3.2}
        \end{alignat}
        \end{subequations}
        In particular, we have that
        \begin{align}
            v_\varepsilon\to v\quad \text{ in }L^2(\mathbb{R}^d)\quad (\varepsilon\to 0^+)\,.\label{lem:limsup.3.3}
        \end{align}
        Moreover, exploiting \eqref{lem:limsup.3.1} and the convexity of  $(t\mapsto \smash{{t^2}})\colon\mathbb{R}\to \mathbb{R}$, for every $\lambda\in (0,1)$,~we~obtain
        \begin{align}\label{lem:limsup.4}
            \begin{aligned} 
            \smash{E}_\varepsilon^{\mathtt{d}}(v_\varepsilon)&= \tfrac{1}{2}\|\nabla v\|_{\Omega}^2-(f,v)_{\Omega}-\langle g,v\rangle_{\smash{H^{\smash{\frac{1}{2}}}(\Gamma_N)}}+\tfrac{\varepsilon}{2}\|\nabla 
            v_\varepsilon\|_{\Sigma_{I}^{\varepsilon}}^2 
            \\&\leq 
             \tfrac{1}{2}\|\nabla v\|_{\Omega}^2-(f,v)_{\Omega}-\langle g,v\rangle_{\smash{H^{\smash{\frac{1}{2}}}(\Gamma_N)}}+\smash{\tfrac{\varepsilon \lambda}{2}\|\tfrac{1}{\lambda}v\nabla \varphi_\varepsilon\|_{\Sigma_{I}^{\varepsilon}}^2 }+\smash{\tfrac{\varepsilon (1-\lambda)}{2}\|\tfrac{1}{1-\lambda}\varphi_\varepsilon \nabla v\|_{\Sigma_{I}^{\varepsilon}}^2}  \,,
            \end{aligned}
        \end{align}
        where, for every $\lambda\hspace{-0.1em}\in\hspace{-0.1em} (0,1)$, due to Lemma \ref{lem:transversal_distance_function}\eqref{lem:transversal_distance_function.1},\eqref{lem:transversal_distance_function.2} and the~convexity~of~${(t\hspace{-0.1em}\mapsto \hspace{-0.1em}\smash{{t^2}})\colon\hspace{-0.1em}\mathbb{R}\hspace{-0.1em}\to\hspace{-0.1em} \mathbb{R}}$, we infer that 
        \begin{align}\label{lem:limsup.5}
        \begin{aligned}
          \tfrac{\lambda \varepsilon }{2}\|\tfrac{1}{\lambda}v\nabla \varphi_\varepsilon\|_{\Sigma_{I}^{\varepsilon}}^2&=\tfrac{\varepsilon}{2\lambda}
          \|\psi_\varepsilon\tfrac{v}{\varepsilon \mathtt{d}^2}\nabla \mathtt{d}  +(1-\psi_\varepsilon)\tfrac{-v}{(1-\psi_\varepsilon)\varepsilon \mathtt{d}}\nabla \psi_\varepsilon\|_{\Sigma_{I}^{\varepsilon}}^2
          \\&\leq \tfrac{\varepsilon}{2 \lambda}\big\{\|\psi_\varepsilon\|_{\infty,\Sigma_{I}^{\varepsilon}}
          \|\tfrac{v }{\varepsilon \mathtt{d}^2} \nabla \mathtt{d} \|_{\Sigma_{I}^{\varepsilon}}^2+
          \|\tfrac{1}{1-\psi_\varepsilon}\|_{\infty,\Sigma_{I}^{\varepsilon}}
         \|\tfrac{v }{\varepsilon \mathtt{d} }\nabla \psi_\varepsilon\|_{\Sigma_{I}^{\varepsilon}}^2\big\}
         \\&\leq \tfrac{1}{2\varepsilon\lambda}\big\{
         \varepsilon 
         \|\mathtt{d}\|_{\infty,\Gamma_I}
          \|\tfrac{v }{\mathtt{d}^2} \nabla \mathtt{d} \|_{\Sigma_{I}^{\varepsilon}}^2+
         \tfrac{1}{1-\varepsilon 
         \|\mathtt{d}\|_{\infty,\Gamma_I}}  \big\{
         \|\tfrac{v}{(k\cdot n)\mathtt{d}}\|_{\Sigma_{I}^{\varepsilon}}+\varepsilon\|R_\varepsilon\|_{D_{I}^{\varepsilon}}\big\}^2\big\}\,.
         \end{aligned}
        \end{align}
        Then, using the Lebesgue differentiation theorem (\textit{cf}.\ Lemma \ref{eq:Lebesgue_boundary_limit}) from \eqref{lem:limsup.5},  we find that
        \begin{align}\label{lem:limsup.6}
        \begin{aligned}
            \limsup_{\smash{\varepsilon\to 0^+}}{\big\{\tfrac{\varepsilon }{2}\|\nabla v_\varepsilon\|_{\Sigma_{I}^{\varepsilon}}^2\big\}}&\leq \tfrac{1}{2\lambda}\lim_{\varepsilon\to 0^+}{\big\{\tfrac{1}{\varepsilon}\|\tfrac{v}{(k\cdot n)\mathtt{d}}\|_{\Sigma_{I}^{\varepsilon}}^2\big\}}
            \\&=\tfrac{1}{2\lambda}\|\smash{((k\cdot n)\mathtt{d})^{\smash{-\frac{1}{2}}}}v\|_{\Gamma_I}^2
            \to \tfrac{1}{2}\|\smash{((k\cdot n)\mathtt{d})^{\smash{-\frac{1}{2}}}}v\|_{\Gamma_I}^2\quad (\lambda\to 1^-)\,.
        \end{aligned}
        \end{align}
        Eventually, using \eqref{lem:limsup.6} in \eqref{lem:limsup.4}, we conclude that 
        \begin{align*}
            \limsup_{\smash{\varepsilon\to 0^+}}{\smash{E}_\varepsilon^{\mathtt{d}}(v_\varepsilon)}\leq \smash{E}^{\mathtt{d}}(v)\,,
        \end{align*}
        which together with \eqref{lem:limsup.3.3} yields the claimed $\limsup$-estimate.  
    \end{proof}

    \begin{remark}[(non)necessity of piece-wise flatness of $\Gamma_I$]
        \begin{itemize}[noitemsep,topsep=2pt,leftmargin=!,labelwidth=\widthof{(ii)}]
            \item[(i)] The piece-wise flatness of the insulated boundary $\Gamma_I$ should not be necessary for the validity of Theorem \ref{thm:main}. This assumption is only needed in the proof of the $\liminf$-estimate (\textit{cf}.\ Lemma \ref{lem:liminf}) and we expect that this proof can be adapted to cover the case where insulated boundary $\Gamma_I$ is piece-wise $C^{1,1}$. 
            
             
             \item[(ii)] The piece-wise flatness of the insulated boundary $\Gamma_I$ is not restrictive, \textit{e.g.}, in numerical simulations, where typically bounded polyhedral Lipschitz domains are considered. For a numerical study of the limit problem defined via \eqref{eq:Eh} resulting from the findings of this paper, we refer to \cite{AKKInsulationNumerics}.\enlargethispage{5mm}
        \end{itemize}
    \end{remark}

	{\setlength{\bibsep}{0pt plus 0.0ex}\small
		
		\bibliographystyle{aomplain}
		\bibliography{references}
		
	}
	
\end{document}